\documentclass[reqno,11pt]{amsart}
\usepackage{amsmath,amssymb,latexsym,esint,cite,mathrsfs, tikz}
\usepackage{verbatim,wasysym}
\usepackage[left=3cm,right=3cm,top=2.7cm,bottom=2.7cm]{geometry}
\usetikzlibrary{decorations.pathreplacing, calligraphy}

\usepackage{microtype}
\usepackage{color,enumitem,graphicx}
\usepackage[colorlinks=true,urlcolor=blue, citecolor=red,linkcolor=blue,
linktocpage,pdfpagelabels, bookmarksnumbered,bookmarksopen]{hyperref}
\usepackage[hyperpageref]{backref}
\usepackage[english]{babel}

\usepackage[colorinlistoftodos]{todonotes}

\newtheorem{theorem}{Theorem}[section]
\newtheorem{lemma}[theorem]{Lemma}
\newtheorem{e-proposition}[theorem]{Proposition}
\newtheorem{corollary}[theorem]{Corollary}

\newtheorem{proposition}[theorem]{Proposition}
\newtheorem{definition}[theorem]{Definition}
\newtheorem{remark}[theorem]{Remark}
\newtheorem{ex}[theorem]{Example}




\newcommand{\R}{\mathbb{R}}

\renewcommand{\S}{\mathbb{S}}

\newcommand{\eps}{\varepsilon}
\newcommand{\beq}{\begin{equation}}
\newcommand{\eeq}{\end{equation}}

\DeclareMathOperator{\Div}{div}

              %
              %
              %
              %
            %
          %
          %
         %
         %
         %
         %
\DeclareMathOperator*{\esssup}{ess ~sup}         %

\newcommand{\RN}{\mathbb{R}^N}

\newcommand{\Om}{\Omega}

\renewcommand{\l}{\left}
\renewcommand{\r}{\right}

\def\Xint#1{\mathchoice
{\XXint\displaystyle\textstyle{#1}}%
{\XXint\textstyle\scriptstyle{#1}}%
{\XXint\scriptstyle\scriptscriptstyle{#1}}%
{\XXint\scriptscriptstyle\scriptscriptstyle{#1}}%
\!\int}
\def\XXint#1#2#3{{\setbox0=\hbox{$#1{#2#3}{\int}$ }
\vcenter{\hbox{$#2#3$ }}\kern-.6\wd0}}

\def\intmed{\Xint-}

 \DeclareMathOperator*{\essliminf}{ess\,lim\,inf}
\DeclareMathOperator*{\esslimsup}{ess\,lim\,sup}

\numberwithin{equation}{section}

\title[Lipschitz regularity for solutions of a general class of elliptic equations]{Lipschitz regularity for solutions of a general class of elliptic equations}

\author[G.\ Marino]{Greta Marino}
\author[S. \ Mosconi]{Sunra Mosconi}

\address[G.\ Marino]{Institut f\"ur Mathematik
\newline\indent
Universit\"{a}t Augsburg 
\newline\indent 
Universit\"{a}tsstra\ss e 12a, 86159 Augsburg, Germany}
\email{greta.marino@uni-a.de}

\address[S.\ Mosconi]{Department of Mathematics and Computer Science
	\newline\indent
	University of Catania
	\newline\indent
	Viale A. Doria 6, I-95125 Catania, Italy}
\email{sunra.mosconi@unict.it}

\subjclass[2010]{30C65, 35B65, 35J60}
\keywords{Elliptic regularity, Local minimisers, Quasiconformal maps, Calculus of Variations}

\begin{document}

\begin{abstract}
We prove local Lipschitz regularity for local minimisers of 
\[
W^{1,1}(\Omega)\ni v\mapsto \int_\Omega F(Dv)\, dx
\]
where $\Omega\subseteq \R^N$, $N\ge 2$ and $F:\R^N\to \R$ is a quasiuniformly convex integrand in the sense of Kovalev and Maldonado \cite{KoMa}, i.\,e.\,a convex $C^1$-function such that the ratio between the maximum and minimum eigenvalues of $D^2F$ is essentially bounded. This class of integrands includes the standard singular/degenerate functions $F(z)=|z|^p$ for any $p>1$ and arises naturally as the closure, with respect to a natural convergence, of the  strongly elliptic  integrands of the Calculus of Variations.
  \end{abstract}

\maketitle

	\begin{center}
		\begin{minipage}{9cm}
			\small
			\tableofcontents
		\end{minipage}
	\end{center}
 
 \section{Introduction}
 \subsection{Overview and main result}
Consider a convex and coercive  $F:\R^N\to \R$. The aim of this paper is to prove Lipschitz regularity of the local minimisers of the standard integral of Calculus of Variations
\beq
\label{i0j}
J(u, \Omega)=\int_\Omega F(Du)\, dx
\eeq
i.\,e.\,of those $u\in W^{1,1}_{\rm loc}(\Omega)$ such that $F(Du)\in L^1_{\rm loc}(\Omega)$ and for any open ${\mathcal O}\Subset\Omega$ it holds
\[
J(u, {\mathcal O})\le J(u+w, {\mathcal O})\qquad \forall \, w\in W^{1,1}_0({\mathcal O}).
\]
The integrands $F$ we are interested in satisfy for some $H<\infty$
\beq
\label{i01}
 \lambda_{\rm max} (D^2F(z))\le H\, \lambda_{\rm min}(D^2F(z))\qquad \text{for a.\,e.\,$z\in \R^N$},
\eeq
where here and in the following $\lambda_{\rm max}(M)$ and $\lambda_{\rm min}(M)$ denote respectively the minimum and maximum eigenvalues of the symmetric matrix $M$. In this introductory paragraph, the integrands obeying \eqref{i01} will be called {\em uniformly elliptic} (even if the term is ubiquitous and thus may cause some confusion). As will be clarified later, most of  the available Lipschitz regularity results for  local minimisers prescribe that the singular set, where $\lambda_{\rm max}(D^2F(z))$ blows up, and the degeneracy set, where  $\lambda_{\rm min}(D^2F(z))$ vanishes, are both bounded. As will be clarified in the following discussion, condition \eqref{i01} alone instead allows for both sets to  be  simultaneously  unbounded (even dense) and sizeable  (in the sense of Hausdorff dimension).

In order to motivate the uniform ellipticity condition \eqref{i01}, let us review some well known facts regarding local minimisers of \eqref{i0j}.
If $F$ fulfils a polynomial upper bound, these are finite energy solutions of the corresponding Euler-Lagrange equation
\beq
\label{i00}
{\rm div}\, (DF(Du))=0
\eeq
in $\Omega$.
Existence of minimisers is ensured by standard methods once a superlinearity condition on $F$ at $\infty$
is imposed, however well known examples (see \cite{Gi, Ma0}) show that under these s\^ole conditions (sub-polynomial growth and superlinearity) they may fail to be locally bounded. Regularity of local minimisers is a classical topic dating back to Hilbert's XIX problem, solved through Schauder and DeGiorgi-Nash-Moser theories under the {\em strong ellipticity} assumption
\[
0<\inf_{z\in \R^N} \lambda_{\rm min}(D^2F(z))\le \sup_{z\in \R^N} \lambda_{\rm max}(D^2F(z))<\infty.
\]

Since the solution of Hilbert's problem, much effort has been dedicated to weaken this assumption. A natural quantity arising in the aforementioned regularity theories is the {\em ellipticity ratio} (also called  the {\em linear dilatation} of $D^2F(z)$), namely
\[
{\rm e}(z)=\frac{\lambda_{\rm max}(D^2F(z))}{\lambda_{\rm min}(D^2F(z))}.
\]

The closure of smooth, strongly elliptic integrands  with respect to pointwise (or, equivalently,  $C^1_{\rm loc}(\R^N)$) convergence, when coupled with a uniform bound on the ellipticity ratio, turns out to be a cone consisting of
\begin{itemize}
\item
Affine functions, which are the extremals of the cone
\item
Convex superlinear $F\in C^1_{\rm loc}(\R^N)\cap W^{2, N}_{\rm loc}(\R^N)$ obeying \eqref{i01} a.\,e.\, for some $H<\infty$
\end{itemize}
(see Corollary \ref{cclosure}).
The second type of integrands have been studied in the seminal papers \cite{K, KoMa} (to which we refer for further details) and are called $H$-{\em quasiuniformly convex},\footnote{Actually, in \cite{K, KoMa} emphasis is given to the geometric quasiconformality constant of the gradient map, rather than on its linear dilatation $H$ defined in \eqref{i01}}  henceforth abbreviated by $H$-q.\,u.\,c.. They are related to the well developed theory of quasiconformal maps, as their gradient mapping is indeed quasiconformal. 
It turns out, quite conveniently we may say, that in dimension $N\ge 2$ condition \eqref{i01} implies not only superlinearity, but also  sub-polynomial growth and strict convexity of the integrand $F$, granting existence of local minimisers (or actually, minimisers under various boundary conditions) in $W^{1, 1+1/H}_{\rm loc}(\Omega)$. Moreover, local minimisers and finite energy weak solutions of \eqref{i00} coincide (see Proposition \ref{minimi}). Equation \eqref{i00} can be formally derived with respect to each variable $x_\alpha$, giving for each partial derivative $\partial_\alpha u$
\beq
\label{i06}
{\rm div}\, (D^2F(Du)\, D\partial_\alpha u)=0,\qquad \alpha=1,\dots, N,
\eeq
which fails to be strongly elliptic for $Du$ belonging to ${\rm Sing}_F\cup {\rm Deg}_F$, where
 \beq
\label{sing-deg}
 \begin{split}
{\rm Sing}_F&=\left\{\bar z\in \R^N: \esslimsup_{z\to \bar z}\lambda_{\rm max}(D^2F(z))=\infty\right\},\\
{\rm Deg}_F&=\left\{\bar z\in \R^N: \essliminf_{z\to \bar z}\lambda_{\rm min}(D^2F(z))=0\right\}.
\end{split}
\eeq
If $F$ is q.\,u.\,c., the previous sets are related to the so-called quasiconformal $\infty$- and $0$- sets respectively, appearing in the quasiconformal Jacobian problem \cite{BHS}. These can be quite large, as shown in \cite[Section 4 and 5]{KoMa}: given an arbitrary  $E\subseteq \R^N$ of Hausdorff dimension less than $1$, there exist a  q.\,u.\,c.\,$F_1$ such that $E\subseteq {\rm Sing}_{F_1}$ and a q.\,u.\,c.\,$F_2$ such that $E\subseteq {\rm Deg}_{F_2}$. In particular, both ${\rm Sing}_F$ and ${\rm Deg}_F$ can be dense in $\R^N$ for $N\ge 2$. On the other hand, for $F$ q.\,u.\,c., neither ${\rm Sing}_F$ nor ${\rm Deg}_F$ can contain rectifiable curves (see \cite{BHS, KoMa}).

Despite the natural appearance of q.\,u.\,c\,integrands as limits (in the sense described above) of strongly elliptic ones, the corresponding regularity theory for \eqref{i06} seems more delicate. Indeed, the $L^\infty$ and $C^\alpha$ bounds for $Du$ when $F$ is strongly elliptic essentially depend on the quantity
\[
\frac{\sup_\Omega \lambda_{\rm max} (D^2F(Du))}{\inf_{\Omega} \lambda_{\rm min}(D^2F(Du))}
\]
rather than on the actually controlled quantity
\[
\sup_\Omega\frac{ \lambda_{\rm max} (D^2F(Du))}{\lambda_{\rm min}(D^2F(Du))}
\]
and a naive limiting argument is bound to fail.

Nevertheless, some classes of q.\,u.\,c.\,where the standard regularity theory can still produce results are already well known. An important type of q.\,u.\,c.\,integrands are for instance  those with {\em Uhlenbeck structure}, i.\,e.\,depending only on the modulus of the gradient. Given a non-decreasing $G:[0, \infty[\to [0, \infty[$, the function $F(z)=G(|z|)$ is q.\,u.\,c.\,if and only if $G\in C^1([0, \infty))\cap W^{2, \infty}_{\rm loc}(\R_+)$ and there exists a constant $C>0$ such that 
\beq
\label{i09}
\frac{1}{C}\le \frac{t\, G''(t)}{G'(t)} \le C \qquad \text{for a.\,e.\,$t>0$}.
\eeq
In particular, for a q.\,u.\,c.\,function with Uhlenbeck structure, it always holds
\beq
\label{isd}
{\rm Sing}_F\subseteq \{0\}\quad \text{as well as} \quad  {\rm Deg}_F\subseteq \{0\}
\eeq
which, since obviously ${\rm Sing}_F\cap {\rm Deg}_F=\emptyset$ for a q.\,u.\,c.\,$F$, justifies the traditional dichotomy between the singular or degenerate case of \eqref{i00}. 

The regularity theory for solutions of \eqref{i00} when $F$ is a  Uhlenbeck q.\,u.\,c.\,integrand is well developed, even when non-homogenous terms with low summability are included and no variational setting it available. See e.\,g.\,\cite{CM} and the literature therein for a survey on  the available results for solutions of
\beq
\label{icm}
{\rm div}\, \left(\frac{G'(|Du|)}{|Du|}\, Du\right)=f
\eeq 
up to the Lipschitz scale, depending on the summability properties of $f$.  Regarding higher regularity, in \cite{L} minimisers have been proved to be $C^{1,\alpha}$  for integrands which are even more general than the Uhlenbeck ones, but still fulfil a uniform {\em radial bound} on the ellipticity ratio. By this we mean that the main assumption for the $C^{1,\alpha}$ regularity is the existence of two radial functions  $\lambda_{\min}, \lambda_{\rm max}:\R_+\to \R_+$ obeying
\[
\lambda_{\rm min}(|z|)\le \lambda_{\rm min}(D^2F(z)),\qquad \lambda_{\rm max}(|z|)\ge \lambda_{\rm max} (D^2F(z))
\]
and 
\[
\sup_{t\in \R_+} \frac{\lambda_{\rm max}(t)}{\lambda_{\rm min}(t)}<\infty
\]
so that these integrands are q.\,u.\,c.. In this framework both ${\rm Sing}_F$ and ${\rm Deg}_F$ are anyway restricted to have radial symmetry, but since  neither of those  can contain circles, \eqref{isd} holds again. 

Notice that when $F$ is a q.\,u.\,c., normalised in such a way that $\min_{\R^N} F= F(0)=0$ (which can always be safely assumed), both $F$ and $DF$ actually fulfil a two-sided isotropic control of the form
\[
\frac{1}{C}\, A(|z|)\le F(z)\le C\, A(|z|), \qquad \frac{1}{C}\, A'(|z|)\le |DF(z)|\le C\, A'(|z|)
\]
for a $C^1$ Young function $A:[0, \infty[\to [0, \infty[$ (see Section \ref{3}) but, as mentioned before, no such isotropic control is available at the level of the ellipticity ratio for a general q.\,u.\,c.\,integrand.  

Our interest in this framework is motivated by the results in \cite[Chapter 16]{AIMbook} where regularity theory for the corresponding minimisers has been successfully developed in two space dimensions. More precisely, in \cite[Theorem 16.4.5]{AIMbook}, the authors prove $C^{1,\alpha}$ regularity of finite energy\footnote{In this possibly non-variational setting, the energy considered is given by the integral of $({\mathcal A}(Du), Du)$.}
solutions of 
\beq
\label{i03}
{\rm div}\, {\mathcal A}(Du)=0
\eeq
in the plane, under the even more general assumption that the mapping ${\mathcal A}:\R^2\to \R^2$ fulfils
\[
\left({\mathcal A}(z)-{\mathcal A}(w), z-w\right)\ge \delta\, |{\mathcal A}(z)-{\mathcal A}(w)|\, |z-w|, \qquad \forall \, z, w\in \R^2
\]
for some $\delta>0$. Such mappings are called {\em $\delta$-monotone} and have been introduced in \cite{K}, where it is shown that any gradient map of a quasiuniformly convex integrand is $\delta$-monotone, but notice that in general equation \eqref{i03} may be non-variational. Hence the results of \cite{AIMbook} in the plane are more general and stronger than ours, but the method developed therein is based on the reduction of \eqref{i03} to a first-order system  of Beltrami type for the complex gradient of $u$, and thus are constrained to the two-dimensional setting.

We can now state our main result.

\begin{theorem}\label{imt}
For $N\ge 2$, let  $F\in C^1_{\rm loc}(\R^N)\cap W^{2,1}_{\rm loc}(\R^N)$ be convex, obey \eqref{i01} a.\,e.\,and fulfil the normalisation
\beq
\label{inorm}
F(z)\ge F(0)=0\qquad \forall \, z\in \R^N.
\eeq
Then any local minimiser $u$  for $J$ in $\Omega$ is locally Lipschitz. In particular, there exists a constant $C=C(H, N)>0$ such that if $B_{2R}\subseteq \Omega$, then
\beq
\label{ilest}
	\sup_{B_{R}}|F(Du)|\le C\,  \intmed_{B_{2R}}F(Du)\, dx.
\eeq
\end{theorem}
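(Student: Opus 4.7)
The plan is the classical one: regularise $F$ by strongly elliptic integrands, prove a uniform Lipschitz-type estimate for the regularised minimisers and pass to the limit. The approximation is the main use of the closure property discussed in the introduction: mollify $F$ and add a small perturbation $\eps_k|z|^2/2$ to obtain $F_k\in C^\infty(\R^N)$ strongly elliptic and still satisfying \eqref{i01} with a constant depending only on $H$, with $F_k\to F$ in $C^1_{\rm loc}(\R^N)$. Let $u_k$ be the minimiser of $\int_{B_{2R}}F_k(Dv)\,dx$ with boundary datum $u_k=u$ on $\partial B_{2R}$: by Schauder theory $u_k\in C^{2,\alpha}_{\rm loc}$, it satisfies \eqref{i00} with $F_k$ in place of $F$, and (using the equivalence of minimisers with weak solutions and coercivity along the sequence) $u_k\to u$ weakly in $W^{1,1+1/H}(B_{2R})$ while $F_k(Du_k)\to F(Du)$ in $L^1(B_{2R})$.

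The core of the argument is the uniform bound
\[
\sup_{B_R}F_k(Du_k)\le C\intmed_{B_{2R}}F_k(Du_k)\,dx,\qquad C=C(H,N).
\]
Differentiating the Euler--Lagrange equation yields \eqref{i06} for each $v_\alpha:=\partial_\alpha u_k$ with coefficient matrix $A_k(x):=D^2F_k(Du_k(x))$, whose pointwise ellipticity ratio is bounded by $H$. Testing against $v_\alpha\,\Phi(F_k(Du_k))\,\phi^2$ for a non-decreasing $\Phi$ and a cut-off $\phi$, summing over $\alpha$ and integrating by parts, one arrives at an expression controlling $\int\phi^2\Phi(F_k(Du_k))\langle A_k D^2u_k,D^2u_k\rangle$ from above by a boundary-type term involving $D\phi$ and $D\Phi$. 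Applying Cauchy--Schwarz in the scalar product induced by $A_k$, together with the pointwise estimate $\lambda_{\max}(A_k)\le H\lambda_{\min}(A_k)$, one absorbs the $\lambda_{\max}$ appearing in the boundary term into the $\lambda_{\min}$ on the good side and obtains a reverse H\"older-type inequality for powers of $F_k(Du_k)$ whose constants depend on $H$ and $N$ only. A standard Moser iteration then delivers the sup bound, and passing to the limit $k\to\infty$ (using lower semicontinuity of the $L^\infty$ norm under weak-$*$ convergence) produces \eqref{ilest}.

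The main obstacle lies precisely in the Caccioppoli/Moser step. Since $\lambda_{\min}(A_k)$ and $\lambda_{\max}(A_k)$ may degenerate and blow up at dramatically different scales as $k\to\infty$, and the sets ${\rm Sing}_F$ and ${\rm Deg}_F$ can be large and anisotropic, naive $L^p$ test functions produce constants that deteriorate with the strong-ellipticity bounds of $F_k$. The design of the nonlinear weight $\Phi(F_k(Du_k))$ in the test function must be tailored to the geometry of $F$: the Young-function controls $F(z)\simeq A(|z|)$ and $|DF(z)|\simeq A'(|z|)$ recalled after \eqref{i09} have to be combined with \eqref{i01} to cancel $\lambda_{\max}$ against $\lambda_{\min}$ and leave only $H$ in the final constants. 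Handling this cancellation in a way that mirrors the role of the quasiconformal distortion in the planar results of \cite{AIMbook} but works uniformly in every dimension $N\ge 2$ is the technical heart of the argument.
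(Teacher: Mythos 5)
The approximation step is fine and essentially matches the paper's Lemma \ref{regularized}. The gap is in the core estimate, and it sits exactly where you locate "the technical heart": the claim that Cauchy--Schwarz in the scalar product induced by $A_k=D^2F_k(Du_k)$ plus the pointwise bound $\lambda_{\max}(A_k)\le H\,\lambda_{\min}(A_k)$ lets you "absorb the $\lambda_{\max}$ appearing in the boundary term into the $\lambda_{\min}$ on the good side" and arrive at a reverse H\"older inequality with constants depending only on $H,N$. This does not work. Testing the differentiated equation and applying Cauchy--Schwarz produces a Caccioppoli inequality of the schematic form $\int\phi^2\,(A_k Dw,Dw)\,dx\le C\int|D\phi|^2\,\lambda_{\max}(A_k)\,w^2\,dx$; the right-hand side carries the weight $\lambda_{\max}(A_k(x))$ but no gradient of $w$, so there is no quadratic form there against which the pointwise ratio bound can be played. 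To run Moser you would then need a Sobolev inequality with respect to the spatially varying weight $\lambda_{\min}(A_k(x))\,dx$, and the relevant constant is governed by $\sup_\Omega\lambda_{\max}(A_k)/\inf_\Omega\lambda_{\min}(A_k)$ --- a quantity that blows up along the regularisation --- not by the pointwise-controlled $\sup_\Omega\lambda_{\max}(A_k)/\lambda_{\min}(A_k)$. This is precisely the failure mode discussed in the introduction (the remark after \eqref{i3}): the pair $A=D^2F(Du)$, $G=F$ does satisfy ${\rm div}(A\,DG(Du))\ge 0$, but $A$ is only uniformly elliptic in the ratio sense, and the linear theory for such operators does not give boundedness of subsolutions. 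Your appeal to the isotropic Young-function bounds $F(z)\simeq A(|z|)$ cannot rescue this, because the degeneracy of $\lambda_{\min}(D^2F)$ is genuinely anisotropic: ${\rm Sing}_F$ and ${\rm Deg}_F$ can be dense, and no radial functions $\lambda_{\min}(|z|)$, $\lambda_{\max}(|z|)$ as in \eqref{irad} exist in general, which is exactly what the Bernstein/Moser test-function scheme requires.

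The paper circumvents this by abandoning the scalar Bernstein approach entirely. It works with the stress field $V=DF(Du)$, exploits the differential inclusion $DV\in\mathcal K_H$ (Lemma \ref{lemma51}) together with the row-wise divergence-free structure of $DV^t$ to prove the vectorial Caccioppoli inequality \eqref{cac} on superlevel sets of $F(Du)$, then converts it into scalar Caccioppoli inequalities for the Minkowski functionals $g_k(V)$ of the star-shaped sets $\{F\circ DF^{-1}\le k\}$, whose uniform geometry is controlled by the quasisymmetry of $DF$ (Section \ref{6}), and finally runs a De Giorgi iteration on these level sets. If you want to salvage your outline, you would need to replace the Moser step with an argument that never divides by $\lambda_{\min}(A_k(x))$ pointwise; the paper's level-set/stress-field mechanism is one way to do that, and no straightforward choice of weight $\Phi$ in the test function achieves the same effect.
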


\begin{remark}[Comments on the statement]
\ 
\begin{itemize}
\item
Except for the normalisation condition \eqref{inorm}, the assumptions on $F$ can be equivalently stated as $F$ being $H$-quasiuniformly convex, or $DF$ being a quasiconformal map with maximal linear dilatation bounded by $H$. Notice that, thanks to the quasiconformality of $DF$, the $W^{2,2}_{\rm loc}(\R^N)$ regularity of $F$ together with condition \eqref{i01} automatically improves to $F\in W^{2,N+\eps}_{\rm loc}(\R^N)$ for some $\eps>0$.
\item
The normalisation condition \eqref{inorm} is made only to have the cleaner estimate \eqref{ilest}, as any $H$-q.\,u.\,c.\,integrand $F$ has a unique minimum point $\bar{z}$. Then, the integrand
\[
\tilde{F}(z)=F(z+\bar z)-F(\bar z)
\]
is still $H$-q.\,u.\,c.\,and the function $\tilde{u}(x)=u(x)-(x, \bar z)$ is a local minimiser for the corresponding integral functional. Notice that \eqref{ilest}, having no additional term on the right, prescribes point-wise smallness of $Du$ for small values of its energy.
\item
Condition \eqref{i01} giving $H$-q.\,u.\,convexity does not distinguish between singular or degenerate equations, since both $z\mapsto |z|^{1+1/H}$ and $z\mapsto |z|^{1+H}$ are $H$-q.\,u.\,c.. More substantially, 
a simple construction  (see Example \ref{exsd}) shows that for a q.\,u.\,c.\,integrand $F$, both ${\rm Sing}_F$ and ${\rm Deg}_F$ can be simultaneously non-empty, so that the traditional dichotomy between singular/degenerate equation cannot possibly hold in this setting. 
\item
We could also have treated local minimisers of
\[
J(u, {\mathcal O})=\int_{\mathcal O} F(Du)+f\, u\, dx, \qquad {\mathcal O}\Subset\Omega,
\]
for sufficiently smooth $f$. However, our method seems to provide Lipschitz regularity under non-optimal regularity conditions on $f$. The natural condition ensuring boundedness of $Du$ should be $f\in L^q_{\rm loc}(\Omega)$ for some $q>N$, but it appears that substantial modifications to the proof are needed to obtain such a result.
\end{itemize}
\end{remark}

\subsection{The r\^ole of the Uhlenbeck structure}

Let us discuss briefly the r\^ole of a radial control on the ellipticity ratio in proving a-priori gradient bounds for finite energy solutions of \eqref{i00} with a convex $F$. The standard strategy to this end, which goes back to Bernstein,  
is to find a suitable coercive function $G:\R^N\to \R$ and  coefficients $a_{i j}:\R^N\to \R$ such that 
\begin{enumerate}
\item
The matrix $A(z)=(a_{ij}(z))$ has controlled ellipticity for every $z\in \R^N$;
\item
The function $G(Du)$  solves
\beq
\label{i0t}
{\rm div}\, (A(Du)\, D(G(Du)))\ge 0.
\eeq
\end{enumerate}
Once these tasks are achieved, {\em linear} regularity theory can be applied to $G(Du)$. If, for instance, $A$ turns out to be strongly elliptic, then solutions of \eqref{i0t} fulfil
\[
\|G(Du)\|_{L^\infty(B_R)}\le C\, \intmed_{B_{2R}} G(Du)\, dx
\]
 and we are reduced to control the integrand on the right  (or variants of it) by $F(Du)$. 
More generally, once suitable refinements of the linear elliptic regularity step are developed, this scheme is flexible enough to deal with choices of  $A$ which may not be strongly elliptic, as long as their ellipticity ratio does not blow up too fast for $z\to \infty$ (see e.\,g.\,\cite{BM} and the literature therein for the non-uniformly elliptic setting). 

For the sake of this discussion, we will suppose that $F$ and $u$ are smooth and that we have chosen $\lambda_{\rm min}, \lambda_{\rm max}:\R^N\to \R_+$ in such a way that for all $z\in \R^N$ it holds 
\beq
\label{irad}
0\le \lambda_{\rm min}(z)\le \lambda_{\rm min}(D^2F(z)), \qquad \lambda_{\rm max}(z)\ge \lambda_{\rm max}(D^2F(z)), \qquad \frac{\lambda_{\rm max}(z)}{\lambda_{\rm min}(z)}\le H.
\eeq
The standard approach to construct  the couple $(A, G)$ is to multiply each equation in \eqref{i06} by $\partial_{\alpha}u$ and sum, to obtain
\beq
\label{i2}
{\rm div}\, \Big(D^2F(Du)\, \sum_{\alpha=1}^N (D\partial_\alpha u) \, \partial_\alpha u\Big)=\sum_{\alpha=1}^ND^2F(Du)\, D\partial_\alpha u\, D\partial_\alpha u\ge 0
\eeq
where the last inequality follows from the convexity of $F$. With this choice, however, we are committing to a radial control on the ellipticity ratio, since
\[
\sum_{\alpha=1}^N (D\partial_\alpha u)\, \partial_\alpha u=D \frac{|Du|^2}{2}
\]
and a natural choice for $A$ is
\[
A(z)=\frac{D^2F(z)}{\lambda_{\rm min}(z)},
\]
which is strongly elliptic if the ellipticity ratio is uniformly bounded, while in the meantime \eqref{i2} reads
\[
{\rm div}\, \left(A(Du)\, \lambda_{\rm min}(Du)\, D \frac{|Du|^2}{2}\right)\ge 0.
\]
In order to construct $G$ we must impose
\beq
\label{i3}
DG(z)=\lambda_{\rm min}(z)\, D \frac{|z|^2}{2}=\lambda_{\rm min}(z) \, z
\eeq
but this relation forces $G$, and thus $\lambda_{\rm min}$, to be radial functions.
If this is so, i.\,e.\,$\lambda_{\rm min}(z)=\lambda_{\rm min}(|z|)$, we can indeed set
\[
G(z)=\int_0^{|z|}\lambda_{\rm min}(s)\, s\, ds
\]
which fulfils \eqref{i3}.
Notice finally that, since $F(0)=0$ and $DF(0)=0$, for $|\omega|=1$ and $t>0$ it holds
\[
F(t\, \omega)=\int_0^{t}\left(D^2F(s\, \omega) \, \omega, \omega\right)(t-s)\, ds\ge \int_0^{t}\lambda_{\rm min}(s)\, (t-s)\, ds\ge \int_0^{t/2}\lambda_{\rm min}(s)\, s\, ds=G(t/2)
\]
which can be used to bound $G(Du)$ in terms of the natural integrand $F(Du)$.\footnote{As we will see, if ${\rm e}$ is uniformly bounded it always holds $F(2\, z)\le C\, F(z)$.}
Without a radial control on $\lambda_{\rm min}$ or $\lambda_{\rm max}$, however, the previous approach fails. 

\begin{remark}
It may be worth noting that multiplying \eqref{i06} by $\partial_\alpha F(Du)$ and summing up, one finds
\[
{\rm div}\, \big(D^2F(Du) D (F(Du))\big)={\rm Tr}\, \big(D^2F(Du)\, D^2u\, D^2F(Du)\, D^2u).
\]
As will be quantified in the following paragraph, the convexity of $F$ ensures that the right-hand side above is still nonnegative, so that we have found another couple $(A, G)$ fitting the previous scheme, namely,
\[
A= D^2F(Du), \qquad G=F.
\]
The issue, however, is that $A$ fails to be  strongly elliptic and is only uniformly elliptic. Needless to say, the linear theory for solutions of 
\[
{\rm div}\, (A(x)\, Dv)=0
\]
under the uniform ellipticity condition 
\[
0\le \lambda_{\rm max}(A(x))\le H\, \lambda_{\rm min}(A(x))\qquad \text{a.\,e.}
\]
is very poor (as can be seen by simple one-dimensional examples) and does not provide boundedness of solutions. Nevertheless, estimate \eqref{ilest} says that the function $v=F(Du)$ behaves ``as if" the operator ${\rm div}\, \big(D^2F(Du)\, Dv)$ is strongly elliptic on $v$.
\end{remark}

\subsection{Outline of the proof}
The possibility of tackling the regularity problem for finite energy solutions of \eqref{i00}
 in dimension $N\ge 3$  under the general uniform ellipticity condition \eqref{i01} has been considered for the first time in \cite{GM}. The approach adopted therein can be seen in  the framework of {\em nonlinear differential inclusions}, originally rooted in the works   \cite{B, T, DP}   and well developed nowadays, thanks to the significant applications investigated in \cite{MS, KMS, VS, DS}, to name a few. Specifically, one can look at \eqref{i00} focusing only on the {\em stress field} $DF(Du)=V$. Formally deriving the equation, we find the system
 \beq
 \label{i04}
 {\rm Div}\, (DV^t)=0
 \eeq
 where ${\rm Div}$ is the row-wise divergence operator acting on matrix-valued functions and $M^t$ denotes the transpose of the matrix $M$. Notice that, contrary to the Laplacian operator
 \[
 \Delta V={\rm Div}\, (DV),
 \]
 the operator in \eqref{i04} is far from being elliptic: its kernel contains all compactly supported solenoidal vector fields and there is no hope to prove regularity of solutions to system \eqref{i04}. Ellipticity, however, can be restored by constraining $DV$ to pointwise belong to a suitable cone. The natural and fruitful one in our framework, as found in \cite{GM}, is given by
\[
 {\mathcal K}_H=\left\{M\in \R^{N\times N}: {\rm Tr}\, (M\, M)\ge \frac{1}{H}\, {\rm Tr}\, (M\, M^t)\right\},
\]
 for   $H$ given in \eqref{i01}. The fact that $DV$ must belong to such a cone depends only on the structural condition 
 \[
 DV= D^2F(Du)\, D^2u,
 \]
i.\,e.\,$DV$ must be the product of a positive definite symmetric matrix  with linear dilatation bounded by $H$ with a symmetric matrix (see Lemma \ref{lemma51} below). Note that, as $H\searrow 1$, we have  ${\mathcal K}_H\searrow {\mathcal K}_1={\rm Sym}_N$, that is the linear space of symmetric $N\times N$ matrices, hence the solutions of the differential inclusion
\beq
\label{i05}
\begin{cases}
 {\rm Div}\, (DV^t)=0\\
 DV\in {\mathcal K}_1
 \end{cases}
 \eeq	
are just the gradients of harmonic functions.

 For $H>1$,  system \eqref{i05} is elliptic at the $L^2$ level, but this kind of restored ellipticity  has a limited effect at finer regularity scales.
Indeed, stress fields of $p$-harmonic equation certainly solve the previous nonlinear differential inclusion for $p=1+1/H$, and for $p\ne 2$ there exists (see \cite{IM}) in the plane a $p$-harmonic function whose stress field vanishes only at the origin and is homogenous of degree
\[
d=\frac{1}{6}\left(p+\sqrt{p^2+12\, p-12}\right)=\frac{1}{6\, H}\left(H+1+\sqrt{H^2+14\, H+1}\right). 
\]
Since the last expression is always less than $1$ for $H>1$, the best regularity one can expect from solutions of \eqref{i05} is at most $C^{\alpha}$, for $\alpha=\alpha(H)\in \ [1/3, 1[$ and for any $H>1$.  We stress here that we were {\em not} able to prove that solutions of \eqref{i05} are bounded, which would imply the qualitative part of Theorem \ref{imt} and would be meaningful in light of current lines of research on differential inclusions (see e.\,g.\,\cite{ADHRS, GRS}). In fact, we had to resort to additional structure possessed by the original equation \eqref{i00}. More precisely, by still denoting $V=DF(Du)$ for a solution of \eqref{i00} and by using the differential inclusion $DV\in {\mathcal K}_H$, we prove a family of {\em Caccioppoli inequalities} of the form
\beq
\label{icac}
\int_{B_r\cap \{F(Du)>k\}} |DV|^2\, dx\le \frac{C_H}{(R-r)^2}\int_{B_R\cap \{F(Du)>k\}}|V|^2\, dx
\eeq
for arbitrary $k\in \R$ and for $R> r>0$.
Then we translate this vectorial Caccioppoli inequality into a family of Caccioppoli inequalities on suitable scalar functions intrinsically defined as suitable Minkowski functionals associated to 
\[
G(z)=F(DF^{-1}(z))
\]
(which is well defined since $DF$ is invertible), namely,
\beq
\label{idgk}
g_k(z)=\inf \left\{t>0: G(z/t)>k\right\}
\eeq
for any $k\in \R$. Notice that, since $V=DF(Du)$, then  
\[
\{F(Du)>k\}=\{G(V)>k\}=\{g_k(V)>1\}
\]
and the quasiuniform convexity of $F$ is pivotal to prove that \eqref{icac} translates to
\beq
\label{icac2}
\int_{B_r} |D(g_k(V)-1)_+|^2\, dx\le \frac{C_H}{(R-r)^2}\, \int_{B_R} |g_k(V)|^2\, dx.
\eeq
Additional fine properties of the family of $1$-homogeneous functions $\{g_k\}$ allow to adapt the classical De Giorgi method for proving boundedness of subsolutions.  Notice that \eqref{icac2} exhibits two main differences with respect to a standard Caccioppoli inequality. On the one hand, due to the vectorial nature of \eqref{icac}, its dependence on the level $k$ is encapsulated rather implicitly in the family of $1$-homogeneous functions $\{g_k\}$ rather than directly on $V$. More substantially, its right-hand side is quite bigger than the one usually found for scalar problems, which would provide an integrand of the form $(g_k(V)-1)_+$ on the right instead of $g_k(V)$. Nevertheless, the De Giorgi method can still be adapted to this weaker setting, providing an $L^\infty$ bound on $F(Du)$ in terms of the $L^2$ norm of $V$. It is quite fortunate that this estimate implies, through a refinement of the results in \cite{GM},  the natural bound \eqref{ilest}.

\subsection{Related results}

In the uniformly elliptic setting the result in Theorem \ref{imt} has, as already remarked, been previously obtained in \cite{L} under an Uhlenbeck type control of the form \eqref{irad}, with $\lambda_{\rm min}(z)$ and $\lambda_{\rm max}(z)$ depending only on the modulus of $z$. In \cite{L}, actually, $C^{1, \alpha}$ regularity is proved under these assumptions. Lipschitz regularity has been proved in \cite{CM1} for solutions of \eqref{icm} coupled with Dirichlet or Neumann boundary conditions, with optimal  regularity assumptions both on $f$ and on the domain. The corresponding regularity for solutions of systems with Uhlenbeck structure is treated in \cite{CM2}.  

Notice that in \cite{L, CM, CM2} the authors assume that the function $G'$ appearing in \eqref{icm} (or in the lower/upper controls on $D^2F(z)$) belongs to $C^1(\R_+)$, while we admit $G'\in {\rm Lip}_{\rm loc}(\R_+)$ (this regularity seems optimal due to \cite[Example 3.5]{GM}). 
However, inspecting the proofs in \cite{CM, CM2} shows that the results therein hold under this more general assumption. 

Our result covers in particular the Finslerian anisotropic setting, which we will now briefly describe due to its relevance in recent research trends. The integrand considered in this framework are of the form 
\[
F(z)=G(h(z))
\]
where $G\in C^1(\R_+)\cap W^{2,\infty}_{\rm loc}(\R_+)$ is increasing, convex and fulfils \eqref{i09} (see \cite[Example 3.7]{GM}),
while $h$ is a $C^2(\R^N\setminus \{0\})$  convex,  positive, $1$-homogeneous function  (not necessarily symmetric) such that the principal curvatures of  $\partial \{h<1\}$ are bounded from below by a positive constant, see \cite[Example 3.7]{GM} for more details. 
The arguments  in \cite[Section 3]{CFV} show that $C^{1, \alpha}$ regularity holds true for the corresponding minimisers when $G$ has more stringent controls of $p$-growth type and $h\in C^{3, \alpha}(\R^N\setminus \{0\})$.

While the non-uniformly elliptic case is not treated in this research, it is worth mentioning that there are many instances where Lipschitz regularity can be obtained even if the ellipticity ratio is unbounded.  Generally speaking, in order to get Lipschitz continuity of local minimisers, one usually requires a growth control on the ellipticity ratio outside a bounded set, but in most cases the resulting Lipschitz bound critically depends on the diameter of the aforementioned set. The literature in this area is huge and, regarding local minimisers, we refer to \cite{BM} for some recent results and  for a rather comprehensive description of this research topic. Lipschitz regularity can be obtained for rather wild functionals by the so called Hilbert-Haar method, adapted to the Calculus of Variations by Stampacchia and Hartman \cite{S, HS}. This allows to obtain Lipschitz regularity of minimisers having a prescribed boundary value obeying the so-called bounded slope condition, under very loose conditions on the integrand. We refer to \cite{BB} and the literature therein for more details on this approach. Another class of non-uniformly elliptic integrands are the so-called {\em orthotropic} ones, where both ${\rm Sing}_F$ and ${\rm Deg}_F$ are unbounded,  being union of hyperplanes $\{z_i=0\}$. In this setting, assuming  a-priori boundedness  of the minimiser is the key to infer Lipschitz regularity.   We refer to \cite{BBL} and the literature therein  for further details on this class of integrands.

\subsection{Structure of the paper}
In Section \ref{2} we collect the properties of quasiuniformly convex functions which are relevant to the proof. Section \ref{3} is devoted to a refinement of the Sobolev regularity of the stress field $V=DF(Du)$, originally obtained in \cite{GM}. In Section \ref{4} we construct a sequence of approximating elliptic problems and corresponding solutions, allowing to reduce the proof of Theorem \ref{imt} to the smooth setting. In Section \ref{5} we prove the Caccioppoli inequality \eqref{icac}, while in Section \ref{6} we derive several properties of the family of  $1$-homogeneous functions defined in \eqref{idgk}. The final Section \ref{7} is devoted to the proof of Theorem \ref{imt}.

{\bf Acknowledgments.} \  We thank prof. L. Kovalev for improving a preliminary version of the paper. The authors are member of GNAMPA of INdAM.
G.\,M.\,acknowledges the support of DFG via grant GZ: MA 10100/1-1 project number 496629752.
S.\,M.\,is  partially supported by the  projects PIACERI linea 2  and linea 3 of the University of Catania and by the GNAMPA's projects {\em Equazioni alle derivate parziali di tipo ellittico o parabolico con termini singolari} and {\em Problemi ellittici e parabolici con termini di reazione singolari e convettivi}.

{\bf Notations.} \  In the whole paper we restrict to the case $N\ge 2$.  By $|v|$ we denote the Euclidean norm of a vector $v \in \RN$, while $(v, w)$ stands for the scalar product of any $v, w \in \RN$. Given a vector field, upper and lower indexes stand for its components and its derivatives, respectively.  We sum over repeated indexes.
 With the symbol $\Om$ we mean a bounded, open subset of $\RN$, while $B_r(x_0)$ denotes a  ball with  center $x_0 \in \RN$ and radius $r>0$, and by  $B_r$ we indicate a ball of radius $r$, not necessarily centred at the origin.
For any measurable $E\subset \R^N$, $|E|$ denotes its $N$-dimensional Lebesgue measure. We will omit the domain of integration when it is the whole $\RN$, if this causes no confusion. 
Furthermore, for the sake of notational simplicity we set  $\|f\|_m:= \|f\|_{L^m(\RN)}$.

Let $M= (m_{ij})$ be an $N \times N$ matrix with real entries, and let $M^{t}$ denote its transpose. We consider the Frobenius norm
	\[
	|M|_2= \l(\sum_{ij=1}^N |m_{ij}|^2 \r)^{1/2}
	\]
arising from the scalar product $(M_1, M_2)_2= \operatorname{Tr}(M_1 \, M_2^{t})$. 
We further  denote by  ${\rm Id}$ the identity matrix.
Finally, for any  matrix $M$, $\sigma_{\rm max}(M)$ and $\sigma_{\rm min}(M)$ denote its maximum and minimum singular values (i.\,e.\,the square roots of the eigenvalues of $M\, M^t$), respectively. If $M$ is symmetric and non-negative definite, we will  use the notation $\lambda_{\rm max}(M)$ and $\lambda_{\rm min}(M)$ as in this case eigenvalues and singular values coincide.

\medskip

\section{Quasiuniformly convex integrands}
\label{2}

\begin{definition}

A map $\Phi:\R^N\to \R^N$ is {\em $K$-quasiconformal}  if it is a $ W^{1,1}_{\rm loc}(\R^N; \R^N)$ homeomorphism, it is a.\,e.\,differentiable and the inequality
\[
\frac{1}{K} \, \sigma_{\rm max}(D\Phi)^N\le |J\, \Phi|\le K\, \sigma_{\rm min}(D\Phi)^N
\]
holds a.\,e.\,in $\R^N$.
\end{definition}

Let $\overline{\R}^N$ denote the one-point compactification of $\R^N$. By \cite[Theorem 17.3]{V} any quasiconformal map  $\Phi:\R^N\to \R^N$ can be extended to a homeomorphism of $\overline{\R}^N$ by setting $\Phi(\infty)=\infty$, in the meantime keeping its (geometric) quasiconformality constant $K$ unaltered.
Moreover,  $\Phi\in W^{1,N}_{\rm loc}(\R^N)$ and is {\em quasisymmetric},  that is, there are an increasing homeomorphism $\eta:\R_+\to \R_+$ and a constant $C>0$ such that 
\beq
\label{defqs}
\frac{\left|\Phi(z)-\Phi(z_0)\right|}{|\Phi(w)-\Phi(z_0)|}\le C \, \eta\left(\frac{|z-z_0|}{|w-z_0|}\right)  
\eeq
for all $z_0 \in \RN$ and $z, w\in \R^N\setminus\{z_0\}$.
Set, for $\alpha>0$ and $t \ge 0$,
\beq
\label{defetaH}
\eta_\alpha(t)=\max\bigl\{t^\alpha, t^{1/\alpha}\bigr\}.
\eeq
By \cite[Theorems 3.18 and 5.1]{AVV},  any $K$-quasiconformal map $\Phi$ fulfils \eqref{defqs} with 
\[
C=C(K), \qquad \eta=\eta_{K^{1/(N-1)}}.
\]
The {\em maximal linear dilatation} of  $\Phi$ is defined as the (finite) number
\[
H=\esssup_{z\in \R^N} \frac{\sigma_{\rm max}(D\Phi(z))}{\sigma_{\rm min}(D\Phi(z))}.
\]
Elementary linear algebra shows that such a quasiconformal $\Phi$ is actually $H^{N-1}$-quasiconformal, hence it fulfils the distortion estimate
	 \beq
	 \label{quasisymmetry}
\frac{\left|\Phi(z)-\Phi(z_0)\right|}{|\Phi(w)-\Phi(z_0)|}\le C\, \eta_{H}\left(\frac{|z-z_0|}{|w-z_0|}\right)
\eeq
for all $z_0\in \R^N$ and all $z, w\in \R^N\setminus\{z_0\}$.
Notice that the constant $C$ actually depends on $N$ as well. Finally, recall that if  $\Phi$ is quasiconformal then so is $\Phi^{-1}$ and the maximal linear dilatations of $\Phi$ and $\Phi^{-1}$ coincide. Hence \eqref{quasisymmetry} holds true for $\Phi^{-1}$ as well.

\begin{definition}
A map $\Phi:\R^N\to \R^N$ is {\em $\delta$-monotone} for some $\delta\in \, ]0, 1]$ if 
\[
\left(\Phi(z)-\Phi(w), z-w\right)\ge \delta\,  |\Phi(z)-\Phi(w)|\, |z-w|.
\]
\end{definition}

Kovalev's theorem \cite{K} shows that any non-constant $\delta$-monotone map is quasiconformal. In particular it is a homeomorphism and, directly from the definition, its inverse is $\delta$-monotone as well.
Kovalev theorem has a quantitative version proved in \cite[Theorem 1]{AIM}. It states that any $\delta$-monotone map has a maximal linear dilatation obeying
\[
H\le \frac{1+\sqrt{1-\delta^2}}{1-\sqrt{1-\delta^2}}
\]
and this bound is sharp, i.\,e.\,it reduces to an equality for the $\delta$-monotone linear map $v\mapsto A\, v$, where 
\beq
\label{defA}
A=
\begin{pmatrix}
1+\sqrt{1-\delta^2}&0\\
0&1-\sqrt{1-\delta^2}
\end{pmatrix}.
\eeq
The opposite implication is in general not true. To see this, consider the linear map $v\mapsto B\, v$, with $B$ given by
\[
B=
\begin{pmatrix}
1&-2\\
2&-1
\end{pmatrix}
\]
which is quasiconformal with $H=3$ but   not even monotone. However, full (even quantitatively) equivalence of the two concepts holds true in the  class of {\em gradient mappings}.

\begin{definition}
A differentiable function  $F:\R^N\to \R$ is  $H$-quasiuniformly convex (briefly, $H$-q.\,u.\,c.) if it is convex and its gradient map is quasiconformal, with maximal linear dilatation bounded by $H$.
\end{definition}
 More explicitly (and by making use of Aleksandrov's theorem), $F$ is $H$-q.\,u.\,c.\,if
 \begin{itemize}
 \item[(i)]
 $F$ is convex, $C^1$ and $W^{2, 1}_{\rm loc}(\R^N)$, and not affine;
 \item[(ii)]
 It holds
\beq
\label{quc}
\lambda_{\rm max}(D^2 F(z))\le H\, \lambda_{\rm min}(D^2F(z))
\eeq
 for a.\,e.\,point $z$ of second order differentiability.
 \end{itemize}
 
By \cite[Theorem 3.1 and Lemma 3.2]{KoMa}, any q.\,u.\,c.\,function  is strictly convex and coercive, thus it has a unique minimum point.
Since, given two symmetric matrices $M_1, M_2$,  it holds
\beq
\label{ex3}
\lambda_{\rm min}(M_1+M_2)\ge \lambda_{\rm min}(M_1)+\lambda_{\rm min}(M_2), \qquad \lambda_{\rm max}(M_1+M_2)\le \lambda_{\rm max}(M_1)+\lambda_{\rm max}(M_2),
\eeq
 the set of $H$-q.\,u.\,c.\,functions turns out to be a convex cone, i.\,e.\,it is closed by sum and positive scalar multiples. Moreover, it is also closed by isometric change of variables, as well as dilations. Finally, summing an affine function to a $H$-q.\,u.\,c.\,function still gives a $H$-q.\,u.\,c.\,function.  For these reasons we will frequently {\em normalize} q.\,u.\,c.\,integrands by requiring that
 \beq
 \label{hypF}
 F(z)\ge F(0)=0
 \eeq
 and that $i_F=1$, where
 \beq
 \label{defif}
 i_F:=\inf_{|z|=1} |DF(z)|,
 \eeq
 simply by considering
 \beq
 \label{tildeF}
 \tilde{F}(z)=\frac{1}{i_F}\, \left(F(z+\bar z)-F(\bar z)\right), \qquad \text{where } {\rm Argmin}\, (F)=\{\bar z\}.
 \eeq
 
\begin{ex}
\label{exsd}
Some examples of q.\,u.\,c.\,integrands have already been discussed in the previous section. Here, given two points $z_1\ne z_2$ in $\R^N$, we construct a q.\,u.\,c.\,function $F$ such that ${\rm Sing}_F=\{z_1\}$ and ${\rm Deg}_F=\{z_2\}$, the latter sets being as in \eqref{sing-deg}. 

Without loss of generality, assume $z_1=0$ and $z_2=w$, for fixed $w\in \R^N$, and set $r=|w|$. For $p>2>q>1$ choose
\[
d(z)=\frac{|z|^{p}}{p},  \qquad s(z)= \frac{|z-w|^{q}}{q},
\]
so that a direct computation shows (cf. \cite[Example 3.6]{GM})
\begin{align}
\lambda_{\rm min}(D^2d(z))&= |z|^{p-2},  & \lambda_{\rm max}(D^2d(z))&=(p-1)\, |z|^{p-2}, \label{ex1}\\
\lambda_{\rm min}(D^2s(z))&= (q-1)\, |z|^{q-2}, & \lambda_{\rm max}(D^2 s(z))&= |z|^{q-2}. \label{ex2}
\end{align}
Choose $\varphi\in C^\infty(\R^N\setminus \{w\})$  such that 
\[
\varphi(z)=
\begin{cases}
s(z)&\text{if $|z-w|\le r/4$}\\
1 &\text{if $|z-w|\ge r/2$}
\end{cases}
\]
and consider the (positive and finite) numbers
\begin{align*}
 \alpha&=\inf\left\{\lambda_{\rm min} (D^2d(z)):|z|\ge r/2\right\},  &\beta&=\sup \left\{|D^2\varphi(z)|_2:|z-w|\ge r/4\right\},\\
 \gamma&=\sup\left\{\lambda_{\rm max} (D^2d(z)):|z-w|\le r/4\right\},  &\delta&=\inf\left\{\lambda_{\rm min}(D^2 s(z)):|z-w|\ge r/4\right\}.
  \end{align*}
For any $\eps>0$, the function $F(z)=d(z)+\eps\, \varphi(z)$ belongs to  $C^1(\R^N)\cap W^{2,N}_{\rm loc}(\R^N)$ and furthermore $F\in C^2(\R^N\setminus\{w\})$, so that ${\rm Sing}_F\subseteq \{w\}$. 

For  $z\in B_{r/2}(0)$ we have 
\[
\lambda_{\rm min}(D^2F(z))=\lambda_{\rm min}(D^2d(z)), \qquad \lambda_{\rm max}(D^2(F(z))=\lambda_{\rm max}(D^2d(z)),
\]
which shows through \eqref{ex1} that 
\[
\frac{\lambda_{\rm max}(D^2F(z))}{\lambda_{\rm min}(D^2F(z))}\le p-1\qquad \text{in $B_{r/2}(0)$}
\]
and $ \lambda_{\rm min}(D^2F(z))\to 0$ for $z\to 0$.
From \eqref{ex3} we infer that for all $z\in B_{r/4}(w)$
\[
\lambda_{\rm min}(D^2F(z)) \ge \alpha+  \eps \, \lambda_{{\rm min}}(D^2s(z)), \qquad  \lambda_{\rm max}(D^2F(z)) \le \gamma+ \eps\, \lambda_{{\rm max}}(D^2s(z)),
\]
so that by \eqref{ex2}  it follows
\[
\frac{\lambda_{\rm max}(D^2F(z))}{\lambda_{\rm min}(D^2F(z))}\le \max\l\{\frac{\gamma}{\alpha}, \frac{1}{q-1}\r\}  \qquad \text{in $B_{r/4}(w)$}
\]
and $\lambda_{\rm min}(D^2F(z))\to \infty$ for $z\to w$.
Finally, for  $z\notin B_{r/2}(0)\cup B_{r/4}(w) $  we have
\[
\lambda_{\rm min}(D^2F(z))\ge \lambda_{\rm min}(D^2d(z)) -\eps\, \beta, \qquad \lambda_{\rm max}(D^2F(z))\le  \lambda_{\rm max}(D^2d(z)) +\eps\, \beta,
\]
so that for $\eps\, \beta<\alpha/2$ it holds $\lambda_{\rm min}(D^2F(z))\ge \alpha/2$, while  \eqref{ex1} yields 
\[
\frac{\lambda_{\rm max}(D^2F(z))}{\lambda_{\rm min}(D^2F(z))}\le 3\, (p-1)\qquad \text{in $\R^N\setminus\big(B_{r/2}(0)\cup B_{r/4}(w)\big)$}.
\]
All in all, for $\eps<\alpha/(2\, \beta)$, $F$ is q.\,u.\,c.\,with ${\rm Sing}_F=\{w\}$, ${\rm Deg}_F=\{0\}$.
\end{ex}

Many properties of q.\,u.\,c.\,functions have been studied in  \cite{K, KoMa}, to which we refer for further details and characterisations. Here we gather the ones that are needed in the proof of our main result.
We start by a converse of Kovalev's theorem relating the $\delta$-monotonicity to the quasiconformality in a quantitative form.

\begin{lemma}\label{lemma25}
Let  $F:\R^N\to \R$ be  differentiable and not affine. Then, $F$ is $H$-q.\,u.\,c.\,if and only if $DF$ is $\delta$-monotone, where 
\[
\delta=\frac{2\, \sqrt{H}}{H+1}, \qquad \text{or} \qquad H=\frac{1+\sqrt{1-\delta^2}}{1-\sqrt{1-\delta^2}},
\]
and the bounds are sharp.
 \end{lemma}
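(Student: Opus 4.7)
The plan is to split the equivalence into two implications. The direction \emph{$DF$ is $\delta$-monotone $\Rightarrow$ $F$ is $H$-q.\,u.\,c.\,with $H\le (1+\sqrt{1-\delta^2})/(1-\sqrt{1-\delta^2})$} is immediate from Kovalev's theorem \cite{K} (which gives quasiconformality of the non-constant gradient map) combined with the sharp linear dilatation bound of \cite[Theorem 1]{AIM}, both recalled in the excerpt; sharpness there is realised by the gradient of the quadratic form associated to the matrix \eqref{defA}. The substantive content is the converse: for $F$ an $H$-q.\,u.\,c.\,integrand, I want to show that $DF$ is $\delta$-monotone with $\delta=2\sqrt{H}/(H+1)$.

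The core step is the pointwise algebraic inequality: for any positive semidefinite symmetric $A\in \R^{N\times N}$ with $\lambda_{\max}(A)\le H\,\lambda_{\min}(A)$,
\beq
\label{pwkey}
(A\xi,\xi)\ge \frac{2\sqrt{H}}{H+1}\,|A\xi|\,|\xi|\qquad \forall\,\xi\in\R^N.
\eeq
After diagonalising $A$ with eigenvalues $\lambda_1\le\cdots\le \lambda_N$ and setting $p_i=\xi_i^2/|\xi|^2$, the square of the ratio of the two sides becomes $E[\lambda]^2/E[\lambda^2]$ for a probability law $p$ on $\{\lambda_1,\dots,\lambda_N\}$. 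By a standard extremal argument (variance is maximised over distributions on $[\lambda_1,\lambda_N]$ by two-point distributions on the endpoints) the infimum of this ratio equals $4\lambda_1\lambda_N/(\lambda_1+\lambda_N)^2$; since $r\mapsto 2\sqrt{r}/(1+r)$ is decreasing on $[1,\infty)$ and $\lambda_N/\lambda_1\le H$, this yields \eqref{pwkey}.

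Given \eqref{pwkey}, the $\delta$-monotonicity of $DF$ follows by integration along segments: for $z,w\in \R^N$, I would write
\[
(DF(z)-DF(w),z-w)=\int_0^1 \bigl(D^2F(w+t(z-w))(z-w),\,z-w\bigr)\,dt,
\]
apply \eqref{pwkey} to $A=D^2F(w+t(z-w))$ under the integral, and then use the triangle inequality $\int_0^1 |D^2F(\cdot)(z-w)|\,dt\ge |DF(z)-DF(w)|$ to conclude. The fundamental theorem of calculus invoked above requires $DF$ to be absolutely continuous along \emph{every} segment; since $F\in W^{2,1}_{\rm loc}$ only yields this along almost every line, I would first perform the argument on the mollification $F_\eps=F*\rho_\eps$, which is smooth and inherits the $H$-q.\,u.\,c.\,property from the pointwise bound on $D^2 F$ via the integral representation $D^2 F_\eps=\rho_\eps * D^2F$, and then let $\eps\to 0$ using the continuity of $DF$.

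Sharpness of the constant $\delta=2\sqrt{H}/(H+1)$ is inherited from the quadratic $F(z)=z_1^2/2+Hz_2^2/2$, for which equality in \eqref{pwkey} is attained at $\xi=(1,H^{-1/2},0,\dots,0)$; combined with the sharpness of the reverse direction via \eqref{defA}, neither bound can be improved. I expect the main obstacle to be the algebraic lemma \eqref{pwkey}: the precise constant must emerge from a genuinely two-dimensional extremal problem, and the probabilistic reformulation is what makes the reduction from $N$ dimensions to two dimensions transparent.
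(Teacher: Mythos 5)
Your proposal is correct and its core coincides with the paper's own proof: both reduce the converse implication to the pointwise inequality $(A\xi,\xi)\ge \tfrac{2\sqrt H}{H+1}\,|A\xi|\,|\xi|$ for symmetric $A\ge 0$ with $\lambda_{\rm max}(A)\le H\,\lambda_{\rm min}(A)$, proved by diagonalising and solving the same two-point extremal problem (the paper invokes Cassels' inequality, which is exactly your Kantorovich-type bound $E[\lambda^2]/E[\lambda]^2\le (\lambda_1+\lambda_N)^2/(4\lambda_1\lambda_N)$), and both obtain sharpness from the same diagonal quadratic. The one structural difference is that the paper outsources the passage from the pointwise Hessian inequality to $\delta$-monotonicity to \cite[Section 3]{K}, whereas you re-derive it by integrating along segments after mollifying; this is legitimate (the cone of admissible Hessians is convex by \eqref{ex3}, so $D^2F_\eps=\rho_\eps*D^2F$ stays in it, consistently with Proposition \ref{proqu}-(4)) and makes the lemma self-contained at essentially no extra cost. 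One phrase to tighten: the relevant extremal problem is not ``maximise the variance'' but ``maximise $E[\lambda^2]$ at fixed mean'' (equivalently, maximise the coefficient of variation); the extremiser is still a two-point law on $\{\lambda_1,\lambda_N\}$, but with the asymmetric weights $\lambda_N/(\lambda_1+\lambda_N)$ and $\lambda_1/(\lambda_1+\lambda_N)$ rather than the symmetric variance-maximising ones, as the paper's explicit optimal vector shows.
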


\begin{proof}
The fact that if $DF$ is $\delta$-monotone and non-constant then $F$ is  $H$-q.\,u.\,c.   has been proved, as mentioned before, in \cite[Theorem 1]{AIM}. To prove the opposite implication, suppose $F$ is $H$-q.\,u.\,c., so that $DF$ is quasiconformal. By \cite[Section 3]{K}, the $\delta$-monotonicity of $DF$  is equivalent to
\beq
\label{matin}
\left(D^2F(z)\, v, v\right)\ge \delta\, |D^2F(z)\, v|\, |v|
\eeq
at a.\,e.\,points of second order differentiability where \eqref{quc} holds true. Note that we can assume that $D^2F$ is symmetric, as this follows from Alexandrov's theorem (see \cite[Corollary 2.9]{R}), and that it is strictly positive definite, thanks to the quasiconformality of $DF$. Since \eqref{matin} is invariant by orthogonal change of variables, we can assume that $D^2F$ is diagonal with positive eigenvalues $\lambda_1<\lambda_2< \dots< \lambda_N$. We are therefore reduced to find, for $v=(v_1, \dots, v_N)$, the value of
\[
I:=\inf_{v\ne 0} \frac{\sum_{i}\lambda_i\, v_i^2}{\Big({\sum_i \lambda_i^2\, v_i^2}\Big)^{1/2} \Big({\sum_i v_i^2}\Big)^{1/2}}.
\]
In order to determine the latter we invoke Cassels inequality (see \cite[Appendix]{W}), which reads as
\[
\frac{\sum_i w_i\, a^2_i\sum_iw_i\, b_i^2}{\left(\sum_i w_i\, a_i\, b_i\right)^2}\le \frac{(M+m)^2}{4\, M\, m}
\]
for any choice of $w_i\ge 0$ not all identically equal to zero and $a_i, b_i>0$ such that
\[
0<m<a_i/b_i<M.
\]
Indeed, it suffices to choose $b_i=1$, $a_i=\lambda_i$ and $w_i=v_i^2$ to obtain that 
\[
I\ge \frac{2\, \sqrt{\lambda_1\, \lambda_N}}{\lambda_1+\lambda_N}
\]
and actually the equality holds true for the vector
\[
 \left(\sqrt{\frac{\lambda_N}{\lambda_1+\lambda_N}}, 0, \dots, 0, \sqrt{\frac{\lambda_1}{\lambda_1+\lambda_N}}\right).
\]
Since $\lambda_N\le H\, \lambda_1$ by assumption and the map $t\mapsto 2\sqrt{t}/(t+1)$ is decreasing, we infer that
\[
I\ge  \frac{2\, \sqrt{\lambda_1\, \lambda_N}}{\lambda_1+\lambda_N}= \frac{2\, \sqrt{\lambda_N/ \lambda_1}}{\lambda_N/\lambda_1+1}\ge \frac{2\, \sqrt{H}}{H+1}
 \]
 so that \eqref{matin} is proved for $\delta=2\, \sqrt{H}/(H+1)$, as claimed. 
 The optimality of this estimate follows by choosing $F(z)=(A\, z, z)$ with $A$ given in \eqref{defA} (here we use the fact that $A$ is symmetric).
 \end{proof}

For further reference, we regroup the previous discussion and some other useful properties of $H$-q.\,u.\,c.\,functions in the following proposition.

\begin{proposition} \label{proqu} 
Let $F:\R^N\to \R$ be a $H$-q.\,u.\,c. function. The following holds:
\begin{enumerate}
\item
$F$ is strictly convex, coercive and $W^{2, N}_{\rm loc}(\R^N)$.
\item
Both $DF$ and $DF^{-1}$ are $\eta_H$-quasisymmetric and  $2\sqrt{H}/(H+1)$-monotone.
\item
There exists $C=C(H, N)>0$ such that, if $F(\bar z)=\min_{\R^N} F$, then
\begin{align}
\label{deltamon2}
& |DF(z)-DF(\bar z)|\, |z-\bar z|\le C\, |F(z)-F(\bar z)| \\
\label{estF}
 & \frac{F(w)-F(\bar z)}{F(z)-F(\bar z)}\le C\, \frac{|w-\bar z|}{|z-\bar z|}\, \eta_H\left( \frac{|w-\bar z|}{|z-\bar z|}\right)
\end{align}
for all $z, w\in \R^N\setminus\{\bar z\}$.
\item
For any $\varphi\in C^\infty_c(\R^N; [0, +\infty))$   the function $F*\varphi (z)+\mu\, |z|^2$ is $H$-q.\,u.\,c. Moreover, if $\|\varphi\|_1=1$, for all sequences $(\eps_n)$, $(\mu_n)$ such that $\eps_n\downarrow 0$, $\mu_n\downarrow 0$, the sequence
\beq
\label{appr0}
F_n(z)=F*\varphi_{\eps_n}(z)+\frac{\mu_n}{2}\, |z|^2, \qquad \text{with } \varphi_{\eps_n}(z)=\frac{1}{\eps_n^N}\, \varphi\left(\frac{z}{\eps_n}\right),
\eeq
is such that $F_n \to F$ in $C^{1}_{\rm loc}(\R^N)$ and $DF_n^{-1}\to DF^{-1}$ in $C^0_{\rm loc}(\R^N)$.
\item
For any $\delta>0$, the Moreau-Yoshida reguarization $F_\delta$ of $F$, defined as
\[
F_{\delta}(z)= \inf_{w\in \R^{N}}\Big\{F(w)+\frac{1}{2\, \delta} |w-z|^{2}\Big\},
\]
is $H$-q.\,u.\,c. and it holds $\displaystyle \lambda_{\rm max}(D^2F_\delta(z)) \le 1/\delta$. 
\end{enumerate}
\end{proposition}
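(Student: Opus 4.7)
The plan is to treat the five assertions in order, invoking Lemma \ref{lemma25} throughout and exploiting that the unique minimum $\bar z$ of $F$ satisfies $DF(\bar z)=0$. Assertions (1) and (2) are essentially recycling of quoted or previously established facts: strict convexity and coercivity come from \cite[Theorem 3.1 and Lemma 3.2]{KoMa}, the $W^{2,N}_{\rm loc}$ regularity follows because any quasiconformal map belongs to $W^{1,N}_{\rm loc}$, the $\eta_H$-quasisymmetry of $DF$ is \eqref{quasisymmetry}, and its $\delta$-monotonicity with $\delta=2\sqrt{H}/(H+1)$ is Lemma \ref{lemma25}. The corresponding statements for $DF^{-1}$ use that $DF$ is a homeomorphism of $\R^N$ with the same maximal linear dilatation as $DF^{-1}$, and that the inverse of a $\delta$-monotone map is again $\delta$-monotone.

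For (3), I would parametrise along a radial segment from $\bar z$. Writing $z=\bar z+t\omega$ with $|\omega|=1$ and using $F\in C^1$,
\[
F(z)-F(\bar z)=\int_0^t (DF(\bar z+s\omega),\omega)\,ds\ge \delta\int_0^t |DF(\bar z+s\omega)|\,ds
\]
by $\delta$-monotonicity at $\bar z$. For $s\in[t/2,t]$, quasisymmetry \eqref{quasisymmetry} based at $\bar z$ gives $|DF(\bar z+s\omega)|\ge c(H)|DF(z)|$, and the lower Riemann sum yields \eqref{deltamon2}. For \eqref{estF}, I would bound the numerator by convexity $F(w)-F(\bar z)\le (DF(w),w-\bar z)\le |DF(w)||w-\bar z|$, bound the denominator from below by \eqref{deltamon2}, and control the ratio $|DF(w)|/|DF(z)|$ by $C\eta_H(|w-\bar z|/|z-\bar z|)$ via quasisymmetry of $DF$ at $\bar z$, exploiting again that $DF(\bar z)=0$.

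For (4), convexity of $F*\varphi$ is immediate and, at a.e.\ $z$, differentiation under the integral sign together with the bounds on the extremal eigenvalues of $D^2F$ give
\[
\lambda_{\rm max}(D^2(F*\varphi)(z))\le \int\lambda_{\rm max}(D^2F(z-y))\,\varphi(y)\,dy\le H\int \lambda_{\rm min}(D^2F(z-y))\,\varphi(y)\,dy\le H\,\lambda_{\rm min}(D^2(F*\varphi)(z)),
\]
while adding $\mu|z|^2/2$ shifts both extremal eigenvalues by $\mu\ge 0$, which cannot increase their ratio. The convergence $F_n\to F$ in $C^1_{\rm loc}$ is the standard mollifier estimate combined with $\mu_n\to 0$; for $DF_n^{-1}\to DF^{-1}$ in $C^0_{\rm loc}$ I would use that $\{DF_n\}$ share the same $\delta$-monotonicity constant, so the inverses form an equicontinuous family on compacts (by uniform $\eta_H$-quasisymmetry), and then transfer $C^1_{\rm loc}$ convergence of the forward maps to $C^0_{\rm loc}$ convergence of the inverses via a standard compactness argument for quasiconformal maps. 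Part (5) reduces to the identity $DF_\delta(z)=DF(w_*(z))$ for the Moreau resolvent $w_*$; differentiating $DF(w_*)+(w_*-z)/\delta=0$ at a.e.\ $z$ yields
\[
D^2F_\delta(z)=D^2F(w_*)\,(I+\delta\, D^2F(w_*))^{-1},
\]
so the eigenvalues of $D^2F_\delta(z)$ are $\mu/(1+\delta\mu)$ where $\mu$ ranges over those of $D^2F(w_*)$. Since $\mu\mapsto \mu/(1+\delta\mu)$ is increasing on $[0,\infty)$ with image contained in $[0,1/\delta)$, both the bound $\lambda_{\rm max}(D^2F_\delta)\le 1/\delta$ and the preservation of the ellipticity ratio below $H$ follow at once.

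\textbf{Main obstacle.} The principal subtlety is in (3): the quasisymmetric estimate \eqref{defqs} excludes the base point $z_0=\bar z$, but $DF(\bar z)=0$ is exactly the value at which we want to apply it; the trick is to read quasisymmetry at $z_0=\bar z$ as a statement about the non-degenerate ratios $|DF(z)|/|DF(w)|$ for $z,w\ne \bar z$ and to combine it with the radial integration above. The other delicate point is the $C^0_{\rm loc}$ convergence of inverses in (4), where uniformity of the $\delta$-monotonicity constant across $\{DF_n\}$ is essential to promote locally uniform convergence of $DF_n$ to locally uniform convergence of $DF_n^{-1}$.
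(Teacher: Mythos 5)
Your proposal is correct and, for the substantive part (3), follows essentially the same route as the paper: radial integration of the $\delta$-monotonicity inequality from $\bar z$ (where $DF(\bar z)=0$) over the outer half of the segment, combined with the quasisymmetry \eqref{quasisymmetry} based at $z_0=\bar z$ to compare $|DF(\bar z+s\omega)|$ with $|DF(z)|$; your only variation is bounding the numerator in \eqref{estF} by the convexity inequality $F(w)-F(\bar z)\le (DF(w),w-\bar z)$ rather than by integrating $|DF(tw)|$ along the ray, which is an equivalent and slightly cleaner step. (One small remark: \eqref{defqs} does not actually exclude $z_0=\bar z$ as base point, only $z,w\ne z_0$, so the ``obstacle'' you flag is not really there — your resolution is exactly what the paper does.) The genuine difference is in parts (4) and (5), where the paper simply cites \cite[Proposition 2.3]{GM} for the stability of $H$-q.\,u.\,convexity under mollification and for the Moreau--Yosida eigenvalue formulas, and invokes Arens' theorem for the locally uniform convergence of inverses; you instead give self-contained arguments — the sub/superadditivity \eqref{ex3} of extremal eigenvalues passed through the convolution integral, the explicit resolvent computation $D^2F_\delta=D^2F(w_*)\,(I+\delta D^2F(w_*))^{-1}$, and an equicontinuity/Arzel\`a--Ascoli argument for the inverses based on their uniform quasisymmetry. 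These reproduce exactly the formulas the paper quotes, so nothing is lost; the only detail worth spelling out in your compactness argument is the local uniform boundedness of $DF_n^{-1}$ (e.\,g.\ via $K\subset DF(B_{R/2})\subset DF_n(B_R)$ for large $n$), which is needed before equicontinuity can be extracted from quasisymmetry.
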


\begin{proof}

%

The first two properties have already been discussed, so we focus on  {\em (3)}. We will prove it by exploiting the $\delta$-monotonicity and the quasisymmetry of $DF$. To this aim we note that, by considering $F(z+\bar z)-F(\bar z)$, we can assume $\bar z=0$ and $F(z)\ge F(0)=0$.
To prove \eqref{deltamon2}, we start by noticing that the $\delta$-monotonicity of $DF$ gives
\beq
\label{F1}
F(z)=\int_0^1 \left(DF(t\, z), z), z\right)\, dt\ge \delta\, |z|\, \int_{1/2}^1 |DF(t\, z)|\, dt.
\eeq
On the other hand,  inequality \eqref{quasisymmetry} applied to $DF$ with $z_0=0$ (and since $DF(0)= 0$, too) gives
\[
\frac{|DF(z)|}{|DF(t\, z)|}\le C\,  \eta_H(1/t)\le C\,  2^H\qquad \forall \, t\ge 1/2,
\]
so that  \eqref{F1} can be estimated as
\[
F(z)\ge \frac{\delta}{ C\, 2^{H+1}} \, |DF(z)|\, |z|.
\]
Inequality \eqref{estF} is proven similarly. It holds
\[
F(w) \le \int_0^1 |DF( t\, w)|\, |w|\, dt\le |w |\, \sup\left\{ |DF(x)|:|x |\le |w |\right\}
\]
which combined with \eqref{deltamon2} yields
\[
\frac{F(w)}{F(z)}\le C\, \frac{|w|}{|z|}\, \sup\left\{ \frac{|DF(x)|}{|DF(z)|}:|x |\le |w |\right\}.
\]
Thanks to the quasisymmetry \eqref{quasisymmetry} of $DF$ we get
\[
\frac{|DF(x)|}{|DF(z)|}\le C\, \eta_H\left(\frac{|x|}{|z|}\right)\le C\, \eta_H\left(\frac{|w|}{|z|}\right)
\]
whenever $|x|\le |w|$, and thus \eqref{estF} follows.

The first part of assertion {\em (4)}  has been proved in \cite[Proposition 2.3]{GM}, while the $C^1_{\rm loc}(\R^N)$ convergence of $F_n$ to $F$ is trivial. Since $DF_n$ are homeomorphisms which are locally uniformly converging to $DF$, the same is true for $DF^{-1}$ thanks to Arens' theorem (see \cite{D}).
Finally, part {(5)} has been proved in \cite[Proposition 2.3-(iv)]{GM}, where it in particular it is shown that if $P_\delta=\big({\rm Id}+\delta\, DF^{-1}\big)^{-1}$, for a.\,e.\,$z\in \R^N$ it holds
\[
\begin{split}
\lambda_{\rm min}(D^2F_\delta(z))&=\frac{\lambda_{\rm min}\big(D^2F(P_\delta(z))\big)}{1+\delta\, \lambda_{\rm min}\big(D^2F(P_\delta(z))\big)}, \\
 \lambda_{\rm max}(D^2F_\delta(z))&=\frac{\lambda_{\rm max}\big(D^2F(P_\delta(z))\big)}{1+\delta\, \lambda_{\rm max}\big(D^2F(P_\delta(z))\big)}.
\end{split}
\]
Since 
\[
\sup_{t\ge 0} \frac{t}{1+\delta\, t}=\frac{1}{\delta},
\]
the claimed bound follows.
\end{proof}

Recall that by a smooth {\em strongly elliptic integrand} we mean an $F\in C^\infty(\R^N)$ such that 
\[
\lambda_{\rm min}(D^2F(z))\ge \lambda, \qquad \lambda_{\rm max}(D^2F(z))\le \Lambda
\]
for some $0<\lambda\le \Lambda<\infty$. Clearly, strongly elliptic integrands are $H$-q.\,u.\,c.\,with $H=\Lambda/\lambda$.

\begin{corollary}\label{cclosure}
Let $H\ge 1$. The cone
\[
{\rm C}_H=\{F:\R^N\to \R \text{ s.\,t.\,$F$ is $H$-q.\,u.\,c.\,or affine}\}
\] 
is closed with respect to point-wise a.\,e.\,convergence and the smooth, strongly elliptic integrands in ${\rm C}_H$ are dense in it. Moreover, point-wise a.\,e.\,convergence of $(F_n)_n$ in ${\rm C}_H$ implies $C^1_{\rm loc}(\R^N)$ convergence of $(F_n)_n$ and $L^1_{\rm loc}(\R^N)$ convergence of $({\rm det}\, D^2F_n)_n$.
\end{corollary}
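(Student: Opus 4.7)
The plan is to prove the three assertions in the statement---closure of ${\rm C}_H$, density of smooth strongly elliptic integrands, and $L^1_{\rm loc}$ convergence of Hessian determinants---by reducing each to the already established properties of $H$-quasiuniformly convex functions collected in Lemma~\ref{lemma25} and Proposition~\ref{proqu}.

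\emph{Closure and $C^1_{\rm loc}$ convergence.} Given $F_n \in {\rm C}_H$ with $F_n \to F$ a.e., convexity of the $F_n$ passes to $F$, and standard facts on convergence of convex functions upgrade a.e. convergence to locally uniform convergence after identifying $F$ with its continuous representative. If $F$ is affine we are done; otherwise, I would normalise each $F_n$ so that its (unique, by Proposition~\ref{proqu}(1)) minimum is at the origin with value zero, so that $DF_n(0)=0$ and each $DF_n$ is $\eta_H$-quasisymmetric at the origin by Proposition~\ref{proqu}(2). Combined with the local $L^{\infty}$ bound on $|DF_n|$ inherited from equi-Lipschitz control of $F_n$ on compacta, the quasisymmetry estimate \eqref{quasisymmetry} yields equicontinuity of $\{DF_n\}$. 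Arzelà--Ascoli plus uniqueness of the pointwise limit at the full-measure set of differentiability points of $F$ give $DF_n \to T$ locally uniformly with $T=DF$ a.e., whence $F \in C^1$ and $DF=T$. The $\delta$-monotonicity inequality for $DF_n$ from Lemma~\ref{lemma25}, with $\delta = 2\sqrt{H}/(H+1)$, passes to $DF$ by pointwise convergence; since $DF$ is non-constant, the converse direction of Lemma~\ref{lemma25} gives $F \in {\rm C}_H$. The $C^1_{\rm loc}$ convergence of $F_n$ to $F$ is precisely what has just been established.

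\emph{Density of smooth strongly elliptic integrands.} For $F \in {\rm C}_H$, I would combine the Moreau--Yosida regularisation $F_\delta$ of Proposition~\ref{proqu}(5) with the mollification scheme of Proposition~\ref{proqu}(4): set
\[
G_{\delta,\varepsilon}(z) = F_\delta * \varphi_\varepsilon(z) + \delta\, |z|^2,
\]
with $\varphi_\varepsilon$ a standard mollifier. By Proposition~\ref{proqu}(4)--(5), $G_{\delta,\varepsilon} \in {\rm C}_H$ is smooth; since mollification is monotone in the Loewner order, the bound $\lambda_{\rm max}(D^2 F_\delta) \le 1/\delta$ survives mollification, giving $2\delta \le \lambda_{\rm min}(D^2 G_{\delta,\varepsilon}) \le \lambda_{\rm max}(D^2 G_{\delta,\varepsilon}) \le 1/\delta + 2\delta$, so $G_{\delta,\varepsilon}$ is strongly elliptic. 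As $F_\delta \to F$ in $C^1_{\rm loc}$ (standard for the Moreau--Yosida regularisation of a convex $C^1$ function) and $F_\delta * \varphi_\varepsilon \to F_\delta$ in $C^1_{\rm loc}$ as $\varepsilon \to 0$ by Proposition~\ref{proqu}(4), a diagonal subsequence converges to $F$ in $C^1_{\rm loc}$, and in particular a.e.

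\emph{$L^1_{\rm loc}$ convergence of $\det D^2 F_n$.} Identify $\det D^2 F_n = J_{DF_n}$, the Jacobian of the quasiconformal homeomorphism $DF_n$. The area formula gives, for every ball $B \Subset \R^N$,
\[
\int_B \det D^2 F_n \, dx = |DF_n(B)|,
\]
and the locally uniform convergence $DF_n \to DF$ established above gives $|DF_n(B)| \to |DF(B)| = \int_B \det D^2 F \, dx$, so the $L^1_{\rm loc}$-norms of $\det D^2 F_n$ converge to that of $\det D^2 F$. The quasiconformal inequality $|D^2 F_n|^N \le C(H,N)\, J_{DF_n}$ combined with the just-obtained uniform $L^1_{\rm loc}$ bound on $J_{DF_n}$ yields uniform $L^N_{\rm loc}$ bounds on $D^2 F_n$, so $DF_n \rightharpoonup DF$ in $W^{1,N}_{\rm loc}$. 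Classical weak continuity of Jacobians under such convergence then gives $J_{DF_n} \rightharpoonup J_{DF}$ in the sense of distributions, and the higher integrability $W^{2,N+\varepsilon}_{\rm loc}$ noted in the remark following Theorem~\ref{imt} upgrades this to weak $L^{1+\varepsilon/N}_{\rm loc}$ convergence of the Jacobians; combined with the convergence of $L^1$-norms, Vitali's theorem delivers strong $L^1_{\rm loc}$ convergence. The main obstacle is this last step, where promoting weak to strong $L^1$ convergence requires simultaneously exploiting the quasiconformal higher integrability (to avoid concentration) and the precise norm identity from the area formula; the closure step is a close second, as the equicontinuity of $\{DF_n\}$ genuinely rests on the quasisymmetric normalisation at the minimum, not on convexity alone.
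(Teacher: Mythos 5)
Your treatment of the first two assertions is essentially the paper's. The closure argument rests, exactly as in the paper, on passing to the limit in the $\delta$-monotonicity inequality and invoking the converse direction of Lemma~\ref{lemma25}; the density argument uses the same construction $F_{\delta}*\varphi_{\eps}+\mu\,|z|^2$ built from Proposition~\ref{proqu}(4)--(5). Two remarks there: the paper outsources the upgrade from a.e.\ to $C^1_{\rm loc}$ convergence to \cite[Lemma 2.5]{KoMa}, whereas you reprove it, and your normalisation of each $F_n$ at its own minimum is both unnecessary and hazardous --- translating each $F_n$ destroys the convergence to $F$ unless the minima converge, and the equicontinuity you need already follows from the quasisymmetry estimate \eqref{quasisymmetry} at \emph{arbitrary} base points $z_0$ combined with the local gradient bounds coming from local equi-Lipschitz control of the $F_n$. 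Also, the paper treats the affine limit/affine $F$ case by a separate explicit construction in the density step; you omit it, and Proposition~\ref{proqu}(4)--(5) as stated do not apply to affine $F$, though your $G_{\delta,\eps}$ does work there after a direct check.

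The genuine gap is in the third assertion. The paper simply cites \cite[Lemma 2.5]{KoMa} for the $L^1_{\rm loc}$ convergence of the Hessian determinants, while your self-contained attempt fails at its final step. Nonnegativity, equi-integrability, weak convergence $J_n\rightharpoonup J$ in $L^{1+\eps/N}_{\rm loc}$, and convergence of $\int_B J_n\,dx$ to $\int_B J\,dx$ for every ball $B$ do \emph{not} imply strong $L^1_{\rm loc}$ convergence: the sequence $J_n=1+\sin(n\,x_1)$ with $J\equiv 1$ satisfies all of these properties and does not converge strongly. Vitali's theorem requires convergence in measure (or a.e.) of the $J_n$, which you have not established and which is precisely the missing ingredient; the Radon--Riesz recovery of strong convergence from weak convergence plus norm convergence is unavailable because it fails in $L^1$ and you control only the $L^1$ norms, not the $L^{1+\eps/N}$ norms. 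Closing the argument requires some pointwise (or strong $L^N_{\rm loc}$) control on $D^2F_n$, which is exactly the content of the cited lemma of Kovalev--Maldonado.
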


\begin{proof}
By \cite[Lemma 2.5]{KoMa} we know that the point-wise limit of q.\,u.\,c.\,functions is either q.\,u.\,c.\,or affine and that the point-wise a.\,e.\,convergence implies the $C^1_{\rm loc}(\R^N)$ one. Therefore it suffices to show that 
\[
F\mapsto \esssup_{z\in \R^N} \frac{\lambda_{\rm max}(D^2F(z))}{\lambda_{\rm min}(D^2F(z))}
\]
is lower semicontinuous with respect to $C^1_{\rm loc}(\R^N)$ convergence on the cone of q.\,u.\,c.\,functions.  Let $F_n\in {\rm C}_H$ be such that $F_n\to F$ in $C^1_{\rm loc}(\R^N)$ with  none of the $F_n$ affine. By Lemma \ref{lemma25}, $DF_n$ is $2\, \sqrt{H}/(H+1)$-monotone hence, by passing to the limit in the definition od $\delta$-monotonicity, so is $DF$. By Lemma \ref{lemma25} again, this implies that $F$ is $H$-q.\,u.\,c., giving the claimed lower semicontinuity. 

To prove the density statement, let $F$ be $H$-q.\,u.\,c.\,for some $H$. For $\delta_n, \eps_n, \mu_n \downarrow 0$, the functions 
\[
F_n(z)=F_{\delta_n}*\varphi_{\eps_n}(z)+\frac{\mu_n}{2}|z|^2
\]
(here $F_{\delta_n}$ denotes the Moreau-Yoshida regularisation of $F$) constructed through Proposition \ref{proqu}, points {\em (4)} and {\em (5)}, are $H$-q.\,u.\,c.\ integrands that approximate $F$ point-wise. Using \eqref{ex3} together with basic properties of convolution and Proposition \ref{proqu}-{\em (5)}, yields  
\[
\lambda_{\rm max}(D^2F_n(z))\le \frac{1}{\delta_n}+\mu_n,\qquad \lambda_{\rm min}(D^2F_n(z))\ge \mu_n
\]
for all $z\in \R^N$, hence $F_n$ are strongly elliptic integrands. 

If instead $F$ is an affine function of the form $F(z)=(w, z)+c$  for fixed $w\in \R^N$ and $c\in \R$, we can consider $G(z)=|z|^2/2$ and set
\[
F_n(z)=n\big(G(w+z/n)-G(w)\big)+c, \qquad DF_n(z)=DG(w+z/n), \qquad D^2F_n=\frac{{\rm Id}}{n}.
\]
Clearly $F_n\to \ell$ in $C^1_{\rm loc}(\R^N)$ and  the $F_n$ are $1$-q.\,u.\,c.\,and strongly elliptic. The last stated property on the $L^1$-convergence of the determinants is contained in \cite[Lemma 2.5]{KoMa}.
\end{proof}

Actually (see again \cite[Lemma 2.5]{KoMa}), the pointwise convergence in the previous corollary can be weakened to hold on a dense subset of $\R^N$.

\begin{remark}
The previous corollary, as outlined in its proof, implies the lower semicontinuity of the maximal linear dilatation of monotone, quasiconformal gradient maps. Lower semicontinuity of the maximal linear dilatation in the larger class of quasiconformal maps holds true for $N=2$, but fails for $N\ge 3$, due to a famous counterexample of Iwaniec \cite{I1}. 
\end{remark}

We conclude this section by introducing a generalisation of the function $\eta_H$ given in \eqref{defetaH}, which will appear in many subsequent computations. Given $a, b>0$, the auxiliary functions 
\[
\eta_{a, b}(t)=\max\bigl\{t^a, t^b\bigr\}, \qquad t\ge 0,
\]
 are increasing homeomorphisms of $\R_+$ to itself, and it holds $\eta_H=\eta_{H, 1/H}$.
Since  
\[
\max\bigl\{t^a, t^b\bigr\}=s\quad \Longleftrightarrow\quad t=\min\bigl\{s^{1/a}, s^{1/b}\bigr\}, \qquad \forall \, a, b, t, s>0,
\]
we see that 
\beq
\label{eta-inv}
\eta_{a, b}^{-1}(t)=\min\bigl\{t^{1/a}, t^{1/b}\bigr\}.
\eeq
We collect in the following proposition some elementary properties of the function $\eta_{a, b}$ and of its inverse.
\begin{proposition}
\label{eta-lemma}
Let $a, b>0$. Then,
\begin{enumerate}
\item
For all $s, t\ge 0$
\beq
\label{basiceta}
\eta_{a, b} (s\, t)\le \eta_{a, b}(s)\, \eta_{a, b}(t) \quad \text{as well as} \quad 
\eta_{a, b}^{-1}(s\, t)\ge \eta_{a, b}^{-1}(s)\, \eta_{a, b}^{-1}(t)
\eeq
\item
For all $t>0$
\beq
\label{eta01}
\eta_{a, b}^{-1}(t)\, \eta_{1/a, 1/b}(1/t)= 1.
\eeq
\item
For all $t>0$ and $\sigma>0$ it holds
	\beq
	\label{dis-eta}
	\frac{1}{C}\,  \eta_{a, b}(t) \le \eta_{a, b}(\sigma\, t) \le C\,  \eta_{a, b}(t),
	\eeq
for positive constants $C=C(a, b, \sigma)$. The same estimate holds true for $\eta_{a, b}^{-1}$ as well.
\item
For $a> b>0$ and $ c>d>0$
\beq
\label{compo}
\eta_{a, b}\circ \eta_{c, d}=\eta_{ac , bd}
\eeq
and the same formula holds true for the inverses.
\end{enumerate}
\end{proposition}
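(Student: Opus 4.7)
The plan is to establish all four assertions by direct elementary manipulations based on the explicit formulas $\eta_{a,b}(t) = \max\{t^a, t^b\}$ and $\eta_{a,b}^{-1}(t) = \min\{t^{1/a}, t^{1/b}\}$ recorded in \eqref{eta-inv}. None of the four items requires more than real-analysis bookkeeping; only (4) calls for a case split, and that is the only place where the ordering hypothesis is genuinely used.

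For (1), I would observe that $(st)^a$ and $(st)^b$ both lie in the four-element set $\{s^i t^j : i,j \in \{a,b\}\}$, whose maximum is exactly $\max\{s^a,s^b\}\max\{t^a,t^b\} = \eta_{a,b}(s)\,\eta_{a,b}(t)$, yielding the first inequality in \eqref{basiceta}. The second inequality follows either by the dual min-computation (the LHS is a min over two elements while the RHS is a min over all four products $s^{1/i}t^{1/j}$) or by changing variables $s \mapsto \eta_{a,b}^{-1}(s)$, $t \mapsto \eta_{a,b}^{-1}(t)$ in the first inequality and applying the increasing homeomorphism $\eta_{a,b}^{-1}$. Item (2) is a one-line identity:
\[
\eta_{1/a,1/b}(1/t)=\max\{t^{-1/a},t^{-1/b}\}=\frac{1}{\min\{t^{1/a},t^{1/b}\}}=\frac{1}{\eta_{a,b}^{-1}(t)}.
\]
For (3), I would apply (1) with $s=\sigma$ to get $\eta_{a,b}(\sigma t)\le \eta_{a,b}(\sigma)\,\eta_{a,b}(t)$, and then apply (1) again to $\eta_{a,b}(\sigma^{-1}\cdot\sigma t)\le \eta_{a,b}(\sigma^{-1})\,\eta_{a,b}(\sigma t)$ to obtain the reverse bound; the constant $C:=\max\{\eta_{a,b}(\sigma),\eta_{a,b}(\sigma^{-1})\}$ then covers both sides. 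The same argument based on the dual submultiplicativity in (1) handles $\eta_{a,b}^{-1}$.

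The only item requiring genuine attention is (4). Under $a>b>0$ and $c>d>0$, one has the piecewise descriptions $\eta_{a,b}(s)=s^a$ for $s\ge 1$ and $\eta_{a,b}(s)=s^b$ for $s\le 1$, and similarly for $\eta_{c,d}$; crucially, the hypotheses force $ac>bd$, so that $\eta_{ac,bd}(t)$ has the analogous piecewise structure, equaling $t^{ac}$ for $t\ge 1$ and $t^{bd}$ for $t\le 1$. Splitting accordingly, for $t\ge 1$ one computes $\eta_{c,d}(t)=t^c\ge 1$, hence $\eta_{a,b}(\eta_{c,d}(t))=(t^c)^a=t^{ac}$; the case $t\le 1$ is symmetric and produces $t^{bd}$. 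This verifies \eqref{compo}. For the inverse statement I would either invoke $(\eta_{a,b}\circ\eta_{c,d})^{-1}=\eta_{c,d}^{-1}\circ\eta_{a,b}^{-1}$ together with a short check that $\eta_{a,b}^{-1}$ and $\eta_{c,d}^{-1}$ commute under the ordering hypotheses, or run the identical two-case analysis with the formula from \eqref{eta-inv}. The only (mild) obstacle worth flagging is precisely keeping track of the fact that the ordering $a>b$, $c>d$ is what guarantees $ac>bd$, so that the piecewise structures of the two sides of \eqref{compo} match up cleanly; beyond this, the proposition is routine.
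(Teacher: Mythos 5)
Your proof is correct, and since the paper explicitly omits the argument as elementary, your write-up supplies exactly the intended routine verification: submultiplicativity via the four-product comparison, the one-line identity for (2), the doubling bound from (1), and the two-case piecewise computation for (4) with the key observation that $a>b$, $c>d$ force $ac>bd$ so the piecewise structures match. No gaps.
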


The proof of these facts is elementary and is therefore omitted.

 \section{Local minimisers and Sobolev regularity of their stress field}
 \label{3}
\begin{definition}
\label{def:local-min}

Let $\Om \subset\RN$ be open. For all $\mathcal O \Subset \Om$ let us consider the functionals
  $J\colon W^{1,1} ({\mathcal O}) \to \R$ given by
	\beq
	\label{J}
	J(w, {\mathcal O})= \int_{\mathcal O} F(Dw) \; dx.
	\eeq
We say that a function $u \in W^{1,1}_{\rm loc}(\Om)$ is a local minimiser of $J$ {in $\Om$}  if $F(Du)\in L^1_{\rm loc}(\Omega)$ and for any open ${\mathcal O} \Subset\Om$ it holds
	\[
	J(u, {\mathcal O})= \inf\l\{J(w, {\mathcal O}): \, w \in u+ W^{1,1}_0({\mathcal O})\r\}.
	\]
\end{definition}
If $F$ is $C^1$, strictly convex and coercive, let $\{\bar z\}={\rm Argmin}\, (F)$. It is readily checked that any local minimiser $u$ of $J$ defines a local minimiser $\tilde{u}(x)=u(x)-(\bar z, x)$ for the functional $\tilde{J}$ given as in \eqref{J}, but with $\tilde{F}(z)$ defined as in \eqref{tildeF}. In particular, we can assume that \eqref{hypF} holds true, i.\,e.\,that $\bar z=0$. If $F$ is q.\,u.\,c., which will be assumed henceforth, using \eqref{estF} for $\bar z=0$ gives
\[
F(z)\ge \frac{1}{C}\,  F(w)\, |z|\, \eta^{-1}_H(|z|)\qquad \forall \, |w|=1,
\]
so that for all $|z|>1$ it holds
\begin{equation}
\label{cc}
F(z)\ge\frac{1}{C} \sup_{|w|=1} F(w) \, |z|^{1+\frac{1}{H}}.
\end{equation}
In particular, any local minimiser for $J$ in $\Omega$ belongs to $W^{1, 1+\frac{1}{H}}_{\rm loc}(\Omega)$.

More precisely, despite the fact that q.\,u.\,c.\,integrands are {\em per se} anisotropic, they enjoy, together with their derivative, an isotropic growth control allowing to treat minimisation problems for $J$ in standard Orlicz-Sobolev spaces. Indeed, after normalising $F$ so that \eqref{hypF} holds true, we can let for $t\ge 0$
  \[
 a(t)=\sup_{|z|\le t} |DF(z)|, \qquad   A(t)=\int_0^ta(\tau)\, d\tau.
  \]
  Notice that $a$ is continuous on $[0, \infty[$ and positive for $t>0$. Given $0\le t_1<t_2$, since $DF$ is a homeomorphism of $\R^N$, $DF(B_{t_2})$ is an open neighbourhood of the closed set $DF(\overline{B_{t_1}})$, thus $a(t_2)>a(t_1)$, i.\,e.\,$a$ is strictly increasing. 
   By the quasisymmetry of $DF$, choosing $z_0=0$ and $|w|=t$ in  \eqref{quasisymmetry}, we obtain that for any $z\in \R^N$  it holds
  \[
  |DF(z)|\le C\, a(t) \, \eta_H(|z|/t), 
  \]
  so that for any $k\ge 0$, $t\ge 0$, the monotonicity of $\eta_H$ grants  
  \beq
  \label{ak}
  a(k\, t)\le C\, \eta_H(k)\, a(t).
  \eeq
    Therefore $A$ is a Young function, i.\,e. it is convex, increasing and fulfils
   \[
   \lim_{t\to 0^+ }\frac{A(t)}{t}=0, \qquad \lim_{t\to \infty} \frac{A(t)}{t}=\infty,
   \]
where the second equality follows from \eqref{ak} for $k=1/t$ and using $\eta_H(1/t)^{-1}=t^{1/H}$ for $t\ge 1$.
The inverse of $a$ is readily checked to be
  \[
  a^{-1}(s)=\inf_{|z|\ge s} |DF^{-1}(z)|
  \] 
  which obeys a similar estimate as \eqref{ak}.
It follows that $A$ satisfies the $\Delta_2$ condition
\[
A(2\, t)\le C\, A(t)\qquad \forall \, t\ge 0
\]
for $C=C(H, N)$, and the same holds true for its Young conjugate defined as
\[
A^*(s)=\sup_{t\ge 0} \left(s\, t -A(t)\right)=\int_0^s a^{-1}(\sigma)\, d\sigma
\]
 (see the proof of \cite[eq. (16.94)-(16.95)]{AIMbook}). 

Clearly
  \[
  F(z)= \int_0^{|z|} (DF(s\, z/|z|), z/|z|)\, ds\le \int_0^{|z|} a(s)\, ds=A(|z|)
  \]
  while the $\delta$-monotonicity and quasisymmetry of $DF$ give
  \[
  F(z)\ge \delta\, \int_0^{|z|} |DF(s\, z/|z|)|\, ds\ge \frac{\delta}{C}\, \int_0^{|z|} a(s)\, ds,
  \]
  so that
    \beq
\label{cPhi}
\frac{1}{C}\, A(|z|)\le F(z)\le C\, A(|z|).
\eeq
The map $DF$ enjoys similar isotropic bounds, namely,
\[
\frac{1}{C}\, a(|z|)\le |DF(z)|\le a(|z|),
\]
where the first inequality follows from \eqref{quasisymmetry} as before, and it is furthermore possible to show that
    \beq
\label{cDPhi}
\frac{1}{C}\, A(|z|)\le A^*(DF(z))\le C\, A(|z|)
\eeq
(see the proof of \cite[eq. (16.106)]{AIMbook}).

Given an open set ${\mathcal O}\Subset\Omega$, it follows from \eqref{cPhi} that 
\[
J(u, {\mathcal O})<\infty\quad \text{if and only if}\quad A(|Du|)\in L^1({\mathcal O}).
\]
Using the fact that ${\mathcal O}$ has finite measure and $A$ satisfies the $\Delta_2$ condition, the summability of $A(|Du|)$ can be equivalently stated as 
\[
u\in W^{1, A}({\mathcal O}):=\left\{v\in W^{1,1}({\mathcal O}): A(|v|), A(|Dv|)\in L^1({\mathcal O})\right\},
\]
equipped with the Luxembourg norm
\[
\|v\|_{L^A({\mathcal O})}+\||Dv|\|_{L^A({\mathcal O})}, \qquad \|g\|_{L^A({\mathcal O})}=\inf\left\{s>0:\int_{\mathcal O}A(|g|/s)\, dx\le 1\right\}.
\]
Thanks to the $\Delta_2$ condition on $A$, the so-called Orlicz-Sobolev space $W^{1,A}({\mathcal O})$ so defined turns out to be a Banach space. 
 Existence of local minimiser given, say, a boundary datum $\varphi \in W^{1, A}(\Omega)$ for $\Omega$ bounded is thus granted by minimising $J$  on $u+W^{1,A}_0(\Omega)$, the latter space being the closed vector space of  those $v\in W^{1, A}(\Omega)$ whose extension at zero outside $\Omega$ belongs to $W^{1,1}(\R^N)$. Indeed, thanks to the validity of the $\Delta_2$ condition on both $A$ and $A^*$, the Banach space $W^{1,A}_0(\Omega)$ is reflexive by \cite[p. 54]{DT}, while $J$ is convex and coercive by \eqref{cPhi}. Conversely, a local minimiser $u$ on $\Omega$ minimises $J(\cdot \,, {\mathcal O})$ on $u+W^{1, A}_0({\mathcal O})$  for any ${\mathcal O}\Subset \Omega$ and thus standard methods (see 
\cite[Theorem 2.1]{DG}) ensure the validity of the Euler-Lagrange equation
 \beq
 \label{weakDF}
 \int_{\mathcal O} (DF(Du), Dw)\, dx= 0 \qquad \forall \, w\in W^{1,A}_0({\mathcal O}).
 \eeq
Notice that for any $u\in W^{1, A}({\mathcal O})$ the inequalities in \eqref{cDPhi} imply that $|DF(Du)|\in L^{A^*}({\mathcal O})$ (thus {\em a fortiori} $|DF(Du)|\in L^1({\mathcal O})$) and therefore  H\"older's inequality in Orlicz spaces ensures that 
  \[
  W^{1,A}_0({\mathcal O})\ni w\mapsto \int_{\mathcal O}(DF(Du), Dw)\, dx
  \]
  is a well defined continuous linear functional. Thus, if for ${\mathcal O}\Subset\Omega$ a function $u\in W^{1,A}({\mathcal O})$ fulfils
  \[
  {\rm div}\, (DF(Du))=0
  \]
  in the distributional sense, it does so also in the weak sense \eqref{weakDF}, thanks to the density of  $C^\infty_0({\mathcal O})$ in $W^{1, A}_0({\mathcal O})$ granted  by \cite[Theorem 2.1]{DT} (here   the $\Delta_2$ condition on $A$ alone is sufficient).
  
  We summarise the previous discussion in the following proposition.

\begin{proposition}\label{minimi}
Let $N\ge 2$ and $F$ be a q.\,u.\,c.\,integrand. A function $u\in W^{1,1}_{\rm loc}(\Omega)$ is a local minimiser for $J$ in \eqref{J} if and only if $F(Du)\in L^1_{\rm loc}(\Omega)$ and 
\[
{\rm div}\, (DF(Du))=0
\]
in the distributional sense.
\end{proposition}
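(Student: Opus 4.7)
\textbf{Plan of proof for Proposition \ref{minimi}.} The statement has two implications and, pleasantly, nearly all of the analytical facts we need are already collected in the preceding discussion; the proof is mostly an orchestration of those facts. The setting is the Orlicz--Sobolev framework based on the Young function $A$ introduced through $a(t)=\sup_{|z|\le t}|DF(z)|$, and the two crucial inputs are the two-sided bounds
\[
\tfrac{1}{C}\,A(|z|)\le F(z)\le C\,A(|z|), \qquad \tfrac{1}{C}\,A(|z|)\le A^*(DF(z))\le C\,A(|z|),
\]
the $\Delta_2$ condition on both $A$ and $A^*$, and the resulting density of $C^\infty_c({\mathcal O})$ in $W^{1,A}_0({\mathcal O})$.

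\textbf{Direction (minimiser $\Rightarrow$ PDE).} Let $u$ be a local minimiser and ${\mathcal O}\Subset\Omega$. Given $w\in C^\infty_c({\mathcal O})$, consider the map $\varphi(t)=J(u+tw,{\mathcal O})$. By the convexity of $F$, $\varphi$ is convex, and by the local minimality of $u$ it attains its minimum at $t=0$. To conclude that $\varphi'(0)=0$ gives the distributional Euler--Lagrange equation, I would first verify differentiation under the integral sign. The integrand $t\mapsto F(Du+tDw)$ is convex in $t$, while the difference quotient $(F(Du+tDw)-F(Du))/t$ is monotone in $t$ and bounded, uniformly in $|t|\le 1$, by $(DF(Du+Dw),Dw)_+\in L^1({\mathcal O})$, since $|DF(Du+Dw)|\in L^{A^*}({\mathcal O})$ by the second two-sided bound above while $|Dw|\in L^\infty$ is compactly supported. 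Monotone/dominated convergence then yields $\int_{\mathcal O}(DF(Du),Dw)\,dx=0$, which is precisely ${\rm div}(DF(Du))=0$ distributionally.

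\textbf{Direction (PDE $\Rightarrow$ minimiser).} Suppose $F(Du)\in L^1_{\rm loc}(\Omega)$ and the PDE holds distributionally. Fix ${\mathcal O}\Subset\Omega$ and a competitor $v=u+w$ with $w\in W^{1,1}_0({\mathcal O})$. If $F(Dv)\notin L^1({\mathcal O})$, then $J(v,{\mathcal O})=+\infty$ and the comparison is trivial, so assume $F(Dv)\in L^1({\mathcal O})$. From $F(Du),F(Dv)\in L^1({\mathcal O})$ and the first two-sided bound, both $|Du|$ and $|Dv|$, hence $|Dw|$, lie in $L^A({\mathcal O})$; since $w$ already extends by zero to a $W^{1,1}(\R^N)$ map, this places $w$ in $W^{1,A}_0({\mathcal O})$. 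By the density of $C^\infty_c({\mathcal O})$ in $W^{1,A}_0({\mathcal O})$ there is a sequence $\varphi_n\to w$ in the Luxembourg norm; H\"older's inequality in Orlicz spaces, together with $|DF(Du)|\in L^{A^*}({\mathcal O})$, gives
\[
\int_{\mathcal O}(DF(Du),D\varphi_n)\,dx\longrightarrow \int_{\mathcal O}(DF(Du),Dw)\,dx.
\]
The left-hand side vanishes by hypothesis, so the right-hand side does too. Using convexity of $F$ pointwise, $F(Dv)\ge F(Du)+(DF(Du),Dw)$, and integrating produces $J(v,{\mathcal O})\ge J(u,{\mathcal O})$.

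\textbf{Main technical step.} The only non-bookkeeping point is the justification that any admissible competitor $w\in W^{1,1}_0({\mathcal O})$ with $F(Dv)\in L^1({\mathcal O})$ actually belongs to $W^{1,A}_0({\mathcal O})$, so that the distributional equation can be tested against it via smooth approximation. This relies on the $\Delta_2$ property of $A$ (which makes $W^{1,A}_0({\mathcal O})$ the $W^{1,A}$-closure of $C^\infty_c({\mathcal O})$) and on the consistency of the two zero-boundary conventions, both already invoked in the text via the references to \cite{DT}. Once this is in hand, the rest is a direct assembly of the isotropic bounds, H\"older in Orlicz spaces, and pointwise convexity.
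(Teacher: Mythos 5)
Your proposal is correct and follows essentially the same route as the paper, whose ``proof'' is precisely the preceding discussion in Section \ref{3}: the Orlicz--Sobolev framework built from $A$, the two-sided bounds \eqref{cPhi}--\eqref{cDPhi}, the $\Delta_2$ conditions, density of $C^\infty_c({\mathcal O})$ in $W^{1,A}_0({\mathcal O})$, and pointwise convexity. The only (harmless) difference is that for the direction ``minimiser $\Rightarrow$ PDE'' the paper cites \cite[Theorem 2.1]{DG}, whereas you reprove that step directly via monotone difference quotients and Orlicz--H\"older domination.
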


The next proposition is essentially contained in \cite[Theorem 3.3]{GM}, but we will need the more explicit estimates given in its present form. It revolves around the Sobolev regularity of the {\em stress field} 	
\beq
	\label{def-V}
	V(x)= DF(Du(x)).
	\eeq

\begin{proposition}\label{prosob}
Let $F$ be a $H$-q.\,u.\,c.\,function obeying \eqref{hypF} and let $i_F$ be as in \eqref{defif}.
Let $u$ be a local minimiser for $J$, and let  $V$ be given by \eqref{def-V}. Then, $V\in W^{1,2}_{\rm loc}(\Omega)$ with estimates
\begin{subequations}
\label{proso}
\begin{align}
\left(\intmed_{B_R}|DV|_2^2\, dx\right)^{\frac{1}{2}}&\le \frac{C}{R} \, i_F\, \eta_{\frac{H}{H+1}, \frac{1}{H+1}}\left(\frac{1}{i_F}\intmed_{B_{2R}} F(Du)\, dx\right), \label{proso1}\\
\left(\intmed_{B_R}|V|^2\, dx\right)^{\frac{1}{2}}&\le C\, i_F\, \eta_{\frac{H}{H+1}, \frac{1}{H+1}}\left(\frac{1}{i_F}\intmed_{B_{2R}} F(Du)\, dx\right), \label{proso2}
\end{align}
\end{subequations}
for a positive constant $C=C(N, H)$ and all balls $B_R$ such that $B_{2R}\subseteq \Omega$. 
\end{proposition}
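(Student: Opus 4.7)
My plan is: (i) approximate by smooth strongly elliptic $H$-q.u.c.\ integrands via Corollary~\ref{cclosure}, reducing to $F\in C^\infty$ and $u\in C^3_{\rm loc}$; (ii) establish a vectorial Caccioppoli-type estimate for $V$ from the cone inclusion $DV\in\mathcal{K}_H$; (iii) upgrade to \eqref{proso2} by combining a Moser iteration with the isotropic bounds of Section~\ref{3}. Since the constants in \eqref{proso} depend only on $N$ and $H$, they pass to the limit.

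For step (ii), differentiating $\Div V=0$ with respect to $x_\alpha$ yields $\Div(\partial_\alpha V)=0$. Testing against $\eta^2 V^\alpha$, summing over $\alpha$, and integrating by parts using $\sum_i\partial_{i\alpha}V^i=\partial_\alpha\Div V=0$ gives
\[
\int\eta^2\,\Tr(DV\cdot DV)\,dx \;=\; -2\int\eta\,\sum_{\alpha,i}V^\alpha\,\partial_i\eta\,\partial_\alpha V^i\,dx.
\]
Since $DV=D^2F(Du)\cdot D^2u$ is the product of a symmetric positive-definite matrix with eigenvalue ratio $\le H$ and the symmetric matrix $D^2u$, working in an eigenbasis of $D^2F(Du)$ produces the cone bound
\[
\Tr(DV\cdot DV)=\sum_{i,j}\lambda_i\lambda_j\,(D^2u)_{ij}^2 \;\ge\; \frac{1}{H}\sum_{i,j}\lambda_i^2\,(D^2u)_{ij}^2 = \frac{|DV|_2^2}{H}.
\]
Cauchy--Schwarz on the right-hand side, followed by Young's inequality to absorb, gives $\int\eta^2|DV|_2^2\,dx\le C_H\int|D\eta|^2|V|^2\,dx$, and a standard cutoff yields the Caccioppoli estimate $\int_{B_R}|DV|_2^2\,dx\le (C_H/R^2)\int_{B_{2R}}|V|^2\,dx$, so \eqref{proso1} follows from \eqref{proso2}. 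For non-smooth $u$ the formal calculation is justified via the difference-quotient method using the $\delta$-monotonicity of $DF$ from Lemma~\ref{lemma25}, as in \cite[Theorem~3.3]{GM}, which simultaneously gives $V\in W^{1,2}_{\rm loc}(\Omega)$.

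For step (iii), the isotropic bounds \eqref{cPhi}--\eqref{cDPhi} and the quasisymmetry $a(kt)\le C\,\eta_H(k)\,a(t)$, combined with the parametrisation $a(t)\sim t^\alpha$ for some $\alpha\in[1/H,H]$ and $A(t)\sim t\,a(t)$, produce the pointwise bound
\[
|V| \;\le\; C\,i_F\,\eta_{H/(H+1),\,1/(H+1)}\!\left(F(Du)/i_F\right).
\]
Since each of the two branches of $\eta_{H/(H+1),1/(H+1)}$ is a concave power, Jensen's inequality gives
\[
\intmed_{B_{2R}}|V|\,dx \;\le\; C\,i_F\,\eta_{H/(H+1),\,1/(H+1)}\!\left(i_F^{-1}\intmed_{B_{2R}} F(Du)\,dx\right).
\]
To lift this $L^1$ bound to the $L^2$ bound required in \eqref{proso2}, I would extend the cone-based Caccioppoli to the family $|V|^q$ by testing $\Div(\partial_\alpha V)=0$ against $\eta^2 V^\alpha|V|^{2(q-1)}$ (the cone bound again supplying the positive sign in the principal term) and run a standard Moser iteration with Sobolev embeddings, obtaining $\sup_{B_R}|V|\le C\intmed_{B_{2R}}|V|\,dx$. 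Combining the sup bound with the $L^1$ estimate yields \eqref{proso2} and hence, via the Caccioppoli, also \eqref{proso1}. The main obstacle is executing the Moser iteration cleanly in the vectorial setting while keeping constants dependent only on $N$ and $H$; the key point is that the cone identity $|DV|_2^2\le H\,\Tr(DV\cdot DV)$ survives the weighting by $|V|^{2(q-1)}$ after expanding $D(\eta^2V^\alpha|V|^{2(q-1)})$ and carefully absorbing the cross terms.
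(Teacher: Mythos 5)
Your steps (i) and (ii) match the paper's route (the Caccioppoli inequality via the cone bound $|DV|_2^2\le H\,\Tr(DV\cdot DV)$ is exactly Lemma \ref{lemma51} applied to $DV=D^2F(Du)\,D^2u$), and your derivation of the $L^1$ bound
\[
\intmed_{B_{2R}}|V|\,dx\le C\, i_F\,\eta_{\frac{H}{H+1},\frac{1}{H+1}}\Bigl(i_F^{-1}\intmed_{B_{2R}}F(Du)\,dx\Bigr)
\]
via the pointwise estimate $|DF(z)|\sim F(z)/|z|$ plus Jensen is a correct alternative to the paper's argument (which instead splits $B_{2R}$ into $\{|Du|\le t\}$ and $\{|Du|>t\}$ and optimises over $t$). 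The problem is the final upgrade from $L^1$ to $L^2$. You do not need, and cannot get by your method, the sup bound $\sup_{B_R}|V|\le C\intmed_{B_{2R}}|V|$. What suffices --- and what the paper does in Corollary \ref{dato} --- is to interpolate: by Gagliardo--Nirenberg, $\int_{B_r}|V|^2\,dx\le(\eps r)^2\int_{B_r}|DV|^2\,dx+C(\eps r)^{-N}\bigl(\int_{B_r}|V|\,dx\bigr)^2$, which fed back into the Caccioppoli inequality and combined with the standard absorption lemma \cite[Lemma 3.1, Ch.\ 5]{Gia} yields $\|DV\|_{L^2(B_R)}\le C R^{N/2-1}\intmed_{B_{2R}}|V|\,dx$; then Poincar\'e gives the $L^2$ bound on $V$. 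That closes the proof with only the ingredients you already have.

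The Moser iteration you propose instead has a genuine gap, and it is precisely the gap the authors flag when they say they were \emph{not} able to prove boundedness of solutions of the differential inclusion \eqref{i05}. Testing $\partial_i(\partial_\alpha V^i)=0$ against $\eta^2 V^\alpha|V|^{2(q-1)}$ produces, besides the good principal term $\int\eta^2|V|^{2(q-1)}\Tr(DV\cdot DV)\,dx$, the first-order cross term
\[
2(q-1)\int\eta^2|V|^{2(q-2)}\bigl(DV\,V,\;DV^t\,V\bigr)\,dx,
\]
which is of the \emph{same order} as the principal term and therefore must carry a sign rather than be absorbed. In the scalar Moser scheme the analogous term is $(A\,Dv,Dv)\,v^{2q-2}\ge0$ by ellipticity; here, writing $DV=PS$ with $P=D^2F(Du)$ symmetric positive and $S=D^2u$ symmetric, one gets $(DV\,V,DV^t\,V)=V^t(SP)^2V=(Q\,P^{-1/2}V,\,Q\,P^{1/2}V)$ with $Q=P^{1/2}SP^{1/2}$, a pairing of $Q$-images of two \emph{different} vectors, which is indefinite: the cone condition $DV\in\mathcal K_H$ gives no control over it. This is why the paper abandons powers of $|V|$ and instead builds the De Giorgi scheme on the level sets $\{F(Du)>k\}$ through the Minkowski functionals $g_k$, where the sign of the boundary term comes from the convexity of $F$ (Lemma \ref{caclemma}). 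For the present proposition, simply replace your step (iii) iteration by the interpolation argument above.
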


\begin{proof}
 From \cite[Theorem 3.3, (3.10)]{GM} we know that
 \[
 \|V\|_{W^{1,2}(B_R)}\le C \|V\|_{L^1(B_{2R})}
 \]
 for a constant $C=C(N, H, R)$. More precisely, setting
 \beq
 \label{mr}
 m_{r}=\intmed_{B_{r}} |V|\, dx,
 \eeq
  it holds (see \cite[proof of Theorem 3.2]{GM} {or Corollary \ref{dato} below})
\beq
\label{re001}
\| D V\|_{L^2(B_R)}\le C(N, H)\, R^{\frac{N}{2}-1} \, m_{2R}
\eeq
for a constant $C$ which henceforth depends only on $H$ and $N$. On the other hand, by Poincar\'e inequality and this last estimate we have
 \beq
 \label{re002}
 \begin{split}
 \|V\|_{L^2(B_{R})}&\le \|V-m_{R}\|_{L^2(B_{R})}+ (\omega_N\, R^N)^{\frac{1}{2}}m_{R}\\
 &\le C\, R\, \|DV\|_{L^2(B_{R})} +C\, R^{\frac{N}{2}}\, m_{2R}\\
 &\le C\, R^{\frac{N}{2}}\, m_{2R}.
 \end{split}
 \eeq
We claim that
\beq
\label{re0}
m_{2R} \le C\,  i_F\, \eta_{\frac{H}{H+1}, \frac{1}{H+1}}\left(\frac{1}{i_F}\intmed_{B_{2R}} F(Du)\, dx\right)
\eeq
for a constant $C=C(N, H)$.
To this aim, for any $t>0$ we proceed as follows
  \beq
  \label{re}
  \begin{split}
  \int_{B_{2R}} |DF(Du)|\, dx&=\int_{\{|Du|\le t\}\cap B_{2R}} |DF(Du)|\, dx+\int_{\{|Du|> t\}\cap B_{2R}} |DF(Du)|\, dx\\
  & \le C\, \sup_{|z|\le t} |DF(z)|\, R^N+\frac{1}{t} \int_{B_{2R}}|DF(Du)|\, |Du|\, dx.
  \end{split}
  \eeq
The first term on the above right-hand side is estimated by using the quasisymmetry of $DF$ \eqref{defqs} as
\[
|DF(z)|\le C\, \inf_{|w|=1} |DF(w)|\, \eta_H(|z|),
\]
while on the second term we use \eqref{deltamon2}. Therefore \eqref{re} reduces to
\[
\int_{B_{2R}} |DF(Du)|\, dx\le C\, R^N\, i_F\,  \left(  \eta_H(t)+\frac{1}{t} \, \intmed_{B_{2R}} \frac{F(Du)}{i_F}\, dx\right).
\]
Choose $t$ such that 
\[
t\,  \eta_H(t)=  \intmed_{B_{2R}} \frac{F(Du)}{i_F}\, dx\quad \Longleftrightarrow \quad t=\eta_{1+\frac{1}{H}, 1+H}^{-1}\left(\intmed_{B_{2R}} \frac{F(Du)}{i_F}\, dx\right)
 \]
 to get through \eqref{eta01}
 \[
 \int_{B_{2R}} |DF(Du)|\, dx\le C\, R^N\, i_F\,  \eta_{\frac{1}{H+1}, \frac{H}{H+1}}\left(\left(\intmed_{B_{2R}} \frac{F(Du)}{i_F}\, dx\right)^{-1}\right) \intmed_{B_{2R}} \frac{F(Du)}{i_F}\, dx.\\
 \]
 Finally, notice that
 \[
t\,  \eta_{\frac{1}{H+1}, \frac{H}{H+1}}(1/t)= \eta_{\frac{1}{H+1}, \frac{H}{H+1}}(t)
\]
for all $t\ge 0$, so that taking \eqref{mr} and \eqref{def-V} into account implies the validity of  \eqref{re0}.
Recalling  \eqref{re001} and \eqref{re002} finally gives \eqref{proso1} and \eqref{proso2}, respectively.
\end{proof}

 \section{A regularising procedure}
\label{4}
 As customary, it will be convenient to work under the assumption that both the integrand and the minimiser under scrutiny are smooth. In order to do so, we introduce a family of regularised problems having the required smoothness and whose integrands and corresponding minimisers  converge to the original problem.

 \begin{lemma}[Regularised problems]\label{regularized}
 Let $F$ be a $H$-q.\,u.\,c.\,function fulfilling \eqref{hypF} and let $u$ be a local minimiser for $J$ in $W^{1,1}(\Omega)$. Then, for any ball $B_{2R}\Subset \Omega$ there exists a sequence $F_n\in C^\infty(\R^N)$ of $H$-q.\,u.\,c. integrands obeying \eqref{hypF} and 
 \[
 \inf_{z\in \R^N} \lambda_{\rm min} (D^2F_n(z))>0
 \]
  and a sequence of corresponding minimisers $u_n\in C^{\infty}(B_{2R})$  of 
 \[
 J_n(w, B_{2R})=\int_{B_{2R}} F_n(Dw)\, dx
 \]
such that
 \begin{subequations}
  \label{prosobe}
\begin{align}
 \int_{B_{2R}} F_n(Du_n)\, dx &\to \int_{B_{2R}}F(Du)\, dx,  \label{prosobe1}\\
 u_n &\rightharpoonup u \quad \text{in $W^{1, 1+1/H}(B_R)$},  \label{prosobe2} \\
 DF_n(Du_n) &\rightharpoonup DF(Du) \quad \text{in $W^{1,2}(B_R)$}.  \label{prosobe3}
\end{align}
 \end{subequations}
 \end{lemma}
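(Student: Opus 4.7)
I would build $F_n$ exactly as in the density part of Corollary \ref{cclosure}: first form the Moreau--Yoshida regularisation $F_{\delta_n}$ (which, by Proposition \ref{proqu}-\textit{(5)}, keeps the $H$-q.u.c.\ property and forces $\lambda_{\rm max}(D^2 F_{\delta_n})\le 1/\delta_n$), then convolve against a standard mollifier to get $F_{\delta_n}*\varphi_{\epsilon_n}\in C^\infty(\R^N)$, and finally add $\mu_n|z|^2/2$ to guarantee $\lambda_{\rm min}(D^2 F_n)\ge\mu_n>0$. By parts \textit{(4)} and \textit{(5)} of Proposition \ref{proqu} each $F_n$ is $C^\infty$, $H$-q.u.c., strongly elliptic, $F_n\to F$ in $C^1_{\rm loc}(\R^N)$, and after a harmless translation and subtraction of a constant we may enforce \eqref{hypF}. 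Since $B_{2R}\Subset\Omega$, for $n$ large the convolution $\tilde u_n:=u*\rho_{\epsilon_n}$ lies in $C^\infty(\overline{B_{2R}})$ and $\tilde u_n\to u$ in the Orlicz--Sobolev space $W^{1,A}(B_{2R})$, thanks to the $\Delta_2$-condition on the Young function $A$ from \eqref{cPhi}. I would then define $u_n$ as the unique minimiser of $J_n(\,\cdot\,,B_{2R})$ over $\tilde u_n+W^{1,2}_0(B_{2R})$: existence and uniqueness come from the strict convexity and quadratic coercivity of $F_n$, while $u_n\in C^\infty(B_{2R})$ follows from classical quasilinear Schauder theory applied to the smooth, uniformly elliptic Euler--Lagrange equation $\mathrm{div}(DF_n(Du_n))=0$.

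\textbf{Passing to the limit.} By minimality,
\[
\int_{B_{2R}} F_n(Du_n)\,dx\le \int_{B_{2R}} F_n(D\tilde u_n)\,dx.
\]
I split the right-hand side as $\int F(D\tilde u_n)\,dx + \int (F_n-F)(D\tilde u_n)\,dx$. The first piece converges to $\int F(Du)\,dx$ by Vitali's theorem, using continuity of $F$, the isotropic bound $F(z)\le C\,A(|z|)$ from \eqref{cPhi}, and strong $L^A$-convergence of $D\tilde u_n$. For the second piece, the Moreau--Yoshida and mollification parts vanish locally uniformly, while the quadratic contribution is controlled by $\mu_n\|D\tilde u_n\|_{L^2(B_{2R})}^2$; Young's convolution inequality bounds this norm by $C\epsilon_n^{-\alpha}$ for some $\alpha=\alpha(H,N)$, so choosing $\mu_n$ to vanish fast enough relative to $\epsilon_n$ kills this term, producing \eqref{prosobe1}. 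The isotropic lower bound \eqref{cc} applied to each $F_n$, together with the fact that $i_{F_n}$ stays bounded below along the $C^1_{\rm loc}$-convergence, yields a uniform coercivity $F_n(z)\ge c_0|z|^{1+1/H}-c_1$, so $(u_n)$ is bounded in $W^{1,1+1/H}(B_{2R})$. Any weak cluster point $\bar u$ satisfies $\bar u-u\in W^{1,1+1/H}_0(B_{2R})$ because $u_n-\tilde u_n\in W^{1,2}_0\subset W^{1,1+1/H}_0$ and $\tilde u_n\to u$; by weak lower semicontinuity of $J$ and the energy bound just established, $J(\bar u,B_{2R})\le J(u,B_{2R})$. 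The strict convexity of $F$ (Proposition \ref{proqu}-\textit{(1)}) and the minimality of $u$ then force $\bar u=u$, so the whole sequence converges and \eqref{prosobe2} holds.

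\textbf{Stress fields and main obstacle.} Apply Proposition \ref{prosob} to each $u_n$ with integrand $F_n$. Since $H$, $i_{F_n}$ and the energies $\int F_n(Du_n)$ are uniform in $n$, the stress fields $V_n:=DF_n(Du_n)$ are bounded in $W^{1,2}(B_R)$; up to a subsequence $V_n\rightharpoonup V$ in $W^{1,2}(B_R)$ and strongly in $L^2(B_R)$. To identify $V=DF(Du)$ I would use the equality of limit energies from Step~2 together with the standard Visintin-type strict-convexity argument (based on estimating $\tfrac12(F(Du_n)+F(Du))-F(\tfrac12(Du_n+Du))$), which upgrades $u_n\rightharpoonup u$ to $Du_n\to Du$ a.e.\ along a subsequence; composing with the $C^0_{\rm loc}$-convergence $DF_n\to DF$ gives $V_n\to DF(Du)$ pointwise a.e., matching the $L^2$-limit and hence establishing \eqref{prosobe3}. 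The \emph{main obstacle} is the delicate calibration of the three vanishing parameters $(\delta_n,\epsilon_n,\mu_n)$: $\mu_n$ must be sufficiently small relative to $\epsilon_n$ so that the quadratic perturbation is negligible against the competitor $\tilde u_n$ (whose $L^2$-norm blows up as $\epsilon_n\downarrow 0$), while $\delta_n$ and $\epsilon_n$ must be coordinated so that $F_n\to F$ strongly enough to propagate through both the energy inequality and Proposition \ref{prosob}'s Sobolev estimates with $n$-independent constants.
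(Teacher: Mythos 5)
Your construction and limit passage follow the paper's architecture (regularise $F$ via Proposition \ref{proqu}, mollify $u$ for boundary data, solve the regularised problems, identify the limit by minimality, lower semicontinuity and strict convexity, and get \eqref{prosobe3} from the uniform stress-field bounds of Proposition \ref{prosob}), but your insertion of the Moreau--Yoshida step breaks one link in the chain. The paper takes $F_n=\varphi_{\eps_n}*F+\tfrac{\mu_n}{2}|z|^2$, and the identification of the weak cluster point relies crucially on the pointwise inequality $F\le F_n$, which holds by Jensen's inequality for the convolution; it is this inequality that turns weak lower semicontinuity of $w\mapsto\int F(Dw)\,dx$ into
\[
J(\bar u)\le\liminf_n\int F(Du_n)\,dx\le\liminf_n\int F_n(Du_n)\,dx\le\limsup_n J_n(u_n)\le J(u),
\]
from which $\bar u=u$ and, by squeezing, \eqref{prosobe1} follow. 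With your $F_n=F_{\delta_n}*\varphi_{\eps_n}+\tfrac{\mu_n}{2}|z|^2$ the inequality $F\le F_n$ is false in general, because the Moreau--Yoshida regularisation satisfies $F_{\delta}\le F$: already for $F(z)=\tfrac12|z|^2$ one has $F_\delta(z)=\tfrac{|z|^2}{2(1+\delta)}$, so for $\mu_n$ small relative to $\delta_n$ one gets $F_n(z)<F(z)$ for all large $|z|$. Since $Du_n$ is not uniformly bounded, the locally uniform smallness of $F-F_n$ does not bridge $\int F(Du_n)\,dx$ and $\int F_n(Du_n)\,dx$, so your sentence ``by weak lower semicontinuity of $J$ and the energy bound just established, $J(\bar u)\le J(u)$'' has a genuine missing step; the same gap recurs in your Visintin argument, which needs $\int F(Du_n)\,dx\to\int F(Du)\,dx$ and not merely $\int F_n(Du_n)\,dx\to\int F(Du)\,dx$. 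Your closing remark locates the main obstacle in the calibration of $(\delta_n,\eps_n,\mu_n)$, but the real obstruction is this one-sided comparison, which no calibration of small parameters restores.

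The gap is repairable, and it is worth seeing what each construction buys. Either drop the Moreau--Yoshida step altogether, as the paper does; then $F\le F_n$ holds, but $\lambda_{\rm max}(D^2F_n)$ is no longer globally bounded, so the smoothness of $u_n$ cannot come from a global strong ellipticity argument --- the paper instead produces a Lipschitz minimiser via Stampacchia's theorem and the a priori bound ${\rm Lip}(u_n)\le A_n/\mu_n$ of \cite{BB}, which confines $Du_n$ to a ball on which $F_n$ is strongly elliptic (and this is also why $\eps_n$ must be chosen \emph{after} $\mu_n$). Or keep the Moreau--Yoshida step (which does make $F_n$ globally strongly elliptic and lets classical theory give $u_n\in C^\infty$ directly, a genuine simplification) and replace the lower semicontinuity step by a monotonicity argument: for decreasing $\delta_n$ one has $F_n\ge F_{\delta_n}*\varphi_{\eps_n}\ge F_{\delta_n}\ge F_{\delta_m}$ for all $n\ge m$, hence $\liminf_n\int F_n(Du_n)\,dx\ge\int F_{\delta_m}(D\bar u)\,dx$ by weak lower semicontinuity of the convex integrand $F_{\delta_m}$, and letting $m\to\infty$ with $F_{\delta_m}\nearrow F$ and monotone convergence recovers $\liminf_n J_n(u_n)\ge J(\bar u)$. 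Finally, for \eqref{prosobe3} the paper's identification is more economical than your Visintin route: it uses the strong $L^2$ and a.e.\ convergence of $V_n$ together with $Du_n=DF_n^{-1}(V_n)$ and the locally uniform convergence $DF_n^{-1}\to DF^{-1}$ from Proposition \ref{proqu}-(4), thereby avoiding any further appeal to the (gap-afflicted) convergence of $\int F(Du_n)\,dx$.
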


\begin{proof}
 
We set for brevity $B=B_{2R}$ and $p=1+1/H$. Fix an even $\varphi\in C^\infty_c(B_1; [0, +\infty[)$ such that $\|\varphi\|_1=1$ and use the notation of  Proposition \ref{proqu}-(4). Notice that for any such $\varphi$ it holds
\[
z=\int (z-y)\, \varphi(y)\, dy
\]
so that Jensen inequality gives 
\beq
\label{J0}
F(z)\le (\varphi \ast F)(z).
\eeq 
Set 
\[
F_n(z)=(\varphi_{\eps_n}*F) (z)+\frac{\mu_n}{2}\, |z|^2,
\]
with $\eps_n\downarrow 0$ and $\mu_n\downarrow 0$ to be chosen, so that $F_n$ fulfils for each $n$ all the conditions stated in the lemma.
 Notice that thanks to \eqref{cc} and the convergence $F_n \to F$ in $C^1_{\text{loc}}(\RN)$ we have
\begin{equation}
\label{equicc}
F_n(w)\ge \frac{|w|^{p}}{C} \quad \text{as well as} \quad \frac{1}{C} \le \inf_{|z|=1} |DF_n(z)|\le C,
\end{equation}
respectively, for constants $C$ independent of $n$.

Let, for sufficiently large $n$,
\[
\psi_n=u*\varphi_{1/n}\in C^\infty(B)
\]  
and consider the minimisation problem
\begin{equation}
\label{probn}
\inf\{J_{n}(w):w\in {\rm Lip}(\overline{B}), w=\psi_n \ \text{on $\partial B$}\}.
\end{equation}
 According to \cite[Theorem 9.2]{S}, there is a solution $u_{n} $ of \eqref{probn}, which (see \cite[p. 5923]{BMT} and \eqref{cc}) also solves 
\[
J_{n}(u_{n})=\inf\{J_{n}(w):w\in \psi_n+W^{1,p}_{0}(B)\}.
\]
By  \cite[Theorem 4.1]{BB} there are constants $A_{n}\ge 1$ (depending only on $B$ and on the regularity of $\psi_n$, but not on $\eps_{n}$, $\mu_{n}$) such that 
\[
{\rm Lip}(u_{n})\le \frac{A_{n}}{\mu_{n}}.
\]
The integrand $F$ is therefore strongly elliptic on the range of $Du_n$ since $F_n\in C^\infty(\R^N)$ and 
\[
\lambda_{\rm min}(D^2F_n(Du_n))\ge \mu_n, \qquad \lambda_{\rm max}(D^2F_n(Du_n))\le C<\infty
\]
on $\overline{B}$, so that  standard regularity theory gives $u_n\in C^\infty(\overline{B})$.
 We first choose $\mu_{n}\downarrow 0$ so that 
\beq
\label{condmun}
\lim_{n}\mu_{n}\int_{B}|D\psi_n|^{2}\, dx=0.
\eeq
Set
\[
M_{n}=1+\sup_{B} |D\psi_n| + \frac{A_n}{\mu_n}
\]
and observe that, since $\varphi_{\eps_n} * F\to F$ in $C^1_{\rm loc}(\R^N)$ as $\eps_n\downarrow 0$ 
and $M_{n}$ is independent of $\eps_{n}$, we can pick $(\eps_{n})\subseteq (0, 1)$, $\eps_{n}\downarrow 0$, so that 
\beq
\label{epsn}
\|\varphi_{\eps_n}*F-F\|_{C^1(B_{M_n})}\le \frac{1}{n}.
\eeq
Clearly, it still holds $F_{n}\to F$ in $C^{1}_{{\rm loc}}(\R^{N})$.
 We claim that \eqref{prosobe} holds true with such choices.

By the minimality of $u_{n}$, it holds $J_n(u_n)\le J_n(\psi_n)$, while from   \eqref{condmun} we get
\beq
\label{varlimsup}
\varlimsup_{n} J_{n}(u_n)\le \varlimsup_{n} J_{n}(\psi_n)= \varlimsup_{n} \int_{B} (\varphi_{\eps_{n}}*F)(D\psi_n)\, dx.
\eeq
In order to estimate the rightmost term of \eqref{varlimsup} we use  \eqref{epsn}  to have
\beq
\label{varph}
 \int_{B} (\varphi_{\eps_{n}}*F) (D\psi_n)\, dx\le \frac{|B|}{n}+\int_{B}F(D\psi_n)\, dx,
 \eeq
while the vector-valued Jensen inequality  implies
\[
\begin{split}
\int_{B}F(D\psi_n)\, dx&=\int_{B}F\left(\int \varphi_{1/n}(x-y)\, Du(y)\, dy\right)\, dx\\
&\le \int_{B}\left(\int \varphi_{1/n}(x-y) F(Du(y))\, dy\right) dx\\
&\le \int_{B}\varphi_{1/n}*F(Du)\, dx.
\end{split}
\]
Therefore from \eqref{varph} we have
\[
 \varlimsup_{n} \int_{B} (\varphi_{\eps_{n}}*F)(D\psi_n)\, dx\le \varlimsup_{n}\int_{B}\varphi_{1/n}*F(Du)\, dx=\int_B F(Du)\, dx
 \]
which, inserted into  \eqref{varlimsup}, gives
 \beq
 \label{limsup}
 \varlimsup_{n} J_{n}(u_{n})\le J(u).
 \eeq
In particular, $J_n(u_n)$ is bounded and \eqref{equicc} then implies a uniform bound on $Du_n$ in $L^{p}(B)$. 
   Moreover, since $u_n-\psi_n\in W^{1, p}_0(B)$, then  Poincar\'e's inequality gives
 \[
 \begin{split}
 \|u_{n}\|_{L^{p}(B)}&\le \|u_{n}-\psi_n\|_{L^{p}(B)}+\|\psi_n\|_{L^{p}(B)}\\
 &\le C\, \big(\|D(u_{n}-\psi_n)\|_{L^{p}(B)}+\|\psi_n\|_{L^{p}(B)}\big)\\
 &\le C\, \big(\|Du_{n}\|_{L^{p}(B)}+\|\psi_n\|_{W^{1,p}(B)}\big),
 \end{split}
 \]
 so that $(u_{n})$ is bounded in $L^{p}(B)$ as well. Therefore  $(u_{n})$   possesses a (not relabeled) subsequence weakly converging in $W^{1,p}(B)$ to some $v$  and  it is readily checked that $v\in u+W^{1,p}_{0}(B)$.  This in turn implies that \eqref{prosobe2} is satisfied.

Now, since  the map
\[
w\mapsto \int_{B}F(Dw)\, dx
\]
is weakly lower semicontinuous in $W^{1,p}(B)$ and $F_n\ge F$ thanks to \eqref{J0}, we get
\beq
\label{qpl}
J(v)\le \varliminf_{n} \int_{B} F(Du_{n})\, dx\le\varliminf_{n} \int_{B} F_n(Du_{n})\, dx = \varliminf_{n} J_{n}(u_{n}).
\eeq
 Coupling the latter with \eqref{limsup} gives  $ J(v)\le J(u)$, implying $v=u$ by the strict convexity of $F$. 
 In particular,
 up to subsequences,
 \beq
 \label{wvn}
 Du_{n}\rightharpoonup Du \qquad \text{in $L^{p}(B)$}.
 \eeq
Therefore, from \eqref{qpl} and  \eqref{limsup} we infer 
   \beq
 \label{fn3}
  \int_B F(Du_n)\, dx\to \int_B F(Du)\, dx,
  \eeq
that is \eqref{prosobe1}. We now  apply Proposition \ref{prosob} to $ DF_n(Du_n)$, thus obtaining 
\[
\|D F_{n}(Du_{n})\|_{W^{1, 2}(B_R)}\le C,
\]
where the uniform bound holds thanks to equations \eqref{equicc} and \eqref{fn3}, being $B=B_{2R}$.
Setting
 \beq
 \label{Vn}
V_{n}=DF_{n}(Du_{n})
\eeq
  we can pick a subsequence such that $V_{n}\to V$ weakly in $W^{1,2}(B_R)$, strongly in $L^{2}(B_R)$, and pointwise a.\,e.\,in $B_R$,
for a suitable $V\in W^{1,2}(B_R)$. Since   $DF_n^{-1}\to DF^{-1}$ locally uniformly thanks to Proposition \ref{proqu}-(4), we infer that $DF_n^{-1}(V_n) \to DF^{-1}(V)$ a.\,e.. Then \eqref{Vn} implies that $Du_n \to DF^{-1}(V)$  a.\,e.,  and in turn \eqref{wvn} allows the identification $V=DF(Du)$. This shows \eqref{prosobe3} and the proof is thus complete.
   \end{proof}

 \section{Caccioppoli inequality}
\label{5}
In this section we prove a Caccioppoli-type inequality. To this aim, let us first show the following result, which is a simplification of \cite[Lemma 3.1]{GM}.

\begin{lemma}\label{lemma51}
Let $P$ and $S$ be $N\times N$ symmetric matrices, with  $P$ positive definite. Then,
\beq
\label{matr}
\left(P\, S, S\, P\right)_2\ge \frac{\lambda_{\rm min}(P)}{\lambda_{\rm max}(P)} |P\, S|_2^2.
\eeq
\end{lemma}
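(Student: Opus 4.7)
My plan is to diagonalize $P$ and reduce the matrix inequality to an elementary scalar one, pair by pair, on the eigenvalues.

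First I would exploit the orthogonal invariance of the Frobenius inner product. Writing $P=O D O^t$ with $D=\mathrm{diag}(d_1,\ldots,d_N)$, $d_i>0$, and setting $\tilde S=O^tSO$, which is still symmetric, both $(PS,SP)_2=\mathrm{Tr}((PS)^2)$ and $|PS|_2^2=\mathrm{Tr}(PS\cdot SP)$ are preserved, so it is enough to prove the inequality when $P=D$ is diagonal. At that point a direct entrywise computation using the symmetry $\tilde s_{ij}=\tilde s_{ji}$ gives
\[
\mathrm{Tr}((DS)^2)=\sum_{i,j}d_id_j\, s_{ij}^2,\qquad |DS|_2^2=\sum_{i,j}d_i^2\, s_{ij}^2=\tfrac12\sum_{i,j}(d_i^2+d_j^2)\, s_{ij}^2,
\]
the last equality again coming from $s_{ij}^2=s_{ji}^2$.

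The proof then reduces to showing the pointwise bound
\[
d_id_j\ \ge\ \frac{\lambda_{\min}(P)}{\lambda_{\max}(P)}\cdot\frac{d_i^2+d_j^2}{2}\qquad \text{for every } i,j,
\]
and summing against $s_{ij}^2\ge 0$. Equivalently, one needs $\frac{2d_id_j}{d_i^2+d_j^2}\ge \frac{d_{\min}}{d_{\max}}$, which I would establish via the elementary fact that $\frac{2ab}{a^2+b^2}\ge \min(a,b)/\max(a,b)$ for $a,b>0$ (clearing denominators, this boils down to $\max(a,b)^2\ge\min(a,b)^2$). Applying this to $a=d_i$, $b=d_j$ and using $\min(d_i,d_j)\ge d_{\min}$, $\max(d_i,d_j)\le d_{\max}$ yields the required pointwise bound.

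There is essentially no obstacle: once one notices that the Frobenius norm is orthogonally invariant and that $P$ acts diagonally in its own eigenbasis, the whole inequality becomes a convex combination of the scalar inequalities above, so the argument is short and quantitatively sharp (equality is attained when $S$ has nonzero entries only between the eigenspaces of $\lambda_{\min}(P)$ and $\lambda_{\max}(P)$).
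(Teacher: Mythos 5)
Your proof is correct and is essentially the paper's argument: diagonalise $P$ orthogonally and compare the two quadratic forms in the entries of $S$ eigenvalue pair by eigenvalue pair. The paper uses the unsymmetrised termwise comparison $\lambda_i\lambda_j\ge \beta^{-1}\lambda_i^2$ (with $\beta=\lambda_{\rm max}(P)/\lambda_{\rm min}(P)$) directly, which avoids your auxiliary inequality $2ab/(a^2+b^2)\ge \min(a,b)/\max(a,b)$, but the two reductions are interchangeable. One caveat: your closing parenthetical about sharpness is false. For $S$ supported on the pair of extreme eigenspaces the ratio of the two sides of \eqref{matr} equals $2\lambda_{\rm min}(P)\lambda_{\rm max}(P)/(\lambda_{\rm min}(P)^2+\lambda_{\rm max}(P)^2)$, which is strictly larger than $\lambda_{\rm min}(P)/\lambda_{\rm max}(P)$ unless $P$ is a scalar matrix; so equality in \eqref{matr} is never attained for nontrivial $S$ and non-scalar $P$, and the optimal constant is in fact $2\lambda_{\rm min}\lambda_{\rm max}/(\lambda_{\rm min}^2+\lambda_{\rm max}^2)$. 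This does not affect the lemma, which only claims the stated one-sided bound.
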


\begin{proof}
Let us observe that both sides of \eqref{matr} are invariant by orthogonal change of basis, so we can assume that $P$ is diagonal, with positive  eigenvalues $\lambda_1, \dots, \lambda_N$ obeying
\[
\lambda_j\ge \frac{1}{\beta} \, \lambda_i, \quad \text{where }  \beta=\frac{\lambda_{\rm max}(P)}{\lambda_{\rm min}(P)},
\]
for any $i, j=1, \dots, N$.
Then, if $S=(a_{ij})$, we have
\[
\left(P\, S, S\, P\right)_2=\sum_{i, j=1}^N\lambda_i\,  a_{ij} \, \lambda_j\, a_{ji}=\sum_{i, j=1}^N\lambda_i\, \lambda_j\, a_{ij}^2\ge \frac{1}{\beta}\sum_{i, j=1}^N\lambda_i^2\, a_{ij}^2=\frac{1}{\beta}\, |P\, S|_2^2,
\]
as claimed.
\end{proof}

\begin{lemma}[Caccioppoli Inequality]\label{caclemma}
Let $F$ be a smooth $H$-q.\,u.\,c.\,function and let $u\in C^\infty(B_R)$.
 For $V$ given by \eqref{def-V}, assume that 
\beq
\label{div}
{\rm div}\, V=0\qquad \text{in $B_R$}.
\eeq
Moreover, for  a given affine function  $\ell:\R^N\to \R$,   and all $\rho<R$, set 
\beq
\label{A-kr}
A(\ell, \rho)=\left\{x\in B_\rho: F(Du(x))\ge  \ell(Du(x))\right\}.
\eeq
Then, for   all $\rho<R$, it holds
	\beq
	\label{cac}
	\int_{A( \ell, \rho)}|DV|_2^2\, dx\le \left(\frac{\pi\,  H }{R-\rho}\right)^2\,  \int_{A( \ell, R)} |V-D\ell|^2\, dx.
	\eeq
	\end{lemma}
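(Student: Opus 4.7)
The plan is to turn the scalar identity $\divergenz V = 0$ into a row-divergence identity for the Jacobian $DV$, then test it against a carefully localised vector field. Differentiating $\partial_i V^i = 0$ with respect to $x_j$ yields $\partial_i(\partial_j V^i)=0$ for each $j$, i.\,e.\,$\mathrm{Div}(DV^t)=0$, which in weak form reads
\[
\int_{B_R}\partial_i V^j\,\partial_j\Phi^i\,dx = 0\qquad \forall\, \Phi\in C_c^\infty(B_R;\R^N).
\]
Setting $w:=F(Du)-\ell(Du)$, $\xi:=V-D\ell$, $P:=D^2F(Du)$ and $S:=D^2u$, one checks $DV=PS$ (in the convention $(DV)_{ji}=\partial_i V^j$) and $A(\ell,\rho)=\{x\in B_\rho:w(x)\ge 0\}$. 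I would choose a Lipschitz cutoff $\eta$ with $\eta\equiv 1$ on $B_\rho$, $\supp\eta\subset B_R$, and $|D\eta|\le(R-\rho)^{-1}$, together with a smooth non-decreasing approximation $h_\varepsilon$ of $\chi_{[0,\infty)}$, and test the above identity with $\Phi^i:=\eta^2\,\xi^i\,h_\varepsilon(w)$.

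\textbf{The three resulting terms.} Expanding $\partial_j\Phi^i$ by the Leibniz rule gives $(A)+(B)+(C)=0$, where
\begin{align*}
(A) &= 2\int \eta\,h_\varepsilon(w)\,(D\eta,\,DV\,\xi)\,dx, \\
(B) &= \int \eta^2\,h_\varepsilon(w)\,(DV,DV^t)_2\,dx, \\
(C) &= \int \eta^2\,h_\varepsilon'(w)\,(Dw,\,DV\,\xi)\,dx.
\end{align*}
Lemma \ref{lemma51} applied to $P,S$ gives $(DV,DV^t)_2=(PS,SP)_2\ge H^{-1}|PS|_2^2=H^{-1}|DV|_2^2$, so $(B)\ge H^{-1}\int \eta^2 h_\varepsilon |DV|_2^2$. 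The chain rule gives $Dw=S\xi$ and $DV\,\xi=PS\,\xi$, whence the integrand of $(C)$ equals $(S\xi,\,P(S\xi))\ge 0$ by positive definiteness of $P$, and since $h_\varepsilon'\ge 0$ one has $(C)\ge 0$. Cauchy-Schwarz bounds $(A)$ above by $2\bigl(\int \eta^2 h_\varepsilon |DV|_2^2\bigr)^{1/2}\bigl(\int h_\varepsilon |D\eta|^2|\xi|^2\bigr)^{1/2}$. Combining through $(B)=-(A)-(C)\le -(A)$ and squaring yields
\[
\int \eta^2 h_\varepsilon |DV|_2^2 \le 4H^2 \int h_\varepsilon |D\eta|^2 |\xi|^2.
\]
Sending $h_\varepsilon\nearrow\chi_{\{w\ge 0\}}$ restricts both sides to the superlevel sets and produces the claim with the sharper constant $(2H/(R-\rho))^2\le (\pi H/(R-\rho))^2$.

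\textbf{Main obstacle.} The delicate point is the favourable sign of $(C)$: in the limit $h_\varepsilon\to\chi_{[0,\infty)}$, the factor $h_\varepsilon'(w)$ formally concentrates on the free boundary $\{w=0\}$, and without a definite sign this singular contribution would ruin the estimate. Its non-negativity is a direct manifestation of the convexity of $F$ (positive definiteness of $P=D^2F$), and is exactly the structural input that allows the Caccioppoli inequality to be truncated at the non-trivial level $\ell$ in this vectorial setting. A secondary bookkeeping issue is the passage $h_\varepsilon\to\chi_{\{w\ge 0\}}$, which is justified in the smooth regularised framework of Lemma \ref{regularized} by dominated convergence, since $DV$ and $\xi$ are already in $L^2_{\mathrm{loc}}$ thanks to Proposition \ref{prosob}.
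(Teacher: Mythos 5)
Your argument is correct, and it rests on the same two structural pillars as the paper's proof: Lemma \ref{lemma51}, giving $(DV,DV^t)_2\ge H^{-1}|DV|_2^2$ pointwise, and the observation that the contribution coming from the level set $\{F(Du)=\ell(Du)\}$ has a favourable sign because $D^2F$ is positive semi-definite and $D\big((F-\ell)(Du)\big)=D^2u\,(V-D\ell)$. Where you genuinely diverge is in the localisation. The paper integrates the pointwise inequality over $A(\ell,r)$, converts everything into boundary integrals via the divergence theorem (which forces an appeal to Sard's theorem to ensure that a.e.\ level set is a $C^1$ hypersurface, plus a limiting argument in the level), and then optimises a one-dimensional Rayleigh quotient in the radial variable, the cosine minimiser producing the constant $\pi^2H^2/(R-\rho)^2$. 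You instead test the identity $\int \partial_iV^j\,\partial_j\Phi^i\,dx=0$ with $\Phi=\eta^2\,\xi\,h_\eps(w)$, so the boundary term reappears as the volume term $(C)$ weighted by $h_\eps'(w)\ge 0$; this bypasses any discussion of the regularity of the level sets and yields the better constant $4H^2/(R-\rho)^2\le (\pi H/(R-\rho))^2$, so the stated inequality follows a fortiori. Two small points to tighten. First, the limit $h_\eps\to\chi_{[0,\infty)}$ should be taken \emph{decreasing from above} (e.g.\ $h_\eps=1$ on $[0,\infty)$, $h_\eps=0$ on $(-\infty,-\eps]$, interpolated in between): an increasing continuous approximation necessarily converges to $\chi_{(0,\infty)}$ and would leave the set $\{F(Du)=\ell(Du)\}$ out of the left-hand side; the decreasing choice keeps $h_\eps'\ge 0$ and recovers $A(\ell,\cdot)$ exactly on both sides by dominated convergence. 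This is precisely the issue the paper handles by shifting the level. Second, Lemma \ref{lemma51} is stated for positive \emph{definite} $P$, while a smooth q.\,u.\,c.\ integrand may have a degenerate Hessian at some points; either note that \eqref{quc} extends to every point by continuity of $D^2F$, so that $\lambda_{\min}=0$ forces $D^2F=0$ and both inequalities become trivial there, or perturb to $V+\eps\,Du$ as the paper does before letting $\eps\downarrow 0$.
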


	\begin{proof}	
Let $\eps>0$ and set $V_\eps=V+\eps\, Du=DF(Du)+\eps\, Du$. Differentiating the latter gives
 \[
 DV_\eps=\big(D^2F(Du)+\eps\, {\rm Id}\big)\, D^2 u,
 \]
where the matrices $D^2F(Du)+\eps\, {\rm Id}$ and $D^2u$ are  symmetric  positive definite and symmetric, respectively. By \eqref{quc}, we have
\[
\begin{split}
\lambda_{\rm max}(D^2F(Du)+\eps\, {\rm Id})& \le \lambda_{\rm max}(D^2F(Du))+\eps \\
&\le H\, \lambda_{\rm min}(D^2F(Du))+\eps\\
&\le H\, \lambda_{\rm min}(D^2F(Du)+\eps\, {\rm Id})
\end{split}
\]
hence  \eqref{matr}  gives $|DV_\eps|_2^2\le H \, (DV_\eps,  DV_\eps^t)_2$.
Letting $\eps\downarrow 0$   provides
\[
	|DV|_2^2\le H \, (DV,  DV^t)_2.
\]
We want to integrate the above inequality  over $A( \ell, r)$,  for $r< R$, assuming for the moment that $\{F(Du)=\ell(Du)\}$ is a $C^1$ hypersurface in $B_R$. 
Using the divergence theorem gives
	\beq
	\label{eq2}
	\begin{split}
	\int_{A( \ell, r)}|DV|_2^2\, dx&\le H\, \int_{A( \ell, r)} \left(DV, DV^t\right)_2\, dx\\
	&= H\, \int_{A( \ell, r)}\sum_{i, j=1}^N V^i_j\, (V^j-\ell_j)_i\, dx\\
	&= H\, \int_{A( \ell, r)}\sum_{i, j=1}^N \Big(V^i_j\, (V^j-\ell_j)\Big)_i-\sum_{j=1}^N (V^j-\ell_j)\, \Div V_j\, dx\\
	&= H\,  \int_{\partial A(\ell, r)} \sum_{i, j=1}^NV^i_j \, (V^j-\ell_j)\, n^i\, d{\mathcal H}^{N-1},
	\end{split}
	\eeq
where $n$ denotes the exterior normal to $\partial A( \ell, r)$ and where we used the fact that \eqref{div} implies $\Div V_j=0$ for all $j= 1, \dots, N$. 
To compute the last integral in \eqref{eq2}, notice that 
	\[
	 \partial A( \ell, r)= \bigl(\partial B_r\cap \{F(Du)\ge  \ell(Du)\}\bigr) \bigcup \,  \bigl(B_r \cap \{F(Du)=\ell(Du)\}\bigr),
	\]
so that 
	\[
	n(x)=
	\begin{cases} 
	\dfrac{x}{|x|}&\text{if $x\in  \partial B_r\cap \{F(Du)\ge  \ell(Du)\}$}\\[15pt]
	-\dfrac{D\big((F-\ell)(Du(x))\big)}{|D\big((F-\ell)(Du(x))\big)| }&\text{if $x\in B_r \cap \{F(Du)=\ell(Du)\}$}.
	\end{cases}
	\]
Let $x \in B_r \cap \{F(Du)=\ell(Du)\}$ and compute
\[
U^i(x):=\big((F-\ell)(Du(x)) \big)_i =\sum_{k=1}^N(V^k-\ell_k)\, u_{ki} \quad \text{as well as} \quad  V^i_j=\sum_{h=1}^N F_{i h}(Du) \, u_{h j},
\]
 so that
	\[
	\begin{split}
	\sum_{i, j=1}^NV^i_j \, (V^j-\ell_j)\, n^i&=-\frac{1}{|U|} \sum_{i, j, k=1}^NV^i_j\, (V^j-\ell_j)\, (V^k-\ell_k)\, u_{k i}\\
	&=- \frac{1}{|U|}\,\sum_{i,j,k,h=1}^N F_{i h}(Du)\,  u_{h j}\, (V^j-\ell_j)\, u_{i k}\, (V^k-\ell_k)\\
	&= -\frac{1}{|U|}\, \big(D^2F(Du)\, U, U\big)< 0,
	\end{split}
	\]
	being $D^2F$non-negative definite.
Then the previous inequality implies that the rightmost integral in \eqref{eq2} evaluated on $B_r \cap \{F(Du)=\ell(Du)\}$ has a negative sign and hence \eqref{eq2} reduces to
 	\beq
	\label{eq3}
	\begin{split}
	\int_{A( \ell, r)}|DV|_2^2\, dx&\le H\,  \int_{ \partial B_r\cap \{F(Du)\ge  \ell(Du)\}}\sum_{i, j=1}^N V^i_j\, (V^j-\ell_j)\, n^i\, d{\mathcal H}^{N-1}\\
	&\le H\, \int_{ \partial B_r\cap \{F(Du)\ge  \ell(Du)\}} |DV|_2\, |V-D\ell|\, d{\mathcal H}^{N-1},
	\end{split}
	\eeq
where in the last step we used the  Schwarz inequality. 

 Let now $\zeta$ be the function given by
\[
	\zeta(r)=\int_{ A( \ell, r)} |DV|_2\, |V-D\ell|\, dx \quad \text{for all } 0<r<R,
	\]
with first derivative 
	\beq
	\label{h-primo}
	 \zeta'(r)= \int_{  \partial B_r\cap \{F(Du)\ge  \ell(Du)\}}|DV|_2\,  |V-D\ell|\, d{\mathcal H}^{N-1}.
	\eeq
Let  $\varphi\in C^\infty([0, R])$ with $\varphi(R)=0$.
We  multiply \eqref{eq3} by $\varphi^2$ and integrate in $[0, R]$. Taking  \eqref{h-primo} into account, integrating by parts and using the fact that $\varphi(R)= \zeta(0)=0$, we get
\[
	\begin{split}
	 \int_0^R\hspace{-3pt}\int_{A( \ell, r)}\varphi^2\, |DV|_2^2\, dx\, dr &\le H\, \int_0^R\varphi^2\, \zeta'\, dr\\
	 & =H\left(\varphi^2(R)\,\zeta(R)-\varphi^2(0)\, \zeta(0)\right)  - 2\, H\, \int_0^R\varphi\, \varphi'\, \zeta \, dr \\
	&\le 2\, H\,  \int_0^R\hspace{-3pt} \int_{A( \ell, r)} |\varphi| |\varphi'| |DV|_2 |V-D\ell| \; dx\,  dr \\
	& \le 2\, H \l({\int_0^R\hspace{-3pt}\int_{A( \ell, r)}\varphi^2\, |DV|_2^2\, dx dr}\r)^{\frac 1 2}\hspace{-3pt} \l({\int_0^R\hspace{-3pt}\int_{A( \ell, r)}|\varphi'|^2\, |V-D\ell|^2\, dx dr}\r)^{\frac 1 2}
	\end{split}
	\]
which simplifies to
	\[
	\int_0^R\hspace{-3pt}\int_{A( \ell, r)}\varphi^2\, |DV|_2^2\, dx\, dr\le 4\, H^2\, \int_0^R\hspace{-3pt}\int_{A( \ell, r)}|\varphi'|^2\, |V-D\ell|^2\, dx\, dr.
	\]
Fix $0< \rho< R$. Taking advantage of the monotonicity of the functions
\[
r\mapsto  \int_{A( \ell, r)}|DV|_2^2\, dx, \qquad r\mapsto  \int_{A( \ell, r)}|V-D\ell |_2^2\, dx
\]
 we then have
	\[
	\begin{split}
	  \int_{A( \ell, \rho)}|DV|_2^2\, dx\, \int_\rho^R\varphi^2\, dr&\le \int_\rho^{R}\hspace{-3pt}\int_{A( \ell, r)} \varphi^2\, |DV|_2^2\, dx\, dr\\
	  &\le \int_0^{R}\hspace{-3pt}\int_{A( \ell, r)} \varphi^2\, |DV|_2^2\, dx\, dr\\
	&\le 4\, H^2\, \int_0^R\hspace{-3pt}\int_{A( \ell, r)}|\varphi'|^2\, |V-D\ell|^2\, dx\, dr\\
	 	&\le   4\, H^2  \int_{A( \ell, R)} |V-D\ell|^2\, dx\, \int_0^R|\varphi'|^2\, dr,
	\end{split}
	\]
	which implies
		\[
	\int_{A( \ell, \rho)}|DV|_2^2\, dx\le 4\, H^2\, I(R, \rho)\,  \int_{A( \ell, R)} |V-D\ell|^2\, dx 
	\]
	for
	\[
	I(R, \rho):=\inf \left\{\dfrac{\displaystyle{\int_0^R|\varphi'|^2\, dr}}{\displaystyle{\int_\rho^R\varphi^2\, dr}}: \varphi\in C^\infty([0, R])\setminus \{0\}, \varphi(R)=0\right\}.
	\]
	A minimiser for this problem is found through elementary considerations as
	\[
	\varphi(r)=
	\begin{cases}
	1&\text{if $r\le \rho$}\\[5pt]
	\cos\left(\dfrac{\pi}{2}\, \dfrac{r-\rho}{R-\rho}\right) &\text{if $r\in [\rho, R]$},
	\end{cases}
	\]
	so that an explicit computation gives
	\[
	I(R, \rho)=\frac{\pi^2}{4\, (R-\rho)^2},
	\]
	and \eqref{cac} follows.
	
 To remove the assumption that the set $\{F(Du)=\ell(Du)\}$ is $C^1$ in $B_R$, {we invoke Sard's theorem}, 
 which ensures that this property is satisfied by 
$\{F(Du)=\ell(Du)+k\}$, for a.\,e.\,$k\in \R$. Given $k_n\downarrow 0$, we can repeat the same argument as before, starting from \eqref{eq2}, thus arriving to
	\begin{equation}
	\label{cac-1}
	\int_{A(\ell+ k_n, r)} |DV|_2^2 \; dx \le \l(\frac{\pi \, H}{R- \rho}\r)^2 \int_{A(\ell+ k_n, R)} |V- D\ell|^2 \; dx.
	\end{equation}
Notice that $A(\ell+k_n, \cdot)\searrow A(\ell, \cdot)$,  so we can pass to the limit in \eqref{cac-1} by monotone (or dominated) convergence, and in turn  \eqref{cac} holds true also in this case.
\end{proof}

We report the following consequence, already present in \cite[proof of Theorem 3.2]{GM}, for the sake of completeness.

\begin{corollary}
\label{dato}
Let $F$ be a $H$-q.\,u.\,c. function, let $u$ be a local minimiser for $J$ in $\Omega$ in the sense of Definition \ref{def:local-min} and let $V$ be given by \eqref{def-V}. Then,
\beq
\label{cac1}
\int_{B_R} |DV|_2^2\, dx\le \frac{C}{R^{N+2}}\left(\int_{B_{2R}} |V|\, dx\right)^2
\eeq
for $C=C(H, N)$ and all balls $B_R$ such that $B_{4R}\subseteq\Omega$.
\end{corollary}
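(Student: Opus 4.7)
The approach is to extract the corollary from the Caccioppoli inequality of Lemma \ref{caclemma} through a reduction to the smooth setting, a Sobolev-Poincar\'e inequality, interpolation, and a standard iteration. First I apply Lemma \ref{regularized} on $B_{4R}$ to produce smooth strongly elliptic $H$-q.\,u.\,c.\,integrands $F_n$ and smooth minimisers $u_n$ such that $V_n := DF_n(Du_n)$ is divergence-free on $B_{2R}$ and $V_n \rightharpoonup V$ in $W^{1,2}(B_{2R})$, which in particular yields $V_n \to V$ in $L^1(B_{2R})$ by Rellich-Kondrachov. Proving the estimate for each $V_n$ with a constant independent of $n$ and passing to the limit via weak lower semicontinuity of the $L^2$-norm will then conclude.

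For the smooth objects I apply Lemma \ref{caclemma} with the affine function $\ell_\xi(z) := (\xi, z) - F_n^*(\xi)$, where $F_n^*$ is the Legendre transform (finite since $F_n$ is coercive) and $\xi \in \R^N$ is a free parameter. Since $F_n \ge \ell_\xi$ pointwise by definition of the Legendre transform, the superlevel set in \eqref{A-kr} satisfies $A(\ell_\xi, \rho) = B_\rho$, and Lemma \ref{caclemma} yields
\[
\int_{B_\rho} |DV_n|_2^2\, dx \le \frac{(\pi H)^2}{(r-\rho)^2} \int_{B_r} |V_n - \xi|^2\, dx, \qquad R \le \rho < r \le 2R.
\]
Choosing $\xi = (V_n)_{B_r}$, Sobolev-Poincar\'e gives $\|V_n-\xi\|_{L^{2^*}(B_r)} \le C \|DV_n\|_{L^2(B_r)}$ for $N \ge 3$ (with an analogous embedding into some $L^p$ with $p < \infty$ when $N = 2$), and interpolating $L^2$ between $L^1$ and $L^{2^*}$ produces
\[
\|V_n-\xi\|_{L^2(B_r)}^2 \le C\, \|V_n-\xi\|_{L^1(B_r)}^{4/(N+2)}\, \|DV_n\|_{L^2(B_r)}^{2N/(N+2)}.
\]

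Substituting this into the Caccioppoli estimate and applying Young's inequality with conjugate exponents $(N+2)/N$ and $(N+2)/2$ absorbs any prescribed small multiple of $\|DV_n\|_{L^2(B_r)}^2$; using in addition $\|V_n - (V_n)_{B_r}\|_{L^1(B_r)} \le 2\|V_n\|_{L^1(B_{2R})}$ gives
\[
\int_{B_\rho} |DV_n|_2^2\, dx \le \tau \int_{B_r} |DV_n|_2^2\, dx + \frac{C}{(r-\rho)^{N+2}} \left(\int_{B_{2R}} |V_n|\, dx\right)^2
\]
for all $R \le \rho < r \le 2R$, with a constant $\tau \in (0, 1)$ independent of $\rho$ and $r$. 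The standard Giaquinta-Giusti iteration lemma on $[R, 2R]$ then delivers $\int_{B_R} |DV_n|_2^2 \le (C/R^{N+2}) (\int_{B_{2R}} |V_n|)^2$; the two-dimensional case is handled analogously, with a slightly larger exponent appearing in the iteration step that still telescopes to the same final estimate. Passing to the limit concludes. The main technical pivot is arranging the Young step so that the coefficient of $\|DV_n\|_{L^2(B_r)}^2$ is strictly below $1$ uniformly in $\rho$ and $r$, after which the iteration is routine.
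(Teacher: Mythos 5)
Your proposal is correct and follows essentially the same route as the paper: regularise via Lemma \ref{regularized}, apply the Caccioppoli inequality with an affine minorant of $F$ so that $A(\ell,\rho)=B_\rho$, interpolate the $L^2$ norm between $L^1$ and the gradient, and close with Young's inequality and the standard absorption lemma. The only variation is your choice of $\ell$: the paper simply takes $\ell\equiv\min F$ (so $D\ell=0$) and then invokes, through an extension operator, the Gagliardo--Nirenberg inequality $\|V\|_{L^2}\lesssim\|V\|_{L^1}^{2/(N+2)}\|V\|_{W^{1,2}}^{N/(N+2)}$ applied to $V$ itself, whereas you use the supporting affine function $\ell_\xi(z)=(\xi,z)-F_n^*(\xi)$ with $\xi=(V_n)_{B_r}$, which lets you work with the mean-subtracted Sobolev--Poincar\'e inequality directly; both devices yield the same estimate, and your Legendre-transform observation is a clean way to exploit the generality of Lemma \ref{caclemma}. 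One small correction: your hedge for $N=2$ (``a slightly larger exponent \dots that still telescopes to the same final estimate'') cannot be right as stated, because the exponent $N+2$ of $(r-\rho)$ is forced by scaling invariance of \eqref{cac1}; the proper fix is that the interpolation inequality $\|f\|_{L^2}^2\le C\,\|f\|_{L^1}\,\|Df\|_{L^2}$ (Nash/Gagliardo--Nirenberg) holds in dimension $2$ without passing through $L^{2^*}$, which is exactly what the paper's appeal to \cite[Theorem 12.83]{Leoni} provides uniformly for all $N\ge 2$.
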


\begin{proof}
By using Lemma \ref{regularized} on $B_{2R}$ we can suppose that both $F$ and $u$ are smooth, since for the regularised problems the left-hand side of \eqref{cac1} is lower semicontinuous with respect to the weak convergence in $W^{1,2}(B_R)$, while the right-hand side is (up to subsequences) convergent. We thus apply  Lemma \ref{caclemma} for $\ell(z)\equiv \min F$ to get
\beq
\label{ja}
\int_{B_\rho} |DV|_2^2\, dx\le \frac{\widetilde C}{(r-\rho)^2}\, \int_{B_{r}} |V|^2\, dx
\eeq
for all $0<\rho<r<2R$. Let $E$ be a continuous extension operator
\[
E: W^{1, 2}(B_1; \R^N)\to W^{1,2}_0(B_2; \R^N).
\]
The Gagliardo-Nirenberg inequality \cite[Theorem 12.83]{Leoni} ensures that 
\[
\|E(U)\|_{2}\le C\, \|E(U)\|_{1}^{\frac{2}{N+2}}\, \|DE(U)\|_{2}^{1-\frac{2}{N+2}} \qquad \forall \, U\in W^{1,2}(B_1; \R^N)
\]
 for $C=C(N)$, so that by the continuity of $E$
 \[
 \begin{split}
\int_{B_{1}} |U|^2\, dx&\le \int_{B_2} |E(U)|^2\, dx\le C\, \left(\int_{B_2} |E(U)|\, dx\right)^{{\frac{4}{N+2}}}\left(\int_{B_2}|DE(U)|^2\, dx\right)^{{\frac{N}{N+2}}}\\
& \le C\, \left(\int_{B_1} |U|\, dx\right)^{\frac{4}{N+2}}\left(\int_{B_1}|DU|^2\, dx\right)^{{\frac{N}{N+2}}}
\end{split}
\]
for a bigger constant depending on the operator norms of $E:L^1(B_1; \R^N)\to L^1(B_2; \R^N)$ and $E:W^{1,2}(B_1; \R^N)\to W^{1,2}_0(B_2; \R^N)$.
For any $\eps>0$ we apply to the previous estimate Young's inequality with exponents $(N+2)/2$ and $(N+2)/N$ to get 
\[
\int_{B_{1}} |U|^2\, dx\le \eps^2\, \int_{B_1} |DU|^2\, dx+\frac{C}{\eps^N}\left(\int_{B_1}|U|\, dx\right)^2,
\]
 which rescales to 
\beq
\label{sobolev}
\int_{B_{r}} |U|^2\, dx\le (\eps\, r)^2\,  \int_{B_{r}} |DU|^2\, dx+\frac{C}{(\eps\, r)^N}\left(\int_{B_{r}}|U|\, dx\right)^2 \qquad \forall \, U\in W^{1,2}(B_r; \R^N)
\eeq
for some $C=C(N)>0$.
Choose
\[
\eps^2=\frac{1}{2} \frac{(r-\rho)^2}{\widetilde C\, r^2}
\]
and apply \eqref{sobolev} with $U=V$ and such $\eps$  to \eqref{ja} to get
\[
\int_{B_\rho} |DV|_2^2\, dx\le \frac{1}{2}\,  \int_{B_r} |DV|_2^2\, dx+ \frac{C}{(r-\rho)^{N+2}}\left(\int_{B_{r}}|V|\, dx\right)^2
\]
for all $0<\rho<r<2R$. An application of \cite[Lemma 3.1, Ch. 5]{Gia} shows that then
\[
\int_{B_\rho} |DV|_2^2\, dx\le  \frac{C}{(r-\rho)^{N+2}}\left(\int_{B_{r}}|V|\, dx\right)^2
\]
which, for $r=2R$, $\rho=R$, provides \eqref{cac1}.
\end{proof}

\section{A family of $1$-homogeneous functions}
\label{6}

Let $F$ be a $H$-q.\,u.\,c.\,function.  In this section we gather the basic properties of the function 
	\beq
	\label{G}
	G(z)=F(DF^{-1}(z))
	\eeq
 and of the family of Minkowski functionals related to it, namely, 
 	\beq
	\label{minkowski}
	g_k(z)= \inf\{t>0 :\, G(z/t)< k\},
	\eeq
for any $k \ge 0$. Note that, since $DF$ is a homeomorphism of $\RN$, the function in \eqref{G} is well defined.

	\begin{proposition}
	Let $F$ be a $H$-q.\,u.\,c.\,function  fulfilling \eqref{hypF}.
	 Then, $G$   is coercive and
	for any $k\ge 0$ the set $\{G\le k\}$ is  star-shaped with respect to the origin.
	\end{proposition}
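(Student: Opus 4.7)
The plan is to establish the two claims separately, in both cases reducing to properties of $F$ via the identity $G = F \circ DF^{-1}$.

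For coercivity, observe that since $DF$ is a quasiconformal homeomorphism of $\R^N$, its inverse $DF^{-1}$ is also quasiconformal (Proposition~\ref{proqu}-\emph{(2)}) and therefore extends to a homeomorphism of the one-point compactification $\overline{\R}^N$ sending $\infty$ to $\infty$. Hence $|DF^{-1}(z)|\to\infty$ as $|z|\to\infty$, and the coercivity of $F$ (Proposition~\ref{proqu}-\emph{(1)}) yields $G(z)=F(DF^{-1}(z))\to\infty$. Alternatively, one can bypass the compactification by applying the quantitative quasisymmetry bound \eqref{quasisymmetry} directly to $DF^{-1}$ with $z_0=0$, using $DF^{-1}(0)=0$ (which follows from \eqref{hypF}), to obtain a polynomial lower bound $|DF^{-1}(z)|\ge c\,|z|^{1/H}$ for $|z|\ge 1$.

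For star-shapedness, fix $z\in\R^N$ and set $w_t:=DF^{-1}(tz)$ for $t\in[0,1]$, so that $DF(w_t)=tz$ and $G(tz)=F(w_t)$; in particular $w_0=0$ by \eqref{hypF}. The conclusion reduces to $F(w_t)\le F(w_1)$, since then $G(z)\le k$ forces $G(tz)\le k$. Using that $F\in C^1$ is convex, the subgradient inequalities at $w_t$ and $w_1$ give
\[
F(w_1)-F(w_t)\ge (DF(w_t),w_1-w_t)=(tz,w_1-w_t),
\]
\[
F(w_t)-F(w_1)\ge (DF(w_1),w_t-w_1)=-(z,w_1-w_t).
\]
Adding these yields $(1-t)(z,w_1-w_t)\ge 0$, hence $(z,w_1-w_t)\ge 0$ for $t\in[0,1]$. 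Substituting back into the first inequality,
\[
F(w_1)-F(w_t)\ge t\,(z,w_1-w_t)\ge 0,
\]
which is exactly $G(tz)\le G(z)$, proving star-shapedness of $\{G\le k\}$.

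Neither step is particularly delicate: coercivity is a soft homeomorphism/compactness statement, and the monotonicity of $G$ along rays from the origin is essentially the monotonicity of the gradient map combined with the normalisation $DF(0)=0$. The same argument in fact shows that $t\mapsto G(tz)$ is non-decreasing on all of $[0,\infty)$, which will be the natural starting point for proving that the Minkowski functionals $g_k$ defined in \eqref{minkowski} are well-defined, finite and $1$-homogeneous in the subsequent part of this section.
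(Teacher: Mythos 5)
Your proof is correct, and the star-shapedness part takes a genuinely different — and cleaner — route than the paper. The paper first assumes $F\in C^2$ with $\lambda_{\rm min}(D^2F)\ge\mu>0$, computes $DG(z)=(D^2F)^{-1}(DF^{-1}(z))\,z$ via the implicit function theorem, observes $\frac{d}{dt}G(tz)=t\,\bigl((D^2F)^{-1}(DF^{-1}(tz))z,z\bigr)\ge0$, and then passes to the general case by the $C^1_{\rm loc}$ approximation of Proposition~\ref{proqu}\emph{(4)}, using that $DF_n^{-1}\to DF^{-1}$ locally uniformly so that $G_n\to G$ pointwise. You instead work directly with the first-order convexity of $F$: setting $w_t=DF^{-1}(tz)$, the two subgradient inequalities give (equivalently, by monotonicity of the gradient map) $(1-t)(z,w_1-w_t)\ge0$, and feeding this back in yields $F(w_t)\le F(w_1)$, i.e.\ $G(tz)\le G(z)$. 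This avoids both the second-order differentiability and the approximation argument, and works verbatim for any convex $C^1$ coercive $F$ whose gradient is a homeomorphism — which, incidentally, is precisely the generality noted in the remark following this proposition. For coercivity, your primary ``soft'' argument (a homeomorphism of $\R^N$ is proper, so $|DF^{-1}(z)|\to\infty$, and then $F$ coercive does the rest) proves the stated claim; your alternative via quasisymmetry with $z_0=0$ reproduces the paper's quantitative lower bound $G(z)\gtrsim|z|^{1+1/H}$, which the paper in fact derives and which one may want available when the proposition is subsequently applied. The one minor point worth flagging is that the paper's first step uses inequality~\eqref{deltamon2} (i.e.\ $\delta$-monotonicity) to get $G(z)\ge|DF^{-1}(z)|\,|z|/C$ before invoking quasisymmetry, whereas you go directly through the growth of $F$; both are sound.
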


	\begin{proof}
We apply \eqref{deltamon2} with the vector $DF^{-1}(z)$ and use \eqref{hypF} to have
	\[
	G(z)\ge  |DF^{-1}(z)|\, |z|/C.
	\]
Furthermore,  the quasisymmetry \eqref{quasisymmetry} of $DF^{-1}$ together with $DF^{-1}(0)=0$ give
	\[
	|DF^{-1}(z)|\ge \frac{\sup_{|v|=1}|DF^{-1}(v)|}{C\, \eta_H(1/|z|)}.
\]
Gathering the previous inequalities and making use of \eqref{eta01} and \eqref{eta-inv} we  have
\[
G(z)\ge\frac{\sup_{|v|=1}|DF^{-1}(v)|}{C}\,    |z|^{1+1/H}  \quad \text{for all } |z|> 1,
\]
thus proving the coercivity of $G$. In particular for any $ k>0$ the set $\{G\le  k\}$
 is compact with nonempty interior.  
 
 Let us now show that  $\{G\le k\}$ is star shaped with respect to the origin. Since $G(0)=0$ by the normalisation \eqref{hypF}, it suffices to show that for any $z\in \S^{N-1}$ the function $t \mapsto G(t\, z)$ is nondecreasing.  To this end, suppose first that $F\in C^2(\R^N)$ fulfils
 \beq
 \label{sc}
 \lambda_{\rm min}( D^2F(z))\ge \mu
 \eeq
  for some $\mu>0$ independent of $z$. Then $DF$ is a diffeomorphism and it holds
	\beq
	\label{DG}
	DG(z)=z\,  D(DF^{-1})(z)=(D^2F)^{-1}(DF^{-1}(z))\, z,
	\eeq
therefore 
\[
	\frac{d}{dt} G(t\, z)= \left(DG(t\, z), z\right)=  t  \left( (D^2F)^{-1}(DF^{-1}(t\, z))\, z, z\right)	\ge  0,
\]
proving the claim. In the general case, thanks to Proposition \ref{proqu}-{(4)}, we know that any $H$-q.\,u.\,c.\,function $F$ can be approximated in $C^1$ by a sequence $(F_n)$ of $C^2$ and $H$-q.\,u.\,c. functions obeying \eqref{sc}, and such that $DF_n^{-1} \to DF^{-1}$ locally uniformly. Hence, $G_n=F_n\circ DF_n^{-1}\to G$ locally uniformly, too. It follows that $t\mapsto G(t\, z)$ is nondecreasing, being the pointwise limit of a sequence of nondecreasing functions. The proof is thus complete.
 \end{proof}

\begin{remark}
In general, the sets $\{w\in \R^N: G(w)\le k\}=\{DF(z): F(z)\le k\}$ are {\em not} convex, so that the functions $g_k$ defined in \eqref{minkowski} may fail to be  norms. Moreover, by employing the approximation in \eqref{appr0} in the previous proof, we see that the $H$-q.\,u.\,convexity assumption is not needed to prove the second statement of the previous proposition and, given any convex $F\in C^1(\R^N)$ obeying \eqref{hypF}, the sets $\{ DF(z): F(z)\le k\}$, while not convex in general, are always star-shaped with respect to the origin.
\end{remark}

\begin{proposition}
Let $F$ be a $C^2$ and $H$-q.\,u.\,c.\,function obeying \eqref{hypF} and 
\[
\inf_{z\in \R^N} \lambda_{\rm min}(D^2 F(z))>0.
\]
Then, the following holds:
\begin{enumerate}
\item
$g_k$ is a Lipschitz $1$-homogeneous function with
\beq
\label{Dgkappa}
{\rm Lip}\, (g_k)\le H\, \sup_{\S^{N-1}} g_k.
\eeq
\item
For some $C=C(H, N)>0$ it holds
\beq
\label{supinfg}
\sup_{\S^{N-1}} g_k\le C\, \inf_{\S^{N-1}} g_k.
\eeq
\item
There exists $C=C(H, N)>0$ such that for any $k\ge h>0$ 
\beq
\label{claim1}
\inf_{\{G\ge k\}}(g_h-1)\ge \frac{1}{C}\, \min\l\{ \left(\frac{k-h}{h}\right)^H, \left(\frac{k-h}{h}\right)^{\frac{1}{H(H+1)}}\r\}.
\eeq
\end{enumerate}
\end{proposition}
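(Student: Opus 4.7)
For (1), under the strong ellipticity hypothesis the map $DG(z)=(D^2F)^{-1}(DF^{-1}(z))\,z$ is continuous and $(DG(z),z)>0$ for $z\ne 0$, so the implicit function theorem applied to $G(z/g_k(z))=k$ gives $g_k\in C^1(\R^N\setminus\{0\})$. Differentiating this identity yields
\[
Dg_k(z)=\frac{DG(w)}{(w,DG(w))},\qquad w:=z/g_k(z).
\]
Bounding $|DG(w)|\le |w|/\lambda_{\min}(D^2F(DF^{-1}(w)))$ and $(w,DG(w))\ge |w|^2/\lambda_{\max}(D^2F(DF^{-1}(w)))$ together with \eqref{quc} gives $|Dg_k(z)|\le H/|w|=H\,g_k(z/|z|)$, where the last equality uses the $1$-homogeneity of $g_k$ built into its definition. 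This yields \eqref{Dgkappa}.

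For (2), fix $\omega_1,\omega_2\in\S^{N-1}$, set $w_i=\omega_i/g_k(\omega_i)$, and let $y_i=DF^{-1}(w_i)$, so $F(y_i)=G(w_i)=k$. Applying \eqref{estF} (with $\bar z=0$) to $(y_1,y_2)$ and then to $(y_2,y_1)$ bounds $|y_1|/|y_2|$ above and below by constants depending only on $H,N$. The quasisymmetry \eqref{quasisymmetry} of $DF$ at $z_0=0$ then transfers this to $|w_1|\asymp|w_2|$, i.e. $g_k(\omega_1)\asymp g_k(\omega_2)$, which is \eqref{supinfg}.

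For (3), fix $z$ with $G(z)\ge k$, set $\omega:=z/|z|$, and let $R_j$ be the unique radius with $G(R_j\omega)=j$, which exists by strict radial monotonicity of $G$. Monotonicity also gives $g_h(z)\ge R_k/R_h$, so it suffices to lower-bound $R_k/R_h-1$. Set $y_j:=DF^{-1}(R_j\omega)$, so $F(y_j)=j$ and $DF(y_j)=R_j\omega$. I would split on $R_k/R_h$ against a threshold $C_0=C_0(H,N)$. In the big regime $R_k/R_h\ge C_0$, \eqref{estF} applied to $(y_h,y_k)$ yields $1+\sigma\le C(|y_k|/|y_h|)\eta_H(|y_k|/|y_h|)$; a short argument based on \eqref{quasisymmetry} for $DF$ makes $|y_k|/|y_h|\ge 1$ in this range, so $|y_k|/|y_h|\ge c(1+\sigma)^{1/(H+1)}$, and a second application of \eqref{quasisymmetry} for $DF$ upgrades this to $R_k/R_h\ge c(1+\sigma)^{1/(H(H+1))}$. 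This yields the $\sigma^{1/(H(H+1))}$ branch for large $\sigma$, while for bounded $\sigma$ the trivial bound $R_k/R_h-1\ge C_0-1$ already dominates. In the small regime $R_k/R_h<C_0$, the $\delta$-monotonicity of $DF$ applied with $DF(y_k)-DF(y_h)=(R_k-R_h)\omega$ gives $(\omega,y_k-y_h)\ge\delta|y_k-y_h|$, while convexity of $F$ produces $(\omega,y_k-y_h)\ge\sigma h/R_k$ and $|y_k-y_h|\le\sigma h/(\delta R_h)$. Applying \eqref{quasisymmetry} to $DF^{-1}$ with $z_0=R_h\omega$, $w=0$ (using $DF^{-1}(0)=0$) gives $|y_k-y_h|/|y_h|\le C\eta_H((R_k-R_h)/R_h)$; combining with \eqref{deltamon2} in the form $R_h|y_h|\le Ch$ reduces everything to $\sigma/C'\le\eta_H((R_k-R_h)/R_h)$, and since here $(R_k-R_h)/R_h$ is bounded and (after adjusting $C_0$) below $1$, the branch $\eta_H(t)=t^{1/H}$ applies, yielding $(R_k-R_h)/R_h\ge c\sigma^H$.

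The main obstacle I anticipate is the calibration of the case split: one must check that in the small regime $\sigma$ is automatically bounded, via the quasisymmetric comparability of $|y_k|$ and $|y_h|$ forced by $R_k/R_h$ being bounded, so that the trivial bound available in the big regime and the $\sigma^H$ bound in the small regime together dominate $C^{-1}\min(\sigma^H,\sigma^{1/(H(H+1))})$ uniformly in $\sigma$, with constants depending only on $H$ and $N$.
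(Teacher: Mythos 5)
Your proofs of (1) and (2) are correct and essentially identical to the paper's: the same differentiation of $G(z/g_k(z))=k$ with the eigenvalue bounds on $(D^2F)^{-1}$ for \eqref{Dgkappa}, and the same two-way application of \eqref{estF} on the level set $\{F=k\}$ followed by quasisymmetry of $DF$ for \eqref{supinfg}. For (3) your reduction is the same as the paper's --- both compare the two points $R_k\omega$ and $R_h\omega$ on a single ray through their preimages $y_k=DF^{-1}(R_k\omega)$, $y_h=DF^{-1}(R_h\omega)$ --- but the estimates differ. The paper avoids any case split: it bounds $k-h=F(y_k)-F(y_h)$ by integrating $|DF|$ along the segment $[y_h,y_k]$ and controlling the integrand by quasisymmetry, which combined with \eqref{deltamon2} yields the single inequality $(k-h)/h\le C\,\eta_{1,H+1}(|y_k-y_h|/|y_h|)$; composing with the quasisymmetric transfer back to the $\tilde x$-side produces \eqref{claim1} in one stroke via the composition rules for the $\eta_{a,b}$. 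Your route instead splits on $R_k/R_h$ and uses convexity inequalities plus $\delta$-monotonicity in the small regime and \eqref{estF} in the large regime. This works: the calibration worry you flag is genuine but resolvable --- in the small regime quasisymmetry of $DF^{-1}$ bounds $|y_k|/|y_h|$ and hence, via \eqref{estF}, bounds $\sigma=(k-h)/h$; in the large regime, if $|y_k|/|y_h|<1$ then \eqref{estF} forces $\sigma\le C$ and the trivial bound $R_k/R_h-1\ge C_0-1$ suffices. Note also that your small-regime conclusion is really $(R_k-R_h)/R_h\ge\eta_H^{-1}(\sigma/C)=\frac1C\min\{\sigma^H,\sigma^{1/H}\}$, which dominates the right-hand side of \eqref{claim1} without needing $(R_k-R_h)/R_h\le 1$; so adjusting $C_0$ downward is unnecessary, and the only real role of the large regime is to handle large $\sigma$ when $R_h/R_k$ is too small for your lower bound on $|y_k-y_h|/|y_h|$. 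The paper's unified argument is shorter; yours is more elementary in that it replaces the integral estimate on $F(y_k)-F(y_h)$ by pure convexity inequalities.
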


\begin{proof}
Since $DG$ given in \eqref{DG} doesn't vanish outside the origin,  the implicit function theorem implies that the boundary $\partial \{G\le k\}=\{G=k\}$ is a $C^1$-hypersurface
and therefore $g_k$ is $C^1$ outside the origin. By construction $g_k$ is positively $1$-homogeneous, hence it fulfils
	\begin{align}
	\label{prog1}
	\big(Dg_ k(z), z\big)&=g_ k(z), \quad  \forall \, z\ne 0, \\
	\label{prog2}
	 Dg_ k(\lambda\, z)&=Dg_ k(z), \quad \forall \, \lambda>0, z\ne 0, 
	 \end{align}
	 while by construction it holds
	 \beq
	 \label{prog3}
	 G\left(\frac{z}{g_ k(z)}\right)=k \qquad \forall \, z\ne 0.
	\eeq
From the last property, in particular, it holds $g_ k\equiv 1$ on $ \{G= k\}$,  and therefore the Lagrange multiplier rule gives $Dg_ k(z)=\alpha(z)\, DG(z)$ for some $\alpha(z)\ge 0$ and all $z\in \{G= k\}$. 
\color{black}
This, together with  \eqref{prog1}, implies that on  $\{G= k\}$ we have
	\[
	1=g_ k(z)=(Dg_ k(z), z)= \alpha(z)\, (DG(z), z),
	\]
and in turn 
	 \[
	 Dg_ k(z)=\frac{DG(z)}{\big(DG(z), z\big)}.
	 \]
Let $z\in \{G= k\}$. Thanks to \eqref{DG} we get 
	 \beq
	 \label{dgk}
	 |Dg_ k(z)|=\frac{|DG(z)|}{\big(DG(z), z\big)}= \frac{|z \, (D^2F)^{-1}(w)|}{\big(z\, (D^2F)^{-1}(w), z\big)}, \qquad \text{with } w=DF^{-1}(z).
	 \eeq
 Let $\Lambda(w)$ and $\lambda(w)$ be the maximum and minimum  eigenvalues
  of $D^2F$ at $w$, respectively. Then it holds
	 \beq
	 \label{eigenv}
	 |z\, (D^2F)^{-1}(w)|\le \frac{1}{\lambda(w)}|z| \quad \text{as well as} \quad \big(z\, (D^2F)^{-1}(w), z\big)\ge \frac{1}{\Lambda(w)}\, |z|^2.
	 \eeq
Therefore, thanks to \eqref{dgk}, \eqref{eigenv}, and \eqref{quc} we have
	\beq
	\label{dgk1}
	 |Dg_ k(z)|= \frac{|z \, (D^2F)^{-1}(w)|}{\big(z\, (D^2F)^{-1}(w), z\big)} \le \frac{|z| \Lambda(w)}{|z|^2 \lambda(w)}  \le \frac{H}{|z|} \qquad \text{on $\{G= k\}$}.
	 \eeq
Using \eqref{prog2} and \eqref{prog3} gives
\[
	|Dg_ k(z)|= \l|Dg_{ k}\l(\frac{z}{g_{ k}(z)}\r)\r|.
	\]
Therefore we can use \eqref{dgk1} with the vector $\displaystyle \frac{z}{g_k(z)}$, thus obtaining
	\[
	|Dg_ k(z)| \le H\, \frac{g_ k(z)}{|z|}\le H\, \sup_{\S^{N-1}} g_ k\qquad \forall \, z\ne 0.
	\]
This proves \eqref{Dgkappa}.
	To verify \eqref{supinfg}, notice that under the stated assumptions on $F$ the map $t\mapsto G(t\, z)$ is strictly increasing on $]0, +\infty[$, hence  $\{g_k=1\}=\{G=k\}$.
	  By the $1$-homogeneity of $g_k$, it  then holds
	 \[
	\sup_{\S^{N-1}}g_ k=\sup_{\{G= k\}}|z| \quad \text{as well as} \quad \inf_{\S^{N-1}}g_ k=\inf_{\{G= k\}}|z|.
	 \]
Since $\{G\le  k\}$ is star-shaped with respect to the origin, we can choose  $\bar x$ and $\bar y$ in $\{G=  k\}$ such that
	\[
	|\bar x|=\sup_{\{G=  k\}}|z| \quad \text{as well as} \quad |\bar y|=\inf_{\{G=k\}}|z|.
	\]
	By  \eqref{quasisymmetry} applied to $DF$ and \eqref{hypF} we have
	\beq
	\label{gas2}
\frac{\sup_{\S^{N-1}}g_ k}{\inf_{\S^{N-1}}g_ k}=\frac{|\bar x|}{|\bar y|}=\frac{|DF(DF^{-1}(\bar x))|}{|DF(DF^{-1}(\bar y))|}\le C\, \eta_H\left(\frac{|DF^{-1}(\bar x)|}{|DF^{-1}(\bar y)|}\right).
\eeq
By the definition of $G$, it holds $DF^{-1}(\bar x), \,  DF^{-1}(\bar y) \in \{F=  k\}$, hence \eqref{estF} gives
\[
1= \frac{F(DF^{-1}(\bar y))}{F(DF^{-1}(\bar x))}\le C\, \frac{|DF^{-1}(\bar y)|}{|DF^{-1}(\bar x)|}\, \eta_H\left(\frac{|DF^{-1}(\bar y)|}{|DF^{-1}(\bar x)|}\right)=C\, \eta_{1+H, 1+1/H}\left(\frac{|DF^{-1}(\bar y)|}{|DF^{-1}(\bar x)|}\right)
\]
so that
\[
\frac{|DF^{-1}(\bar x)|}{|DF^{-1}(\bar y)|}\le \frac{1}{\eta_{1+H, 1+1/H}^{-1}(1/C)}.
\]
Inserting this estimate into \eqref{gas2} gives  us the conclusion.
%
%

Finally, to prove the last assertion, let us fix $k>h$, choose $\tilde{x}$ such that
\[
G(\tilde x)=k  \quad \text{as well as} \quad g_h(\tilde x)=\inf_{G\ge k} g_h, 
\]
and set
\[
\tilde{y}=\frac{\tilde x}{g_h(\tilde x)}.
\]
Then in holds $g_h(\tilde y)=1$, so that $\tilde y\in \{G=h\}$. Set furthermore
\beq
\label{hk0}
x:=DF^{-1}(\tilde x)\in \{F= k\} \quad \text{as well as} \quad y:=DF^{-1}(\tilde y)\in \{F=h\}.
	\eeq
By the definition of $\tilde x$ and $\tilde y$ and the $1$-homogeneity of $g_h$ it follows that
	\beq
	\label{hk1.1}
	\inf _{\{G\ge  k\}}(g_h-1)= \frac{g_h(\tilde x)-g_h(\tilde y)}{g_h(\tilde y)} {=\frac{|\tilde x-\tilde y|}{|\tilde y|}.}
	\eeq
Using the distortion estimate \eqref{quasisymmetry} to the map $DF^{-1}$ gives
	\[
	\frac{|x-y|}{|y|}\le C \, \eta_H\left(\frac{|\tilde x-\tilde y|}{|\tilde y|}\right)
	\]
 so that, by \eqref{dis-eta} for the function $\eta_H^{-1}$ we obtain
	\[
	\frac{|\tilde x- \tilde y|}{|\tilde y|} \ge \eta_H^{-1} \l(\frac{1}{C} \frac{|x-y|}{|y|}\r) \ge \frac{1}{C}\,  \eta_H^{-1} \l(\frac{|x-y|}{|y|}\r), 
	\]
which inserted into \eqref{hk1.1} reads as
\beq
	\label{step1}
	\inf_{\{G \ge  k\}} (g_h-1) \ge \frac{1}{C}\,  \eta^{-1}_H \l(\frac{|x-y|}{|y|}\r).
	\eeq
By \eqref{hk0} we have
	\beq
	\label{step1.1}
	\begin{split}
	 k-h&=F(x)-F(y)\le \int_0^1 |DF(y+t\, (x-y))|\, |x-y|\, dt\\
	 &\le |x-y|\, \left[\int_0^1 |DF(y+t\, (x-y))-DF(y)|\, dt+|DF(y)|\right].
	 \end{split}
	\eeq
To estimate the rightmost integral in the above inequality we use \eqref{quasisymmetry} for the map $DF$  (recalling that $DF(0)=0$) and \eqref{basiceta} to have
	\[
	\begin{split}
	\int_0^1 |DF(y+t\, (x-y))-DF(y)|\, dt&\le C\, |DF(y)|\, \int_0^1\eta_H\left(t\, \frac{|x-y|}{|y|}\right)\, dx\\
	&\le C\, |DF(y)|\, \eta_H\left(\frac{|x-y|}{|y|}\right)\, \int_0^1\eta_H(t)\, dt\\
	&\le \frac{CH}{H+1}\, |DF(y)|\, \eta_H\left(\frac{|x-y|}{|y|}\right).
	\end{split}
	\]
	Therefore \eqref{step1.1} simplifies to
	\[
	k-h\le C\, |DF(y)|\, |x-y|\, \left[1+\eta_H\left(\frac{|x-y|}{|y|}\right)\right].
	\]
	On the other hand,  \eqref{deltamon2} implies
	\[
	h=F(y) \ge    |y|\, |DF(y)|/C.
	\]
Gathering the previous two inequalities gives
	\beq
	\label{step1.12}
	\frac{ k-h}{h}\le C\,  \frac{|x-y|}{|y|}\,\left[1+\eta_H\left(\frac{|x-y|}{|y|}\right)\right].  
	\eeq
Since 
\[
 t\,  (1+\eta_H(t))\le 2\, \eta_{1, H+1}(t),
 \]
we can rewrite \eqref{step1.12} as
\[
\frac{|x-y|}{|y|}\ge \eta_{1, H+1}^{-1}\l(\frac{k-h}{C\, h}\r)
\]
and recalling \eqref{step1} gives
\[
\inf_{\{G \ge  k\}} (g_h-1) \ge \frac{1}{C}\,  \eta^{-1}_H \l(\eta_{1, H+1}^{-1}\l(\frac{k-h}{ h}\r)\r)=\frac{1}{C} \, \eta^{-1}_{1/H, H(H+1)}\l(\frac{k-h}{ h}\r),
\]
where we used \eqref{dis-eta}, \eqref{compo} to clean up the estimate.
Using \eqref{eta-inv} gives \eqref{claim1} and then the conclusion. 
%
%
 \end{proof}
%
%

\section{Proof of the main result}
\label{7}
\begin{theorem}
Let $F$ be a $H$-q.\,u.\,c.\,function obeying \eqref{hypF} and let $u$ be a local  minimiser for $J$ in $\Omega$ in the sense of Definition \ref{def:local-min}. Then, there exists a constant $C=C(H, N)>0$ such that,
 for any $B_R$ with $B_{2R}\subseteq \Omega$, $u$ is locally Lipschitz and satisfies the estimate
	\beq
	\label{stimafinale}
	\sup_{B_{R/2}}|F(Du)|\le C\,  \intmed_{B_{2R}}F(Du)\, dx.
	\eeq
	\end{theorem}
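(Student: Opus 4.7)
The plan is to implement the blueprint sketched in the introduction: a modified De Giorgi iteration on the superlevel sets of $F(Du)$, reformulated through the Minkowski functionals $g_k$ of Section~\ref{6}, fed by the vectorial Caccioppoli inequality of Lemma~\ref{caclemma} and closed via Proposition~\ref{prosob}.

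First I would reduce to the smooth setting. Lemma~\ref{regularized} yields smooth strongly elliptic $H$-q.u.c.\ approximants $F_n$ together with smooth minimisers $u_n$ on $B_{2R}$ and the convergences collected in \eqref{prosobe}. If \eqref{stimafinale} can be established uniformly in $n$ with a constant depending only on $H$ and $N$, then \eqref{prosobe1} handles the right-hand side, while the locally uniform convergence $DF_n^{-1}\to DF^{-1}$ from Proposition~\ref{proqu}-(4) combined with the Rellich-strong convergence of the stress fields in \eqref{prosobe3} forces $Du_n\to Du$ a.e., so that $F_n(Du_n)\to F(Du)$ a.e., and Fatou propagates the sup-bound to the limit. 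From this point I work with smooth $F$ satisfying $\inf\lambda_{\rm min}(D^2F)>0$ and smooth $u$ on $B_{2R}$.

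Next I would invoke Lemma~\ref{caclemma} with the constant affine choice $\ell\equiv k$, which yields
\[
\int_{B_\rho\cap\{F(Du)\ge k\}}|DV|_2^2\,dx \le \frac{(\pi H)^2}{(R-\rho)^2}\int_{B_R\cap\{F(Du)\ge k\}}|V|^2\,dx.
\]
Because $V=DF(Du)$ and $F(Du)=G(V)$, the set $\{F(Du)\ge k\}$ coincides with $\{g_k(V)\ge 1\}$. Applying the chain rule to $(g_k(V)-1)_+$ (legitimate thanks to the $C^1$-smoothness of $g_k$ away from the origin in the regularised setting), using the Lipschitz control \eqref{Dgkappa} together with the comparability \eqref{supinfg} (which forces $|V|\le C\,g_k(V)$ on $\{g_k(V)\ge 1\}$), this translates into the scalar Caccioppoli
\[
\int_{B_\rho}|D(g_k(V)-1)_+|^2\,dx \le \frac{C}{(R-\rho)^2}\int_{B_R}g_k(V)^2\,\chi_{\{g_k(V)\ge 1\}}\,dx.
\]
I then plan to run a De Giorgi iteration on decreasing radii $\rho_n\downarrow R/2$ and increasing levels $k_n\uparrow k^\star$, tracking the coupled sequences
\[
a_n = \int_{B_{\rho_n}}(g_{k_n}(V)-1)_+^2\,dx, \qquad A_n = |\{F(Du)\ge k_n\}\cap B_{\rho_n}|.
\]
The main obstacle is precisely that the scalar Caccioppoli carries $g_k(V)^2$ in place of the usual $(g_k(V)-1)_+^2$; I intend to absorb this via $g_k(V)^2\le 2(g_k(V)-1)_+^2+2$ on $\{g_k(V)\ge 1\}$, which couples $a_n$ to $A_n$. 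Propagation from one level to the next is achieved through \eqref{claim1} applied with $k=k_{n+1}$, $h=k_n$, yielding $g_{k_n}(V)-1\ge\delta_n>0$ on $\{F(Du)\ge k_{n+1}\}$ and hence a Chebyshev step from $a_n$ to $A_{n+1}$. Together with Sobolev embedding on $(g_{k_n}(V)-1)_+$ and Hölder, these ingredients should assemble into a recursion of the form $a_{n+1}+\delta_n^{2^*}A_{n+1}\le C\,4^n(a_n+A_n)^{1+\varepsilon}$, driving both sequences to zero as soon as the starting data are small enough. The smallness threshold fixes $k^\star$ proportional to $\|V\|_{L^2(B_R)}^2$, giving the a.e.\ bound $F(Du)\le C\|V\|_{L^2(B_R)}^2$ on $B_{R/2}$.

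Finally I would close the estimate using Proposition~\ref{prosob}, which provides $\|V\|_{L^2(B_R)}^2\le C\,R^N\,\eta_{\frac{2H}{H+1},\frac{2}{H+1}}\bigl(\intmed_{B_{2R}}F(Du)\,dx\bigr)$. Straight substitution produces a non-linear control $\sup F(Du)\le C\,\eta(\intmed F(Du))$, whereas \eqref{stimafinale} demands linear dependence; recovering the sharp bound calls for an additional bootstrap---the ``refinement of the results in \cite{GM}'' alluded to in the introduction---leveraging the stability of $H$-q.u.c.\ integrands under the rescalings $F\mapsto cF(\lambda\,\cdot)$. I thus expect the two delicate points of the argument to be (i) the coupled recursion compensating the weaker Caccioppoli through \eqref{claim1}, and (ii) the final upgrade from $\eta$-control to the linear average bound.
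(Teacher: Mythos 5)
Your proposal is correct and follows essentially the same route as the paper: regularisation via Lemma \ref{regularized}, the Caccioppoli inequality of Lemma \ref{caclemma} with $\ell\equiv k$ translated into a scalar inequality for $(g_k(V)-1)_+$ via \eqref{Dgkappa} and \eqref{supinfg}, a De Giorgi iteration whose measure term is controlled through \eqref{claim1}, and closure via Proposition \ref{prosob}. The only differences are cosmetic: the paper absorbs $|A(k,R)|$ directly into $\int_{A(h,R)}(g_h(V)-1)^2\,dx$ so as to run a single-sequence iteration rather than your coupled $(a_n,A_n)$ recursion, and the final upgrade from the $\eta_H$-control to the linear bound \eqref{stimafinale} is exactly the rescaling $u_\lambda=\lambda u$, $F_\lambda(z)=F(z/\lambda)$ you anticipate, with $\lambda$ fixed by the intermediate value theorem so that the normalised average energy equals $1$.
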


\begin{proof}
Let us suppose for the moment that {both $F$ and $u$ are smooth in $B_{2R}$.}
Moreover, by considering $F/i_F$ (see \eqref{defif}), we can also suppose  that
\beq
\label{iffin}
\inf_{|z|=1} |DF(z)|=1.
\eeq

Fix $R>r>0$ and set $\bar R=\dfrac{R+r}{2}$. Choose $\varphi\in C^\infty_c(B_{\bar R}, [0,1])$  such that 
\beq
\label{prop-varphi}
{\rm supp}\, (\varphi)\subseteq B_{\bar R}, \qquad \varphi\lfloor_{B_r}\equiv 1, \qquad  |D\varphi|\le \frac{C}{R-r}.
\eeq
For any $k\ge 0$ we consider the Minkowski functional of $\{G \le k\}$ defined as in \eqref{minkowski}, with $G$ given by \eqref{G}, which satisfies by construction 
\beq
\label{AG2}
g_k(z)\le 1\quad \Longleftrightarrow \quad \text{ $z\in \{G\le k\}$}\quad \Longleftrightarrow \quad F(DF^{-1}(z))\le k.
\eeq
Let $V$ be given by \eqref{def-V}. It holds  $\varphi \, (g_ k(V)-1)_+\in W^{1,2}_0(B_{\bar R})$ and thanks to \eqref{AG2} 
\[
(g_k(V)-1)_+\equiv 0 \qquad \text{ on }   B_{\bar R}\setminus A\left( k, \bar R\right),
\]
 where we set
\[
A(k, \rho)=\{x\in B_\rho: F(Du(x))\ge k\}=\{x\in B_\rho:G(V(x))\ge k\},
\]
i.\,e.\,equation \eqref{A-kr} for the affine function $\ell(z)\equiv k$.  Then the Sobolev embedding and the chain rule give
	\begin{equation}
	\label{ineq}
	\begin{split}
	 \left(\int_{A( k, r)}|g_ k(V)-1|^{2^*}\, dx\right)^{\frac{2}{2^*}}&  \le  \left(\int_{B_{\bar R}}|\varphi\, (g_ k(V)-1)_+|^{2^*}\, dx\right)^{\frac{2}{2^*}} \\
	&\le C \int_{B_{\bar R}}|D(\varphi\, (g_ k(V)-1)_+)|^{2}\, dx\\
	&\le  C\int_{B_{\bar R}}|(g_ k(V)-1)_+|^{2}|D\varphi|^2\, dx\\
	&\quad +C \int_{B_{\bar R}}|D(g_ k(V)-1)_+)|^{2}\varphi^2\, dx.
	\end{split}
	\end{equation}
In order to estimate the above right-hand side,  for the first term we use \eqref{AG2} and \eqref{prop-varphi} to achieve
	\beq
	\label{temp3.0}
	\int_{B_{\bar R}} |(g_{ k}(V)-1)_+|^2 |D\varphi|^2 \, dx   \le \frac{C}{(R-r)^2} \int_{A( k, R)} (g_{ k}(V)-1)^2 \; dx,
	\eeq
while for the second term we apply the chain rule and inequalities \eqref{AG2} and \eqref{Dgkappa}  to have
\beq
\label{temp3.00}
\begin{split}
	\int_{B_{\bar R}}|D(g_ k(V)-1)_+)|^{2}\varphi^2\, dx \, & { \le \int_{A( k, \bar R)} |D g_ k(V)|^2 |DV|_2^{2}\, dx} \\
	&\le H^2\, \sup_{\S^{N-1}} g_ k^2\int_{A( k, \bar R)}|DV|_2^2\, dx.
	\end{split}
\eeq
To further estimate  the last integral, we use the Caccioppoli inequality \eqref{cac} with $\rho=\bar R< R$, which  results in
	 \beq
	 \label{temp3.1}
	H^2\, \sup_{\S^{N-1}} g_ k^2\int_{A( k, \bar R)}|DV|_2^2\, dx \le \frac{C}{(R-r)^2}\,  \sup_{\S^{N-1}} g_k^2\, \int_{A( k, R)} |V|^2\, dx.
	 \eeq
Moreover, we observe that $g_k(V)\ge 1$ on $A( k, R)$, hence the $1$-homogeneity of $g_k$ implies that
\beq
	\label{temp3.11}
	 |V|=\frac{|V|}{g_k(V)}\, g_k(V)  \le    \sup_{z\ne 0} \frac{|z|}{g_{ k}(z)} \, g_{ k}(V) \le \frac{g_ k (V)}{\inf_{\S^{N-1}} g_ k}.
\eeq
From \eqref{temp3.1}, \eqref{temp3.11} and  \eqref{supinfg} we have that  \eqref{temp3.00}   simplifies to 
	 \[
	 \int_{B_{\bar R}}|D(g_ k(V)-1)_+)|^{2}\varphi^2\, dx\le  \frac{C}{(R-r)^2}\int_{A( k, R)} g^2_k(V)\, dx,
	  \]
	  which, inserted into \eqref{ineq} together with \eqref{temp3.0}, gives
	 \beq
	 \label{dg4}
	 \begin{split}
	 \left(\int_{A( k, r)}|g_ k(V)-1|^{2^*}\, dx\right)^{\frac{2}{2^*}}&\le \frac{C}{(R-r)^2} \int_{A( k,  R)}  g_ k^2(V)+(g_ k(V)-1)^2\, dx\\
	 &\le \frac{C}{(R-r)^2}\left(\int_{A( k,  R)} (g_ k(V)-1)^2\, dx + {|A( k, R)|} \right),
	 \end{split}
	 \eeq
 with a constant $C=C(H)$.
 
	 Let now $0<h<k$ and observe that
	\[
g_h(V(x))-1\ge \inf _{\{G\ge  k\}}(g_h-1) \qquad \text{for $x\in A(k, R)$},
	\]
	therefore \eqref{claim1} gives
	\[
	\frac{1}{C}\, \min\left\{\left(\frac{k}{h}-1\right)^H, \left(\frac{k}{h}-1\right)^{\frac{1}{H(H+1)}}\right\}^2 \, |A(k, R)|\le \int_{A(k, R)} (g_h(V)-1)^2\, dx,
	\]
	so that
	 \beq
	 \label{claim-A}
	|A( k, R)|\le \frac{C}{ \left(\frac{k}{h}-1\right)^{2H}}\, \int_{A( k, R)}(g_h(V)-1)^2\, dx\qquad \text{if}\quad  h<k<2\,h.
	\eeq
Inserting this estimate into \eqref{dg4} gives for $0<h<  k<2\, h$
	\beq
	\label{dg6}
	\begin{split}
	& \left(\int_{A( k, r)}|g_ k(V)-1|^{2^*}\, dx\right)^{\frac{2}{2^*}} \\
	&\le \frac{C}{(R-r)^2} \int_{A( k,  R)} (g_ k(V)-1)^2 +\frac{C}{\left(\frac{k}{h}-1\right)^{2H}} \, (g_h(V)-1)^2\, dx\\
	&\le  \frac{C}{\left(\frac{k}{h}-1\right)^{2H}(R-r)^2} \int_{A(h,  R)} (g_h(V)-1)^2 \, dx,
	\end{split}
	\eeq
where in the second inequality we used the fact that
	\beq
	\label{kha}
	A( k, R)\subseteq A(h, R) \quad \text{as well as} \quad g_h\ge g_ k\quad \text{for all $h\le  k$}
	\eeq
 and the bound
 \[
 \left(\frac{k}{h}-1\right)^{-2H}\ge 1 \qquad \text{for $0<h<k<2h$}.
 \]
Estimate \eqref{dg6} in conjunction with \eqref{claim-A} allows for a  De Giorgi-type iteration, whose proof we include here for completeness. For $ 0<h<k<2h$, we apply H\"older's inequality, \eqref{dg6}, \eqref{claim-A} and  \eqref{kha} to have
	\beq
	\label{dgf}
	\begin{split}
	 \int_{A( k,  r)} (g_ k(V)-1)^2 \, dx & \le |A( k, r)|^{\frac{2}{N}} \left(\int_{A( k, r)}|g_ k(V)-1|^{2^*}\, dx\right)^{\frac{2}{2^*}}   \\
	 &\le  \frac{C\, |A(k, r)|^{\frac{2}{N}}}{\left(\frac{k}{h}-1\right)^{2H}(R-r)^2} \int_{A(h,  R)} (g_h(V)-1)^2 \, dx\\
	 &\le \frac{C }{{\left(\frac{k}{h}-1\right)^{2H(1+2/N)}}(R-r)^2} \left(\int_{A(h,  R)} (g_h(V)-1)^2 \, dx\right)^{1+\frac{2}{N}}.
	 \end{split}
	 \eeq
Let $M>0$ to be chosen and set for $n\ge 0$
	\[
	 k_n= M\, (1-2^{-(n+1)}),  \qquad r_n=\frac{R}{2}+\frac{R}{2^{n+1}}, \qquad X_n=\int_{A( k_n,  r_n)} (g_{ k_n}(V)-1)^2 \, dx,
	\]
so that $\lim_{n \to \infty} k_n= M$ and $\lim_{n\to \infty} r_n= R/2$. An elementary computation shows that
\[
k_n\le k_{n+1}\le 2\,  k_n, \qquad \frac{k_{n+1}}{k_n}-1\ge \frac{1}{2^{n+2}}, \qquad (r_{n}-r_{n+1})^2=\frac{R^2}{4^{n+2}},
\]
therefore \eqref{dgf} with the viable choice $k=k_{n+1}$ and $h=k_n$ reads as
\[
X_{n+1}\le \frac{C}{R^2} \, b^n\, X_{n}^{1+2/N},
\]
for some positive  $b= b(H, N)$ and $C=C(H, N)$.

Thanks to \cite[Lemma 7.1]{Giusti},  it holds
	\beq
	\label{dg9}
 	X_0\le \frac{R^N}{C^{N/2}\, b^{N^2/4}} \quad \Longrightarrow\quad   \lim_{n \to \infty} X_n=0.
	\eeq
Moreover, from $X_n\to 0$ it follows that
	\[
	\int_{A(M, R/2)}(g_M(V)-1)^2\, dx=0
	\]
which in turn by \eqref{AG2}   forces $F(Du)\le M$ a.\,e.\,in $B_{R/2}$.  
Let us use this information to write explicitly the implication in \eqref{dg9}. Since  $r_0=R$ and $ k_0=M/2$, it reads as
	\beq
	\label{fin}
 	\int_{A(M/2, R)} (g_{M/2}(V)-1)^2\, dx\le c\, R^N \quad \Longrightarrow\quad  \sup_{B_{R/2}} F(Du)\le M,
	\eeq
for some $c=c(H, N)>0$.
It remains to choose $M>0$ in such  a way that the first inequality in \eqref{fin} holds true.
We aim to obtain an upper bound for $g_{M/2}$ in terms of an upper bound for $G$. To get the latter,   we recall that by \eqref{hypF} the following convexity inequality
	\[
	F(z)\le\left( DF(z), z\right) \qquad \forall \, z\in \R^N
	\]
	holds true.
We choose  $z=DF^{-1}(y)$ and then use the quasisymmetry \eqref{quasisymmetry} of $DF^{-1}$ to obtain
	\[
	G(y)\le \left(y,  DF^{-1}(y)\right)\le |DF^{-1}(y)|\, |y| \le   C  \, |DF^{-1}(w)|\, |y|\, \eta_H\left(\frac{|y|}{|w|}\right),
	\]
 for some $C=C(H, N)>0$ and for all $y\in \R^N$,  $w\in \R^N\setminus\{0\}$.
Choose $w=DF(z)$ for arbitrary $z$ such that  $|z|=1$  to get
 \[
 G(y)\le C\, |y|\, \inf_{|z|=1}\eta_H\left(\frac{|y|}{|DF(z)|}\right).
\]
Moreover,  the normalisation \eqref{iffin} implies that $1/|DF(z)| \le 1$ for all $z$ such that $|z|= 1$ which, together with the monotonicity of $\eta_H$, implies that the previous inequality reads as
\[
G(y)\le C\,|y|\,  \eta_H(|y|)=C\, \eta_{H+1, 1+\frac{1}{H}}(|y|)\qquad \forall \, y\in \R^N.
\]
Choosing  $\displaystyle y= \frac{z}{g_{M/2}(z)}$  yields, by the very definition of $g_{M/2}$, 
	\[
	\frac{M}{2}=G\left(\frac{z}{g_{M/2}(z)}\right)\le C \,  \eta_{H+1, 1+\frac{1}{H}}\left(\frac{|z|}{g_{M/2}(z)}\right),
	\]
which, solved with respect to $g_{M/2}(z)$, gives through   \eqref{eta01} and \eqref{dis-eta}
	\[
	g_{M /2}(z)\le \frac{C \, |z|}{\eta^{-1}_{H+1, 1+\frac{1}{H}}(M)}\le C\, |z|\,\eta_{\frac{1}{H+1}, \frac{H}{H+1}}(1/M).
	\]
	Therefore
	\[
	\int_{A(M/2, R)} (g_{M/2}(V)-1)^2\, dx\le \int_{B_R} g_{M/2}^2(V)\, dx\le  C\, \eta_{\frac{1}{H+1}, \frac{H}{H+1}}^2(1/M)\int_{B_R}|V|^2\, dx.
	\]
	In order to satisfy the assumption of \eqref{fin}, we can then choose $M$ so that 
	\[
	C\, \eta_{\frac{1}{H+1}, \frac{H}{H+1}}^2(1/M)\int_{B_R}|V|^2\, dx\le c\, R^N, 
	\]
	i.\,e., rearranging and using \eqref{eta01} again,
	\[
	\left(\intmed_{B_R}|V|^2\, dx\right)^{\frac{1}{2}}\le \frac{c}{\eta_{\frac{1}{H+1}, \frac{H}{H+1}}(1/M)}=c\, \eta_{H+1, 1+\frac{1}{H}}^{-1}(M)
	\]
	for some $c=c(H, N)>0$, which is readily solved through \eqref{dis-eta} as
	\[
	M \ge  C\, \eta_{H+1, 1+\frac{1}{H}}\left(\left(\intmed_{B_R}|V|^2\, dx\right)^{\frac{1}{2}}\right).
	\]
	Now recall that by \eqref{proso} it holds
	\[
	\left(\intmed_{B_R}|V|^2\, dx\right)^{\frac{1}{2}}\le C\, \eta_{\frac{H}{H+1}, \frac{1}{H+1}}\left(\intmed_{B_{2R}} F(Du)\, dx\right).
	\]
Then, thanks to \eqref{compo} and again \eqref{dis-eta} it is sufficient to require
	\beq
	\label{dis-M}
	M\ge C\, \eta_{H+1, 1+\frac{1}{H}}\circ\eta_{\frac{H}{H+1}, \frac{1}{H+1}}\left(\intmed_{B_{2R}} F(Du)\, dx\right)=C\, \eta_H\left(\intmed_{B_{2R}} F(Du)\, dx\right).
	\eeq
	Choosing $M$ such that the equality in \eqref{dis-M} holds true gives, thanks to \eqref{fin}, that $F(Du)\le M$ in $B_{R/2}$, with the  estimate
	\[
	\sup_{B_{R/2}}|F(Du)|\le C\, \eta_H \left(  \intmed_{B_{2R}}F(Du)\, dx\right).
	\]
	We can then remove assumption \eqref{iffin}  to get, for a general $i_F>0$,
	\beq
	\label{fin00}
	\sup_{B_{R/2}}|F(Du)|\le C\, i_F\, \eta_H \left(\frac{1}{i_F}\, \intmed_{B_{2R}}F(Du)\, dx\right).
	\eeq
	We next take advantage of the invariances of this estimate. For any $\lambda>0$, the function $u_\lambda(x)=\lambda\, u(x)$ is a local minimiser in $\Omega$ for the functional $J_\lambda$ having integrand 
	\[
	F_\lambda(z)=F(z/\lambda).
	\]
In particular, \eqref{fin00} holds true also for such $F_{\lambda}, u_{\lambda}$, that is,
	\beq
	\label{fin001}
	\sup_{B_{R/2}}|F_{\lambda}(Du_{\lambda})|\le C\, i_{F_{\lambda}}\, \eta_H \left(\frac{1}{i_{F_{\lambda}}}\, \intmed_{B_{2R}}F_{\lambda}(Du_{\lambda})\, dx\right).
	\eeq
	Notice that $F_\lambda$ is still $H$-q.\,u.\,c.\,and  that
\beq
\label{iflambda}
i_{F_{\lambda}}=\frac{1}{ \lambda}\, \inf_{|z|=1}\left|DF\Big(\frac{z}{\lambda}\Big)\right|.
\eeq
For any $z, w$ of unit norm, the distortion estimate \eqref{quasisymmetry} applied first to the vectors $z/\lambda, w$ and then to $w, z/\lambda$ gives
\[
\left|DF\Big(\frac{z}{\lambda}\Big)\right|\le C\, |DF(w)|\, \eta_H(1/\lambda) \quad \text{as well as} \quad |DF(w)|\le C\, \left|DF\Big(\frac{z}{\lambda}\Big)\right|\, \eta_H(\lambda),
\]
respectively, so that 
\[
\frac{i_F}{C\, \eta_H(\lambda)}\le \left|DF\Big(\frac{z}{\lambda}\Big)\right|\le C\, i_F \, \eta_H(1/\lambda)
\]
for all $|z|=1$.  Taking the infimum for such $z$'s and using \eqref{iflambda} give
\[
\frac{i_F}{C\, \eta_H(\lambda)\, \lambda}\le i_{F_\lambda}\le C\, i_F \, \frac{\eta_H(1/\lambda)}{\lambda}.
\]
Since $DF$ is $H^{N-1}$-quasiconformal, it is also locally $1/H$-H\"older continuous and so is the map $t\mapsto |DF(t\, z)|$. It follows that  $\lambda\mapsto i_{F_\lambda}$ is continuous on $]0, +\infty[$, being an infimum of equicontinuous functions.  Moreover, it fulfils
\[
\begin{split}
\lim_{\lambda\downarrow 0} i_{F_\lambda}&\ge  \frac{ i_F}{C}\, \lim_{\lambda\downarrow 0} \frac{1}{\eta_H(\lambda)\, \lambda}=\infty \quad \text{as well as} \quad
\lim_{\lambda\uparrow \infty} i_{F_\lambda}\le  C\, i_F\, \lim_{\lambda\uparrow \infty}  \frac{\eta_H(1/\lambda)}{\lambda}=0.
\end{split}
\]
Thanks to the intermediate value theorem we can then choose $\lambda$ so that 
\[
\frac{1}{i_{F_{\lambda}}} \, \intmed_{B_{2R}} F_{\lambda}(Du_{\lambda})\, dx=\frac{1}{i_{F_{\lambda}}} \, \intmed_{B_{2R}} F(Du)\, dx=1.
\]
For such  $\lambda$ and corresponding $u_\lambda$, $F_\lambda$, \eqref{fin001} becomes \eqref{stimafinale}, since
\[
 \sup_{B_{R/2}} F_\lambda(Du_\lambda)=\sup_{B_{R/2}} F(Du) \quad \text{as well as} \quad 
 i_{F_\lambda}=\intmed_{B_{2R}} F(Du)\, dx,
 \]
 and  
 \[
 \eta_H\left(\frac{1}{i_{F_{\lambda}}} \, \intmed_{B_{2R}} F_{\lambda}(Du_{\lambda})\, dx\right)=\eta_H(1)=1.
 \]
This gives us the conclusion in the smooth case.

To remove the smoothness assumption on $F$ and $u$, let $F_n$ and $u_n$ be the regularised approximations obtained thanks to Lemma \ref{regularized}. Then we can apply the same argument as before to $F_n, u_n$, thus obtaining
	\beq
	\label{stimafinale1}
	\sup_{B_{R/2}} |F_n(Du_n)| \le C\,  \intmed_{B_{2R}} F_n(Du_n) \; dx.
	\eeq
Now observe that \eqref{prosobe} allows to assume   (up to subsequences) that $DF_n(Du_n)\to DF(Du)$ pointwise a.\,e. in $B_{2R}$, while Proposition \ref{proqu}-{\em (4)} ensures that $DF_n^{-1}\to DF^{-1}$ locally uniformly, so that both $Du_n\to Du$ and $F(Du_n)\to F(Du)$, pointwise a.\,e. in $B_{R}$. This observation and the first estimate in \eqref{prosobe} allow to pass to the limit in \eqref{stimafinale1}, thus obtaining again the conclusion.
 \end{proof}%


\begin{thebibliography}{10}

\bibitem{AVV}
{\sc J. D. Anderson, M. K. Vamanamurthy and M. Vuorinen}, Conformal invariants, quasiconformal maps, and special functions. In {\em Quasiconformal space mappings}, 1--19, Lecture Notes in Math., {\bf 1508}, Springer, Berlin, 1992.

\bibitem{ADHRS}
{\sc A. Arroyo-Rabasa, G. De Philippis, J. Hirsch, F. Rindler and A. Skorobogatova},
Higher integrability for measures satisfying a PDE constraint, arXiv preprint 2106.03077.

\bibitem{AIM}
{\sc K. Astala, T. Iwaniec and G. J. Martin}, Monotone maps of $\R^N$ are quasiconformal. {\em Methods Appl. Anal.} {\bf 15} (2008), 31--37.


\bibitem{AIMbook}
{\sc K. Astala, T. Iwaniec and G. J. Martin}, {\em Elliptic Partial Differential Equations and Quasiconformal Mappings in the Plane}, Princeton Mathematical Series {\bf 48}, Princeton University Press, Princeton and Oxford, 2009.

\bibitem{B}
{\sc J. M. Ball}, Convexity conditions and existence theorems in nonlinear Elasticity, {\em Arch. Rational Mech. Anal.} {\bf 63} (1977), 337--403.

\bibitem{BM}
{\sc L. Beck, G. Mingione}, Lipschitz Bounds and Nonuniform Ellipticity, {\em Commun. Pure Appl. Math.} {\bf 73} (2020), 944--1034.


\bibitem{BHS}
{\sc M. Bonk, J. Heinonen and E. Saksman}, The quasiconformal Jacobian problem, in {\em In the tradition of Ahlfors and Bers, III}, Contemp. Math. {\bf 355}, 77--96,  Amer. Math. Soc., Providence, RI, 2004.

\bibitem{BB}
{\sc P. Bousquet and L. Brasco}, Global Lipschitz continuity for minima of degenerate problems, {\em Math. Ann.} {\bf 366} (2016), 1403--1450.

\bibitem{BBL}
{\sc   P. Bousquet, L. Brasco and C. Leone}, Singular orthotropic functionals with nonstandard growth conditions, arXiv preprint 2211.07308.

\bibitem{BMT}
{\sc P. Bousquet, C. Mariconda and G. Treu}, On the Lavrentiev phenomenon for multiple integral scalar variational problems, {\em J. Funct. Anal.} {\bf 226} (2014), 5921--5954.

\bibitem{CFV}
{\sc M. Cozzi, A. Farina and E. Valdinoci}, Gradient bounds and rigidity results for singular, degenerate, anisotropic partial differential equations, {\em Comm. Math. Phys.} {\bf 331} (2014), 189--214.

\bibitem{CM}
{\sc A. Cianchi and V. Maz'ya}, 
Global gradient estimates in elliptic problems under minimal data and domain regularity, {\em Commun. Pure Appl. Anal.} {\bf 14} (2015), 285--311.

\bibitem{CM1}
{\sc A. Cianchi and V. Maz'ya}, Global Lipschitz regularity for a class of quasilinear elliptic equations, {\em Comm. Partial Differential Equations} {\bf  36} (2011), 100--133.

\bibitem{CM2}
{\sc A. Cianchi and  V. Maz'ya}, Global boundedness of the gradient for a class of nonlinear elliptic systems, {\em Arch. Ration. Mech. Anal.} {\bf 212} (2014), 129--177.


\bibitem{DS}
{\sc C. De Lellis and L. Sz\'ekelyhidi}, The Euler equations as a differential inclusion,
{\em Ann. Math.} {\bf 170} (2009), 1417--1436.

\bibitem{DG}
{\sc M. DeGiovanni and M. Marzocchi}, On the Euler-Lagrange equation for functionals of the Calculus of Variations without upper growth
conditions, {\em SIAM J. Control Optim.} {\bf 48} (2009), 2857--2870.


\bibitem{D}
{\sc J. J. Dijkstra}, On homeomorphism groups and the compact-open topology, {\em Amer. Math. Monthly} {\bf 112} (2005), 910--912. 

\bibitem{DP}
{\sc R. J. DiPerna}, Compensated compactness and general systems of conservation laws, 
{\em Trans. Amer. Math. Soc.} {\bf 292}, (1985), 383--420.

\bibitem{DT}
{\sc D. T. Donaldson and N. S. Trudinger}, Orlicz-Sobolev spaces and embedding theorems, {\em J.
Funct. Anal.} {\bf 8} (1971), 52--75.

\bibitem{Gia}
{\sc M. Giaquinta}, {\em Multiple Integrals in the Calculus of Variations and Nonlinear Elliptic Systems}, Annals of Mathematical Studies, Princeton University Press, Princeton, 1983.


\bibitem{Gi}
{\sc M. Giaquinta}, Growth conditions and regularity. A counterexample, {\em Manuscr. Math.} {\bf 59} (1987), 245--248.


\bibitem{Giusti}
{\sc E. Giusti}, {\em Direct Methods in the Calculus of Variations}, World Scientific Publishing Company, Singapore, 2003.

\bibitem{GM}
{\sc U. Guarnotta and S. Mosconi}, A general notion of uniform ellipticity and the regularity of the stress field for elliptic equations in divergence form. To appear in {\em Anal. PDE} (2023). 

\bibitem{GRS}
{\sc A. Guerra, B. Rai\c{t}\u{a} and M. Schrecker}, Compensation phenomena for concentration effects via nonlinear elliptic estimates, arXiv preprint 2112.10657.

\bibitem{HS}
{\sc P. Hartman and G. Stampacchia}, On some non-linear elliptic differential-functional equations, {\em Acta Math.} {\bf 115}
(1966), 271--310.

\bibitem{I1}
{\sc T. Iwaniec}, The failure of lower semicontinuity for the linear dilatation, {\em Bull. London Math. Soc.} {\bf 30} (1998), 55--61.

\bibitem{IM}
{\sc T. Iwaniec and J. J. Manfredi}, The regulairity of $p$-harmonic functions on the plane, {\em Rev. Math. Iberoam.} {\bf 5} (1989), 1--19.

\bibitem{KMS}
{\sc B. Kirchheim, S. M\"uller and V. \v{S}ver\'ak}, Studying nonlinear PDE by geometry
in matrix space. In {\em Geometric analysis and Nonlinear partial differential equations},
S. Hildebrandt and H. Karcher, Eds. Springer-Verlag, 2003, 347--395.

\bibitem{K}
{\sc L. V. Kovalev}, Quasiconformal geometry of monotone mappings, {\em J. London Math. Soc.} {\bf 75} (2007), 391--408.

\bibitem{KoMa}
{\sc L. V. Kovalev and D. Maldonado}, Mappings with convex potentials and the quasiconformal Jacobian
problem, {\em Illinois J. Math.} {\bf 49} (2005), 1039--1060.

\bibitem{Leoni}
{\sc G. Leoni}, {\em A first Course in Sobolev Spaces}, 2nd edition, Graduate Studies in Mathematics {\bf 181}, American Mathematical Society, Providence, Rhode Island, 2017.

\bibitem{L}
{\sc G. Lieberman}, The natural generalization of the natural conditions of Ladyzhenskaya and UralÄŸtseva for elliptic equations, {\em Comm. Partial Differential Equations} {\bf 16} (1991), 311--361.

\bibitem{Ma0}
{\sc P. Marcellini}, Un esemple de solution discontinue d'un probl\'eme variationnel dans le case scalaire, preprint Ist. Mat. U. Dini no. 11, Firenze, 1987.   

\bibitem{MS}
{\sc S. M\"uller and V. \v{S}ver\'ak}, Convex integration with constraints and applications to phase transitions and partial differential equations, {\em J. Eur. Math. Soc.} {\bf 1} (1999), 393--422. 

\bibitem{R}
{\sc R. T.  Rockafellar}, Second-order convex analysis. {\em J. Nonlinear Convex Anal.} {\bf 1} (2000), 1--16.

\bibitem{S}
{\sc G. Stampacchia}, On some regular multiple integral problems in the calculus of variations, {\em Comm. Pure Appl.
Math.}, {\bf 16} (1963), 383--421.

\bibitem{T}
{\sc L. Tartar}, Compensated compactness and applications to partial differential equations. In {\em Nonlinear analysis and mechanics: Heriot-Watt Symposium, Vol. IV}, {\bf 39}
of Res. Notes in Math. Pitman, Boston, Mass., 1979, 136--212.

\bibitem{V}
{\sc J. V\"ais\"al\"a}, {\em Lectures on n-dimensional quasiconformal mappings}, Springer-Verlag, Berlin, 1971.

\bibitem{VS}
{\sc J. Van Schaftingen}, Limiting Sobolev inequalities for vector fields and canceling linear differential operators, {\em J. Eur. Math. Soc.} (JEMS) {\bf 15} (2013), 877--
921. 


\bibitem{W}
{\sc G. S. Watson},  Serial correlation in regression analysis. {\em I. Biometrika} {\bf 42} (1955), 327--341.

\end{thebibliography}
\end{document}